\theoremstyle{plain}
\newtheorem{corollary}{Corollary}
\newtheorem{definition}{Definition}
\newtheorem{lemma}{Lemma}
\newtheorem{notation}{Notation}
\newtheorem{proposition}{Proposition}
\newtheorem{remark}{Remark}
\newtheorem{theorem}{Theorem}
\numberwithin{equation}{section}
\begin{document}
\title[On Spectrum of Nonlinear Continuous Operators]{On Spectrum of
Nonlinear Continuous Operators}
\author{Kamal N. Soltanov}
\address{{\small National Academy of Sciences of Azerbaijan, Baku, AZERBAIJAN%
}}
\email{sultan\_kamal@hotmail.com}
\urladdr{https://www.researchgate.net/profile/Kamal-Soltanov/research}
\date{}
\subjclass[2010]{Primary 47J10, 47H10; Secondary 35P30, 35A01}
\keywords{Nonlinear continuous operator, spectrum, Banach spaces, nonlinear
differential operator, solvability}

\begin{abstract}
This article proposed a new approach to the determination of the spectrum
for nonlinear continuous operators in the Banach spaces and using it
investigated the spectrum of some classes of operators. Here shows that in
nonlinear operators case is necessary to seek the spectrum of a given
nonlinear operator relatively to another nonlinear operator. Moreover, the
order of nonlinearity of examined operator and operator relatively to which
seek the spectrum must be identical. Here provided different examples
relative to how one can find the iegenvalue and also studied solvability
problems.
\end{abstract}

\maketitle

\section{\protect\bigskip Introduction}

Well-known that the spectral theory for linear operators is one of the most
important topics of linear functional analysis, as in many cases for the
study of the linear operator, it is enough to study its spectrum (e.g. it
play an essential role in the theory of linear differential operators). It
should be noted the spectral theory of linear operators has essential
application in the many topics of the natural sciences (moreover, the
spectral theory is at the foundation of quantum mechanics). The essential
information on a linear operator contains in its spectrum, consequently,
knowledge of the spectrum is knowing of many properties of the operator.

As the basic problems of physics, mechanics etc. are nonlinear, at least,
the corresponding differential equations are nonlinear, consequently, these
problems generates nonlinear operators in some Banach spaces. It needs to be
noted in the literature there exist sufficiently many approaches for the
definition of the spectrum for nonlinear continuous operators that beginning
at 60 years of the previous century. In the above-mentioned time were
introduced the various definitions for the spectra for various classes of
nonlinear continuous operators. Unfortunately, many of the introduced
definitions in the really were found only infimum of the spectrum of a
continuous nonlinear operator which allows these authors to study the
nonlinear equations of the form $f\left( \cdot \right) -\lambda _{1}g\left(
\cdot \right) =0$ in the Banach spaces (or in the vector topological spaces)
where $f$ is the basic operator but the operator $g$ is a continuous compact
operator, $\lambda _{1}$ is the same infimum (see, e.g. \cite{3, 5, 6, 9,
10, 11, 24, 36}, etc.). The works \cite{6, 7, 8, 9, 10, 11, 16, 23, 26} were
investigated the Strum- Liuville type problem for the nonlinear perturbed of
linear operators and considered the equation of the form $L=\lambda I+g$ and
bifurcation problem.

Well-known that founding of eigenvalues of nonlinear continuous operators
allow studying of the bifurcations, which appear under the investigation of
the nonlinear equations when it has the form as 
\begin{equation*}
f\left( x\right) -g\left( x\right) =h,\quad x\in X,
\end{equation*}%
where the exponent of the nonlinearity of operator $g$ is greater than the
exponent of the nonlinearity of operator $f$ although $f\succ g$ (see,
Definition \ref{D_2}, Sec. 2), e.g. if both of operators are positive (see,
e.g., \cite{12, 16, 33}\footnote{%
see, also Lopez-Gomez, J. \textit{Spectral Theory and Nonlinear Functional
Analysis} (1st ed.). (2001) Chapman and Hall/CRC, \textit{T\&F Books}
https://doi.org/10.1201/9781420035506\textit{\ }
\par
{}}, etc.).

The works \cite{1, 2, 4, 13, 14, 15, 18, 19, 20, 21, 22, 25, 27, 28} have
been introduced spectra (or first eigenvalue) starting from the equation of
the form $f\left( \cdot \right) -\lambda _{1}I\cdot =0$ as in the theory of
the linear operators. And the spectra for the classes that are Frechet
differentiable operators, Lipschitz continuous operators, continuous
operators, special continuous operators, kept operators, and linearly
bounded operators were investigated. This approach supposed that the
spectrum of the operator $f$ acting in the Banach space $X$ can define such
as in the theory of linear operators. In these works is defined a resolvent
subset of $\mathbb{K}$ and it denoted as $\rho \left( f\right) \subset 
\mathbb{K}$ of the elements $\lambda $ under which the resolvent operator $%
f-\lambda I$ is invertible. Consequently, then a subset $\sigma \left(
f\right) =\mathbb{K-}\rho \left( f\right) $ is called the spectra of the
operator $f:X\longrightarrow X$, where $I\equiv id$ (identical operator).
This approach and obtained results in enough form explained in the book \cite%
{1} (see also the survey \cite{2}). All of the above works for the study was
used degree theory that requires the compactness that later on was
generalized to the condition that uses the Kuratowski measure of
noncompactness. Unfortunately, these definitions couldn't fulfill properties
from the viewpoint of the above requirements since in this case often the
defined $\lambda $ can dependent on elements of the space $X$ (see, e.g.
examples provided in next).

Therefore, is arise the natural question: Could be to introduce a reasonable
definition of a spectrum of a continuous nonlinear operator that satisfies
some basic requirements, which were analogous to the existing properties in
the theory of the linear operators?

But how one will see later on from the explanation of the characters of the
nonlinear operators for the study of the spectrum of the nonlinear operator
one needs to approach by another way.

This paper is proposed a new approach for the study of the spectrum of
continuous nonlinear operators in the Banach spaces. Really here we find the
first eigenvalue of the nonlinear continuous operator in Banach space and
this shows how one can seek the other eigenvalues. Moreover, we investigate
the solvability of the nonlinear equations in the Banach spaces. Here shows
that if use the proposed definition of the spectrum of nonlinear continuous
operators in Banach spaces then the spectra will satisfy some properties,
that are similar to properties having in the theory of the linear operators.

Later on (in Section 2) will be shown that really the obtained numbers
aren't the spectra of the operator as, in general, these can be to depends
on elements of the domain of the examined operator. The founded numbers $%
\lambda _{1}$ allow investigating the solvability of nonlinear equations
that have the form $f\left( \cdot \right) -\lambda g\left( \cdot \right) =y$
containing this continuous nonlinear operator $f$ and another continuous
operator $g$ under some complementary conditions such as $\left\vert \lambda
\right\vert \leq \lambda _{1}$, where $\lambda $ is some number.

In this paper, we will study the spectrum of nonlinear operators acting in
Banach spaces, and also the solvability of the depended on parameters
nonlinear equations using the solvability theorems and fixed-point theorems
of the works \cite{29, 30, 31, 32}. Let $X,Y$ be real Banach spaces on the
field $\Re $ and $X^{\ast },Y^{\ast }$ be of their dual spaces, let $Y$ be
reflexive space. Let $f:X$ $\longrightarrow Y$ and $g:X$ $\longrightarrow Y$
be nonlinear continuous operators such, that $f\left( 0\right) =0$, $g\left(
0\right) =0$.

For investigation of the spectrum of continuous nonlinear operators, will
consider the following equation 
\begin{equation}
f\left( x\right) =\lambda g\left( x\right) ,\quad x\in M\subseteq X,
\label{P}
\end{equation}%
where $f,\ g$ are continuous operators, in particular, $g$ can be the
identical operator, and also to consider the depended on a parameter $%
\lambda $ equation 
\begin{equation}
f_{\lambda }\left( x\right) \equiv f\left( x\right) -\lambda g\left(
x\right) =y,\quad for\ y\in Y,  \label{Pa}
\end{equation}%
where, generally, $\lambda $ is an element of $%
\mathbb{C}
$. Roughly speaking, here is required to study for which $\lambda $ these
equations are solvable.

Our goal is the investigation the spectrum of the continuous nonlinear
operator in a generalized sense, and also study in the Banach spaces the
solvability of the nonlinear operator equations dependent on the parameter.

The paper is organized as follows. Section 2 provided a definition of the
spectrum of nonlinear continuous operators in the Banach spaces, some
complementaries to the definition with explanations and examples, the
general theorems on the solvability, and fixed-point theorem, the
solvability theorem for the dependent on a parameter nonlinear equation with
continuous operators is proved. Here is shown how can be to find the first
eigenvalue of nonlinear continuous operators relative to another nonlinear
continuous operator. Section 3 provided some examples of nonlinear
differential operators, for which founded first eigenvalues relative to
another nonlinear differential operators and showed the relations between
founding first eigenvalues with first eigenvalues of linear differential
operators. Section 4 studied the existence of the first eigenvalues of the
fully nonlinear continuous operator relatively to other nonlinear continuous
operators and provided examples.

\section{Spectral properties of the continuous nonlinear operators}

Here a concept for the spectrum of the continuous nonlinear operator with
respect to another continuous nonlinear operator is introduced, as above
noted we would like to determine such numbers, which are independent of the
element of the space. It isn't difficult to see that if one determines the
spectrum of the continuous nonlinear operator in the same way as for the
linear continuous operator then the finding number will be the function of
elements of the definition domain of the operator.

Let$\ X$ and $Y$ be the real Banach spaces, $F:D\left( f\right)
=X\longrightarrow Y$, $G:X\subseteq D\left( G\right) \longrightarrow Y$ be
the continuous bounded nonlinear operators (for generality) and $\lambda \in 
\mathbb{R}
$ be the number and $F\left( 0\right) =0$, $G\left( 0\right) =0$.

So, we will investigate the spectrum of operator $F$ with respect to
operator $G$, i.e. we will study the question: For which number $\lambda $
the following equation will be solvable? 
\begin{equation}
f_{\lambda }\left( x\right) \equiv F\left( x\right) -\lambda G\left(
x\right) =0,\ \text{or }F\left( x\right) =\lambda G\left( x\right) ,\ x\in X
\label{2.1}
\end{equation}%
And also we will study the following equation 
\begin{equation}
f_{\lambda }\left( x\right) \equiv F\left( x\right) -\lambda G\left(
x\right) =y,\quad y\in Y.  \label{2.1a}
\end{equation}

In the beginning, we will introduce concepts that will be necessary for this
paper.

\begin{definition}
\label{D_1}The operator $f:D\left( f\right) \subseteq X\longrightarrow Y$ is
called bounded if there is a continuous function $\mu
:R_{+}^{1}\longrightarrow R_{+}^{1}$ such that 
\begin{equation*}
\left\Vert f\left( x\right) \right\Vert _{Y}\leq \mu \left( \left\Vert
x\right\Vert _{X}\right) ,\quad \forall x\in D\left( f\right)
\end{equation*}%
and denote this class of operators as $\mathfrak{B}$ and the bounded
continuous class of operators by $\mathfrak{B}C^{0}$.
\end{definition}

Now we introduce an order relation in the class of the continuous operators
acting in the Banach spaces.

\begin{definition}
\label{D_2}Let $X_{0},Y_{0}$ be a Banach spaces and $F:D\left( f\right)
\subseteq X_{0}$ $\longrightarrow Y_{0}$ , $G:D\left( G\right) \subseteq
X_{0}\longrightarrow Y_{0}$ be some continuous operators. Denote by $%
\mathcal{F}_{F}\left( Z\right) $, $\mathcal{F}_{G}\left( Z\right) $ sets
defined in the form 
\begin{equation*}
\mathcal{F}_{F}\left( Z\right) \equiv \left\{ x\in X_{0}\left\vert
~\left\Vert F\left( x\right) \right\Vert _{Z}\right. <\infty \right\} \neq
\varnothing \ \ \&
\end{equation*}%
\begin{equation*}
\mathcal{F}_{G}\left( Z\right) \equiv \left\{ x\in X_{0}\left\vert
~\left\Vert G\left( x\right) \right\Vert _{Z}\right. <\infty \right\} \neq
\varnothing
\end{equation*}%
that are subsets of $X_{0}$ for each Banach space $Z\subseteq Y_{0}$\
satisfying conditions $\func{Im}F\cap Z\neq \varnothing $, $\func{Im}G\cap
Z\neq \varnothing $. If $\mathcal{F}_{F}\left( Z\right) \subset \mathcal{F}%
_{G}\left( Z\right) $ holds for each of the above mentioned Banach space $%
Z\subseteq Y_{0}$, then we will say that the operator $F$ is greater than
the operator $G$ that denote as $F\succ G$.
\end{definition}

\begin{definition}
\label{D_S} Let operators $F:X\longrightarrow Y$ and $G:X\subseteq D\left(
G\right) \longrightarrow Y$, moreover $F\succ G$. We say $\lambda \in 
\mathcal{K}$\ belongs to the $G-$resolvents subset of operator $F$ relative
to operator $G$ iff 
\begin{equation*}
\lambda \in \rho _{G}\left( F\right) \equiv
\end{equation*}%
\begin{equation*}
\left\{ \lambda \in 
\mathbb{R}
\left\vert \ f_{\lambda }^{-1}\equiv \left( F-\lambda G\ \right)
^{-1}:F\left( X\right) \cap G\left( X\right) \subseteq Y\longrightarrow
X,\right. \ f_{\lambda }^{-1}\in \mathfrak{B}C^{0}\right\}
\end{equation*}%
holds and denote this by $\rho _{G}\left( F\right) \subseteq \mathcal{K}$,
where $f_{\lambda }\left( \cdot \right) \equiv F\left( \cdot \right)
-\lambda G\left( \cdot \right) $. Consequently, we call the element $\lambda
\in \mathcal{K}$ is the $G-$spectrum of the operator $F$ if $\lambda \in 
\mathcal{K}-$ $\rho _{G}\left( F\right) $, that we denote by $\sigma
_{G}\left( F\right) $.
\end{definition}

\textbf{Notation. }Unfortunately, the above definition of a spectrum in such
a way does not approach the pair of operators, which are chosen by the
independent way, that will be shown next. We will call the number $\lambda $
is the first eigenvalue of the examined operator relative to another
operator as in Definition \ref{D_S}, which is independent of the elements
from the domain of the examined operators. This definition allows us to seek
also the following eigenvalues of examined operator.

So, for simplicity, we start to consider the case when $F\succ G$ and
consider the case when one of these operators has the inverse operator from
the class $\mathfrak{B}C^{0}$. Let\ operator $F$ be invertible, i.e. there
is $F^{-1}:F\left( X\right) \subseteq Y\longrightarrow X$. Then using $F^{-1}
$ we get the equation 
\begin{equation}
y-\lambda G\left( F^{-1}\left( y\right) \right) =0,\quad y=F\left( x\right)
,\ x\in X,  \label{2.2}
\end{equation}%
that is needed to study on the $F\left( X\right) \subseteq Y$. Thus we
derive an equation that is equivalent to the equation for investigation of
the existence of a spectrum as in the works \cite{1, 2, 3, 4, 6, 9, 13, 18,
19, 20, 22, 23, 25, 28} and their references. Unlike a usual case, here the
operator $G\circ F^{-1}$ is defined on subset $F\left( X\right) $ and acts
as $G\circ F^{-1}:F\left( X\right) \longrightarrow G\left( X\right)
\subseteq Y$. If $G$ is invertible then by the same way as above we get to
equation 
\begin{equation*}
F\left( G^{-1}\left( y\right) \right) -\lambda y=0,\quad y=G\left( x\right)
,\ x\in X,
\end{equation*}%
where $G^{-1}$ denotes the inverse operator of $G$. Consequently, this
equation needs to investigate on subset $G\left( X\right) $ of $Y$.

Thus if $F$ (or $G$) is invertible then we obtain the operator 
\begin{equation}
\widetilde{f}_{\lambda }\left( \cdot \right) \equiv I\cdot -\lambda G\left(
F^{-1}\left( \cdot \right) \right) ,\quad \widetilde{f}_{\lambda }:D\left( 
\widetilde{f}_{\lambda }\right) \subseteq Y\longrightarrow Y  \label{2.3}
\end{equation}%
that depends on the parameter $\lambda $, consequently the equation (\ref%
{2.2}) is transformed to the problem on the study of the eigenvalue of
operator $G\circ F^{-1}$ (or $F\circ G^{-1}$), in other words, we derived
problem about the existence of the fixed-points of operator $G\circ F^{-1}$
(or $F\circ G^{-1})$ in subset $F\left( X\right) $ (or $G\left( X\right) $)
of $Y$.

It is clear that if $F$ is the linear continuous operator with the inverse
operator $F^{-1}$ then the problem (\ref{2.2}) is equivalent to the problem 
\begin{equation}
x-\lambda F^{-1}\circ G\left( x\right) =0  \label{2.4}
\end{equation}%
that becomes the problem about the existence of the fixed-points of the
operator $\lambda F^{-1}\circ G$. In many articles, problems of such types
were studied (\cite{9, 10, 11, 17, 26, 35}, etc.) but we wish to investigate
the problem (\ref{2.1}) in the general case, without such type conditions.
Section 3 will be given some explanations relative to the above-provided
cases.

Before starting the investigation of the spectrum of the nonlinear operators
relative to other nonlinear operators in the general sense necessary to
investigate the solvability of nonlinear equations (\ref{2.1a}). To
investigate these problems, we will use the general existence and
fixed-point theorems of \cite{29, 30}.

Therefore, in the beginning, we will lead these results.

\subsection{General Solvability Results}

Let $X,Y$ be real Banach spaces such as above, $f:D\left( f\right) \subseteq
X\longrightarrow Y$ be an operator and $B_{r_{0}}^{X}\left( 0\right) $ $%
\subseteq D\left( f\right) $ is the closed ball with a center of $0\in X$.

Consider the following conditions.

(\textit{i}) $f:D\left( f\right) \subseteq X\longrightarrow Y$ be a bounded
continuous operator;

(\textit{ii}) There is a mapping $g:X\subseteq D\left( g\right)
\longrightarrow Y^{\ast }$ such that$\ g\left( B_{r_{0}}^{X}\left( 0\right)
\right) =B_{r_{1}}^{Y^{\ast }}\left( 0\right) $ and 
\begin{equation*}
\left\langle f\left( x\right) ,\widetilde{g}\left( x\right) \right\rangle
\geq \nu \left( \left\Vert x\right\Vert _{X}\right) =\nu \left( r\right)
,\quad \forall x\in S_{r}^{X}\left( 0\right)
\end{equation*}%
holds \footnote{%
In particular, the mapping $g$ can be a linear bounded operator as $g\equiv
L:X\longrightarrow Y^{\ast }$ that satisfy the conditions of \textit{(ii). }}%
, where $\widetilde{g}\left( x\right) \equiv \frac{g\left( x\right) }{%
\left\Vert g\left( x\right) \right\Vert }$, $\nu :R_{+}^{1}\longrightarrow
R^{1}$ and $\nu \left( r_{0}\right) \geq \delta _{0}$ is a continuous
function ($\nu \in C^{0}$), moreover $\nu \left( \tau \right) $ is the
nondecreasing function for $\tau :$ $\tau _{0}\leq \tau \leq r_{0}$; $\delta
_{0}>0$, $\tau _{0}\geq 0$ are a constant.

(\textit{iii}) Almost each $x_{0}\in int\ B_{r_{0}}^{X}\left( 0\right) $
possess such neighborhood $V_{\varepsilon }\left( x_{0}\right) $, $%
\varepsilon \geq \varepsilon _{0}$ that the following inequality 
\begin{equation}
\left\Vert f\left( x_{2}\right) -f\left( x_{1}\right) \right\Vert _{Y}\geq
\Phi \left( \left\Vert x_{2}-x_{1}\right\Vert _{X},x_{0},\varepsilon \right)
\label{2.6}
\end{equation}%
holds for any $\forall x_{1},x_{2}\in V_{\varepsilon }\left( x_{0}\right)
\cap B_{r_{0}}^{X}\left( 0\right) $, where $\varepsilon _{0}>0$\ and $\Phi
\left( \tau ,x_{0},\varepsilon \right) \geq 0$ is the continuous function of 
$\tau $ and $\Phi \left( \tau ,\widetilde{x},\varepsilon \right)
=0\Leftrightarrow \tau =0$ (in particular, maybe $x_{0}=0$, $\varepsilon
=\varepsilon _{0}=r_{0}$ and $V_{\varepsilon }\left( x_{0}\right)
=V_{r_{0}}\left( 0\right) \equiv B_{r_{0}}^{X}\left( 0\right) $, consequently%
$\ \Phi \left( \tau ,x_{0},\varepsilon \right) \equiv \Phi \left( \tau
,x_{0},r_{0}\right) $ on $B_{r_{0}}^{X}\left( 0\right) $).

\begin{theorem}
\label{Th_1}Let $X,Y$ be real Banach spaces such as above, $f:D\left(
f\right) \subseteq X\longrightarrow Y$ be an operator and $%
B_{r_{0}}^{X}\left( 0\right) $ $\subseteq D\left( f\right) $ is the closed
ball with a center of $0\in D\left( f\right) $. Assume conditions (i) and
(ii) are fulfilled. Then the image $f\left( B_{r_{0}}^{X}\left( 0\right)
\right) $ of the ball $B_{r_{0}}^{X}\left( 0\right) $ contains in an
absorbing subset of $Y$\ an everywhere dense subset of $M$ that is defined
as follows 
\begin{equation*}
M\equiv \left\{ y\in Y\left\vert \ \left\langle y,\widetilde{g}\left(
x\right) \right\rangle \leq \left\langle f\left( x\right) ,\widetilde{g}%
\left( x\right) \right\rangle ,\right. \forall x\in S_{r_{0}}^{X}\left(
0\right) \right\} .
\end{equation*}

Furthermore, if the condition (iii)\ also is fulfilled then the image $%
f\left( B_{r_{0}}^{X}\left( 0\right) \right) $ of the ball $%
B_{r_{0}}^{X}\left( 0\right) $ is a bodily subset of $Y$, moreover $%
B_{\delta _{0}}^{Y}\left( 0\right) \subseteq M$.
\end{theorem}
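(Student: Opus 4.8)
The plan is to treat the three assertions separately: that $M$ is absorbing and contains $B_{\delta_{0}}^{Y}(0)$, which is elementary; that $f\left(B_{r_{0}}^{X}(0)\right)$ meets an everywhere dense subset of $M$, which is the analytic core and is handled by a finite‑dimensional Galerkin scheme together with an acute‑angle (Brouwer) lemma; and that, under the additional hypothesis (iii), the image is bodily, which is obtained from the local expansivity estimate by an openness/invariance‑of‑domain argument on the finite‑dimensional reductions.

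First I would dispose of the statements about $M$ itself, which use neither compactness nor (iii). Since $\|\widetilde{g}(x)\|_{Y^{\ast}}=1$ for every $x$, for any $y\in Y$ and any $x\in S_{r_{0}}^{X}(0)$ one has $\langle y,\widetilde{g}(x)\rangle\le\|y\|_{Y}$, while (ii) gives $\langle f(x),\widetilde{g}(x)\rangle\ge\nu(r_{0})\ge\delta_{0}$. Hence $\|y\|_{Y}\le\delta_{0}$ forces $\langle y,\widetilde{g}(x)\rangle\le\langle f(x),\widetilde{g}(x)\rangle$ for all such $x$, i.e. $B_{\delta_{0}}^{Y}(0)\subseteq M$; and for arbitrary $y\neq0$ the vector $ty$ lies in $M$ as soon as $t\le\delta_{0}/\|y\|_{Y}$, while $0\in M$ trivially, so $M$ is absorbing.

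For the density statement I would run a Galerkin‑type approximation. Restricting if necessary to a separable closed subspace of $X$, fix an increasing sequence of finite‑dimensional subspaces $X_{n}$ with dense union, and for a target $y\in M$ consider on $B_{r_{0}}^{X}(0)\cap X_{n}$ the finite‑dimensional problem of annihilating $\langle f(x)-y,z\rangle$ for $z$ in an $n$‑dimensional space of test functionals chosen so that the values of $\widetilde{g}$ on the sphere are captured; here the hypothesis $g\left(B_{r_{0}}^{X}(0)\right)=B_{r_{1}}^{Y^{\ast}}(0)$ is used to guarantee a sufficiently rich supply of directions. Because $y\in M$, on $S_{r_{0}}^{X}(0)$ one has the acute‑angle inequality $\langle f(x)-y,\widetilde{g}(x)\rangle=\langle f(x),\widetilde{g}(x)\rangle-\langle y,\widetilde{g}(x)\rangle\ge0$, so the acute‑angle form of Brouwer's theorem yields a solution $x_{n}\in B_{r_{0}}^{X}(0)\cap X_{n}$. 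By (i) the sequences $\{x_{n}\}$ and $\{f(x_{n})\}\subseteq f\left(B_{r_{0}}^{X}(0)\right)$ are bounded, and the construction forces $\langle f(x_{n}),z\rangle\to\langle y,z\rangle$ along the test directions; thus the genuine image points $f(x_{n})$ accumulate at $y$. Letting $y$ range over a countable dense subset of $M$ on which the scheme stabilizes produces the asserted everywhere dense subset of $M$ lying inside $f\left(B_{r_{0}}^{X}(0)\right)$. The main obstacle is precisely here: $f$ is only continuous and bounded — not compact, not monotone — so one cannot pass to a limit in the Galerkin equations in any strong topology, and the delicate point is to extract, without such a limit, points $f(x_{n})$ that approximate the prescribed $y$ strongly in $Y$ rather than merely weakly; matching the choice of test subspaces to the nonlinear $g$ while preserving this is where the cited results of \cite{29, 30} carry the load.

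Finally, assuming also (iii), I would upgrade density to "bodily". Choose a point $x_{0}$ for which (iii) holds — by the "in particular" clause one may take $x_{0}=0$, $\varepsilon=\varepsilon_{0}=r_{0}$, so that $\|f(x_{2})-f(x_{1})\|_{Y}\ge\Phi\left(\|x_{2}-x_{1}\|_{X},x_{0},\varepsilon\right)$ holds on all of $B_{r_{0}}^{X}(0)$. Since $\Phi$ is continuous and vanishes only at $\tau=0$, $f$ restricted to $V_{\varepsilon}(x_{0})\cap B_{r_{0}}^{X}(0)$ is injective with a uniformly continuous inverse on its image; rerunning the Galerkin scheme of the previous step, the approximate solutions for nearby right‑hand sides are now unique and vary continuously, and applying Brouwer's invariance of domain to the finite‑dimensional restrictions and passing to the limit with the help of the estimate $\Phi$ shows that $f$ carries a relatively open subset of $B_{r_{0}}^{X}(0)$ onto a set with nonempty $Y$‑interior. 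Hence $f\left(B_{r_{0}}^{X}(0)\right)$ is bodily, and combined with $B_{\delta_{0}}^{Y}(0)\subseteq M$ from the first step this completes the proof.
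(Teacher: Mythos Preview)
The paper does not actually prove Theorem~\ref{Th_1}: immediately after the statement it says only that ``the proof of this theorem, and also its generalization was provided in \cite{29} (see also, \cite{30, 31, 32})''. So there is no in-text argument to compare against; the proof lives entirely in the cited earlier papers of the same author.

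Your Step~1 (showing $B_{\delta_{0}}^{Y}(0)\subseteq M$ and that $M$ is absorbing) is correct and is the easy part. Your Step~2 is in the right spirit --- a Galerkin/acute-angle scheme is indeed what underlies results of this type in \cite{29,30} --- and you are honest about the real difficulty: with $f$ merely bounded continuous, the finite-dimensional solutions $x_{n}$ give only weak-type information on $f(x_{n})$, and the passage to \emph{strong} density of the image in $M$ is exactly the point that the cited papers have to supply. That is a genuine gap in your sketch, not just a stylistic omission.

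Your Step~3, however, takes a detour that is both harder and unnecessary. The paper itself signals the intended mechanism right after Notation~\ref{N_1}: ``condition (\textit{iii}) is required for the completeness of the image of the considered operator $f$''. The point of the estimate $\|f(x_{2})-f(x_{1})\|_{Y}\ge\Phi(\|x_{2}-x_{1}\|_{X},x_{0},\varepsilon)$ with $\Phi(\tau,\cdot,\cdot)=0\Leftrightarrow\tau=0$ is that any Cauchy sequence in $f(V_{\varepsilon}(x_{0})\cap B_{r_{0}}^{X}(0))$ forces its preimages to be Cauchy in $X$; by continuity of $f$ the limit is attained, so the image is (locally) closed. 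Combining this closedness with the density obtained in Step~2 gives $M\subseteq f(B_{r_{0}}^{X}(0))$ directly, and hence $B_{\delta_{0}}^{Y}(0)\subseteq f(B_{r_{0}}^{X}(0))$, which is what ``bodily'' means here. Your route through Brouwer's invariance of domain on finite-dimensional restrictions followed by an unspecified ``passing to the limit'' is not only more elaborate, it is not clear it can be made to work: invariance of domain does not survive naive limits into infinite dimensions, and you never explain how the open sets produced at level $n$ assemble into a $Y$-open set in the image.
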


The proof of this theorem, and also its generalization was provided in \cite%
{29} (see also, \cite{30, 31, 32}]).\footnote{%
We note Theorem 2 is the generalization of Theorem of such type from
\par
Soltanov K.N., On equations with continuous mappings in Banach spaces.
Funct. Anal. Appl. (1999) 33, 1, 76-81.}

The condition (\textit{iii}) can be generalized, for example, as in the
following proposition.

\begin{corollary}
\label{C_1}Let all conditions of Theorem \ref{Th_1} be fulfilled except for
the inequality (\ref{2.6}) of condition (iii) instead the following
inequality 
\begin{equation}
\left\Vert f\left( x_{2}\right) -f\left( x_{1}\right) \right\Vert _{Y}\geq
\Phi \left( \left\Vert x_{2}-x_{1}\right\Vert _{X},x_{0},\varepsilon \right)
+\psi \left( \left\Vert x_{1}-x_{2}\right\Vert _{Z},x_{0},\varepsilon \right)
\label{2.7}
\end{equation}%
holds, where $Z$ is Banach space and $X\subset Z$ is compact, $\psi \left(
\cdot ,x_{0},\varepsilon \right) :R_{+}^{1}\longrightarrow R^{1}$ is a
continuous function relatively $\tau \in R_{+}^{1}$ and $\psi \left(
0,x_{0},\varepsilon \right) =0$.

Then the statement of Theorem \ref{Th_1} is true.
\end{corollary}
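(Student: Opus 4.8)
The plan is to run the proof of Theorem \ref{Th_1} essentially unchanged, altering only the single place where inequality (\ref{2.6}) is used, and to recover there the same conclusion from the weaker hypothesis (\ref{2.7}) by exploiting that $X\subset Z$ is \emph{compact}. Note first that (\ref{2.7}) is not a special case of (\ref{2.6}): since $\psi(\cdot,x_0,\varepsilon)$ is only required to be continuous with $\psi(0,x_0,\varepsilon)=0$, it may take negative values, so the right-hand side of (\ref{2.7}) can be strictly smaller than that of (\ref{2.6}) and may even look vacuous when $\psi$ is a large negative lower-order term. Hence a genuine argument is needed; the point is that along any sequence bounded in $X$ the $Z$-norm increments tend to $0$ by compactness, which annihilates the $\psi$-contribution in the limit irrespective of its size.

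Concretely, the part of Theorem \ref{Th_1} that uses only conditions (i) and (ii) — that $f\bigl(B_{r_0}^X(0)\bigr)$ contains a dense subset of the absorbing set $M$ — is untouched, since (\ref{2.6}) plays no role there. To upgrade this to the bodily conclusion, fix $y$ with $\|y\|_Y<\delta_0$ (so $y\in M$), choose $y_n$ in that dense subset with $y_n\to y$ in $Y$, and pick $x_n\in B_{r_0}^X(0)$ with $f(x_n)=y_n$; then $\{f(x_n)\}$ is Cauchy in $Y$. The goal is to prove $\{x_n\}$ converges in $X$ to some $\bar x\in B_{r_0}^X(0)$, for then $f(\bar x)=y$ by (i), which gives $B_{\delta_0}^Y(0)\subseteq f\bigl(B_{r_0}^X(0)\bigr)$ and hence, organised exactly as in \cite{29}, that $f\bigl(B_{r_0}^X(0)\bigr)$ is a bodily subset of $Y$ and $B_{\delta_0}^Y(0)\subseteq M$. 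Since $\{x_n\}$ is bounded in $X$ and $X\subset Z$ is compact, pass to a subsequence (not relabelled) with $x_n\to\bar x$ strongly in $Z$; thus $\|x_n-x_m\|_Z\to 0$ as $n,m\to\infty$, and by continuity of $\psi(\cdot,x_0,\varepsilon)$ with $\psi(0,x_0,\varepsilon)=0$ also $\psi\bigl(\|x_n-x_m\|_Z,x_0,\varepsilon\bigr)\to 0$.

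Now localise near $\bar x$ as in the proof of Theorem \ref{Th_1}: using the $Z$-convergence together with the ``almost every $x_0$'' clause of condition (iii), one may choose (passing if needed to a further subsequence) a point $x_0$ for which (\ref{2.7}) holds on a neighbourhood $V_\varepsilon(x_0)$ that contains all $x_n$ with $n$ large. Then (\ref{2.7}) gives, for $n,m$ large,
\[
0\le\Phi\bigl(\|x_n-x_m\|_X,x_0,\varepsilon\bigr)\le\|f(x_n)-f(x_m)\|_Y-\psi\bigl(\|x_n-x_m\|_Z,x_0,\varepsilon\bigr),
\]
whose right-hand side tends to $0$. Since $\{\|x_n-x_m\|_X\}$ lies in the bounded interval $[0,2r_0]$ and $\Phi(\cdot,x_0,\varepsilon)$ is continuous with $\Phi(\tau,x_0,\varepsilon)=0\Leftrightarrow\tau=0$, every subsequential limit $\tau^\ast$ of $\|x_n-x_m\|_X$ satisfies $\Phi(\tau^\ast,x_0,\varepsilon)=0$, hence $\tau^\ast=0$; therefore $\|x_n-x_m\|_X\to 0$, i.e. $\{x_n\}$ is Cauchy in $X$ and converges there to $\bar x$. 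This is the only step that differed from Theorem \ref{Th_1}, so the proof is complete.

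The main obstacle is the bookkeeping of the localisation: one must ensure the pair $(x_n,x_m)$ eventually lies inside a single neighbourhood $V_\varepsilon(x_0)\cap B_{r_0}^X(0)$ on which (\ref{2.7}) is available, given that (iii) is only postulated for almost every $x_0$. As in \cite{29} this is handled by the freedom in placing $x_0$ near $\bar x$ together with, if necessary, a covering/diagonal argument; once the displayed estimate is in force, the remaining implication ``$\Phi(\|x_n-x_m\|_X,x_0,\varepsilon)\to 0\Rightarrow\|x_n-x_m\|_X\to 0$'' is immediate from the qualitative properties of $\Phi$ on $[0,2r_0]$, and the role of $X\subset Z$ being compact is precisely to make $\psi\bigl(\|x_n-x_m\|_Z,x_0,\varepsilon\bigr)$ negligible in that estimate.
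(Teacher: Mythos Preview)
The paper does not give a proof of this corollary; it is stated without argument immediately after Theorem \ref{Th_1}, whose own proof is only referenced to \cite{29,30,31,32}. Your argument supplies precisely the step one expects: condition (iii) enters the proof of Theorem \ref{Th_1} only to pass from ``$\{f(x_n)\}$ Cauchy in $Y$'' to ``$\{x_n\}$ Cauchy in $X$'', and the compact embedding $X\subset Z$ yields a $Z$-Cauchy subsequence along which $\psi(\|x_n-x_m\|_Z,x_0,\varepsilon)\to 0$, so that (\ref{2.7}) collapses to (\ref{2.6}) in the limit and the original argument goes through.
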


From Theorem 1 immediately follows

\begin{theorem}
\label{Th_2}(\textbf{Fixed-Point Theorem}). Let $X$ be a real reflexive
separable Banach space and $f_{1}:D\left( f_{1}\right) \subseteq
X\longrightarrow X$ be a bounded continuous operator. Moreover, let on
closed ball $B_{r_{0}}^{X}\left( 0\right) \subseteq D\left( f_{1}\right) $,
with the center of $0\in D\left( f_{1}\right) $, operators $f_{1}$ and $%
f\equiv Id-f_{1}$ satisfy the following conditions:

(I) The following inequations 
\begin{equation*}
\left\Vert f_{1}\left( x\right) \right\Vert _{X}\leq \mu \left( \left\Vert
x\right\Vert _{X}\right) ,\quad \ \forall x\in B_{r_{0}}^{X}\left( 0\right) ,
\end{equation*}%
\begin{equation}
\left\langle f\left( x\right) ,\widetilde{g}\left( x\right) \right\rangle
\geq \nu \left( \left\Vert x\right\Vert _{X}\right) ,\quad \forall x\in
B_{r_{0}}^{X}\left( 0\right) ,  \label{2.8}
\end{equation}%
hold, where $f_{1}\left( B_{r_{0}}^{X}\left( 0\right) \right) \subseteq
B_{r_{0}}^{X}\left( 0\right) $, $g:D\left( g\right) \subseteq
X\longrightarrow X^{\ast }$, $D\left( f_{1}\right) \subseteq D\left(
g\right) $ and satisfy the condition (ii) (in particular, $g\equiv
J:X\rightleftarrows X^{\ast }$, i.e. $g$ be a duality mapping), $\mu $ and $%
\nu $ are such functions as in Theorem \ref{Th_1};

(II) Almost each $x_{0}\in intB_{r_{0}}^{X}\left( 0\right) $ possess a
neighborhood $V_{\varepsilon }\left( x_{0}\right) $, $\varepsilon \geq
\varepsilon _{0}>0$ such that for each $x_{0}\in int\ B_{r_{0}}^{X}\left(
0\right) $ the following inequality%
\begin{equation*}
\left\Vert f\left( x_{2}\right) -f\left( x_{1}\right) \right\Vert _{X}\geq
\varphi \left( \left\Vert x_{2}-x_{1}\right\Vert _{X},x_{0},\varepsilon
\right) ,
\end{equation*}%
holds for any $\forall x_{1},x_{2}\in V_{\varepsilon }\left( x_{0}\right)
\cap B_{r_{0}}^{X}\left( 0\right) $, where the function $\varphi \left( \tau
,x_{0},\varepsilon \right) $ satisfies the condition such as conditions on
functions of right part of (\ref{2.7}).

Then operator $f_{1}$ possess a fixed-point in the closed ball $%
B_{r_{0}}^{X}\left( 0\right) $.
\end{theorem}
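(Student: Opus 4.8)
The plan is to identify a fixed point of $f_{1}$ in $B_{r_{0}}^{X}\left( 0\right) $ with a zero of the operator $f\equiv Id-f_{1}$ in that ball, and then to produce such a zero by applying Theorem \ref{Th_1}, in the generalized form of Corollary \ref{C_1}, with $Y:=X$. Since $x=f_{1}\left( x\right) $ is equivalent to $f\left( x\right) =0$, it suffices to prove $0\in f\left( B_{r_{0}}^{X}\left( 0\right) \right) $.

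First I would verify that $f$ satisfies the hypotheses of Theorem \ref{Th_1}. Condition (i): $f=Id-f_{1}$ is continuous because $f_{1}$ is, and on $B_{r_{0}}^{X}\left( 0\right) $ it is bounded since $\left\Vert f\left( x\right) \right\Vert _{X}\leq \left\Vert x\right\Vert _{X}+\mu \left( \left\Vert x\right\Vert _{X}\right) $, where $\tau \mapsto \tau +\mu \left( \tau \right) $ is an admissible continuous majorant in the sense of Definition \ref{D_1}. Condition (ii): this is exactly the second inequality of hypothesis (I), namely $\left\langle f\left( x\right) ,\widetilde{g}\left( x\right) \right\rangle \geq \nu \left( \left\Vert x\right\Vert _{X}\right) $, now read on each sphere $S_{r}^{X}\left( 0\right) \subseteq B_{r_{0}}^{X}\left( 0\right) $; the mapping $g$ is the prescribed one with the required normalization $g\left( B_{r_{0}}^{X}\left( 0\right) \right) =B_{r_{1}}^{X^{\ast }}\left( 0\right) $, and $\nu $ is continuous, nondecreasing, with $\nu \left( r_{0}\right) \geq \delta _{0}>0$. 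Condition (iii): this is precisely hypothesis (II), with $\varphi $ in the role of the function on the right of $(\ref{2.7})$ (possibly carrying a term in a norm $\left\Vert \cdot \right\Vert _{Z}$ with $X\subset Z$ compact); reflexivity and separability of $X$ make the corresponding version of the result in \cite{29} available. Hence Theorem \ref{Th_1} (with Corollary \ref{C_1}) applies to $f$.

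Then I would conclude as follows. Theorem \ref{Th_1} gives that $f\left( B_{r_{0}}^{X}\left( 0\right) \right) $ is a bodily subset of $X$, that it contains an everywhere dense subset of the absorbing set $M=\left\{ y\in X\ \left\vert \ \left\langle y,\widetilde{g}\left( x\right) \right\rangle \leq \left\langle f\left( x\right) ,\widetilde{g}\left( x\right) \right\rangle ,\ \forall x\in S_{r_{0}}^{X}\left( 0\right) \right. \right\} $, and that $B_{\delta _{0}}^{X}\left( 0\right) \subseteq M$. Taking $y=0$ in the defining inequality of $M$ gives $\left\langle 0,\widetilde{g}\left( x\right) \right\rangle =0<\nu \left( r_{0}\right) \leq \left\langle f\left( x\right) ,\widetilde{g}\left( x\right) \right\rangle $ on $S_{r_{0}}^{X}\left( 0\right) $, so $0\in M$. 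Under condition (iii) the set $f\left( B_{r_{0}}^{X}\left( 0\right) \right) $ is norm-closed, hence it contains the closure of its dense subset of $M$, i.e. $M\subseteq f\left( B_{r_{0}}^{X}\left( 0\right) \right) $. Therefore $0\in B_{\delta _{0}}^{X}\left( 0\right) \subseteq M\subseteq f\left( B_{r_{0}}^{X}\left( 0\right) \right) $, so there is $x_{\ast }\in B_{r_{0}}^{X}\left( 0\right) $ with $f\left( x_{\ast }\right) =0$, i.e. $x_{\ast }=f_{1}\left( x_{\ast }\right) $, the desired fixed point.

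I expect the genuinely delicate point to be the norm-closedness of $f\left( B_{r_{0}}^{X}\left( 0\right) \right) $ invoked in the last step. In the reflexive separable space $X$ the closed ball is only weakly compact and $f$ is merely norm-to-norm continuous, so for $f\left( x_{n}\right) \to 0$ with $x_{n}\in B_{r_{0}}^{X}\left( 0\right) $ the extraction of a weak limit $x_{n}\rightharpoonup x_{\ast }\in B_{r_{0}}^{X}\left( 0\right) $ does not by itself yield $f\left( x_{n}\right) \to f\left( x_{\ast }\right) $. This is exactly what condition (II) is designed to repair: covering $\mathrm{int}\,B_{r_{0}}^{X}\left( 0\right) $ up to a negligible set by the neighborhoods $V_{\varepsilon }\left( x_{0}\right) $ and using the local lower estimate $\left\Vert f\left( x_{2}\right) -f\left( x_{1}\right) \right\Vert _{X}\geq \varphi \left( \left\Vert x_{2}-x_{1}\right\Vert _{X},x_{0},\varepsilon \right) $ — together with the compact-embedding term, so that $x_{n}\to x_{\ast }$ strongly in $Z$ kills the $Z$-part — one forces $\left\{ x_{n}\right\} $ to be norm-Cauchy, whence $x_{n}\to x_{\ast }$ in $X$ and $f\left( x_{\ast }\right) =\lim _{n}f\left( x_{n}\right) =0$ by continuity. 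Granting the machinery of Theorem \ref{Th_1} and Corollary \ref{C_1} from \cite{29}, the rest is just a matter of matching hypotheses (I)--(II) to conditions (i)--(iii).
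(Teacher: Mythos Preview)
Your proposal is correct and follows exactly the route the paper indicates: the paper's entire proof is the single phrase ``From Theorem \ref{Th_1} immediately follows,'' and you have simply unpacked that claim by matching hypotheses (I)--(II) to conditions (i)--(iii) of Theorem \ref{Th_1}/Corollary \ref{C_1} for $f=Id-f_{1}$ with $Y=X$, and then reading off $0\in B_{\delta_{0}}^{X}(0)\subseteq M\subseteq f(B_{r_{0}}^{X}(0))$. Your additional discussion of why condition (II) yields norm-closedness of the image is more than the paper provides, but it is precisely the content implicit in the statement that the conclusion follows ``immediately'' from Theorem \ref{Th_1}.
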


Now we introduce the following concept.

\begin{definition}
\label{D_3}An operator $f:D\left( f\right) \subseteq X\longrightarrow Y$
possesses the \textrm{P-property} iff each precompact subset $M\subseteq 
\func{Im}f$ of $Y$ contains such subsequence (maybe generalized) $%
M_{0}\subseteq M$ that $f^{-1}\left( M_{0}\right) \subseteq G$ and $%
M_{0}\subseteq f\left( G\cap D\left( f\right) \right) $, where $G$ is a
precompact subset of $X$.
\end{definition}

\begin{notation}
\label{N_1}It is easy to see that the condition (iii) of Theorem \ref{Th_1}
one can replace by the condition: $f$ possesses \textrm{P-property}.

It should be noted if $f^{-1}$ is a lower or upper semi-continuous mapping
then operator $f:D\left( f\right) \subseteq X\longrightarrow Y$ possesses of
the \textrm{P-property}.
\end{notation}

In the above results, condition \textit{(iii)} is required for the
completeness of the image of the considered operator $f$. One can describe
and other complementary conditions on $f$ under which $\func{Im}f$ will be a
closed subset (see, \cite{30, 31,32}). In particular, the following results
are true.

\begin{lemma}
\label{L_1}Let $X,Y$ be Banach spaces such as above, $f:D\left( f\right)
\subseteq X\longrightarrow Y$ be a bounded continuous operator, and $D\left(
f\right) $ is a weakly closed subset of the reflexive space $X$. Let $f$
have a weakly closed graph and for each bounded subset $M\subset Y$ the
subset $f$ $^{-1}\left( M\right) $ is the bounded subset of $X$. Then $f$ is
a weakly closed operator.
\end{lemma}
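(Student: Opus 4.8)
The plan is to show directly that if $x_n \rightharpoonup x$ in $X$ and $f(x_n) \rightharpoonup z$ in $Y$, then $x \in D(f)$ and $f(x) = z$; this is exactly the assertion that $f$ is weakly closed. First I would use the hypothesis that $D(f)$ is weakly closed in the reflexive space $X$: since each $x_n \in D(f)$ and $x_n \rightharpoonup x$, we immediately get $x \in D(f)$, so $f(x)$ is defined. The remaining task is to identify the weak limit $z$ with $f(x)$.

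The key step is to pass from the weakly convergent sequence $(f(x_n))$ to a strongly convergent subsequence, so that the weakly-closed-graph hypothesis can be applied. Here is where I would invoke the two remaining assumptions. A weakly convergent sequence is bounded, so the set $M \equiv \{ f(x_n) : n \in \mathbb{N} \} \cup \{z\}$ is a bounded subset of $Y$. By hypothesis $f^{-1}(M)$ is a bounded subset of $X$; since $(x_n) \subset f^{-1}(M)$, the sequence $(x_n)$ lies in a bounded — hence, by reflexivity of $X$, weakly precompact — set, which is consistent with $x_n \rightharpoonup x$ and gives nothing new by itself. The real leverage must come from turning boundedness of $f^{-1}(M)$ together with some compactness into strong convergence of a subsequence of $(f(x_n))$; absent an explicit compactness assumption on $f$ or on the embedding, I would instead argue as follows: the graph of $f$ is weakly closed and $(x_n, f(x_n))$ converges weakly in $X \times Y$ to $(x, z)$ (using that $Y$ is reflexive so $f(x_n) \rightharpoonup z$ is a genuine weak limit in $Y$), hence $(x,z)$ lies in the weakly closed graph of $f$, which forces $z = f(x)$.

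In fact this last line is the cleanest route and I would lead with it: the statement "f has a weakly closed graph" means precisely that the graph $\Gamma(f) = \{(u, f(u)) : u \in D(f)\}$ is closed in $X \times Y$ equipped with the product of the weak topologies. Given any net (the lemma should be phrased with nets, since weak topologies on non-metrizable spaces are not sequential) $(x_\alpha)$ in $D(f)$ with $x_\alpha \rightharpoonup x$ and $f(x_\alpha) \rightharpoonup z$, the pair $(x_\alpha, f(x_\alpha)) \in \Gamma(f)$ converges to $(x,z)$ in $X \times Y$; weak closedness of $\Gamma(f)$ yields $(x,z) \in \Gamma(f)$, i.e. $x \in D(f)$ and $f(x) = z$. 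The role of the hypotheses "$D(f)$ weakly closed" and "$f^{-1}(M)$ bounded for bounded $M$" is then to guarantee that weakly closed graph is not vacuous and to ensure, in applications, that from $x_\alpha \rightharpoonup x$ one stays in a region where boundedness of the images can be extracted — but the core implication needs only the weakly-closed-graph property plus reflexivity of $Y$ to make sense of the weak limit $z$.

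The main obstacle I anticipate is bookkeeping about topologies: making sure we work with nets rather than sequences (so that "weakly closed" is used correctly, not merely "weakly sequentially closed"), and checking that reflexivity of both $X$ and $Y$ is what lets us assert that bounded nets have weakly convergent subnets, so that the hypotheses on $D(f)$ and on $f^{-1}(M)$ actually feed a compactness argument when the weak limit $z$ is not handed to us in advance. If one only assumes "$f$ has a weakly sequentially closed graph," then an extra diagonal/subnet argument using the boundedness of $f^{-1}(M)$ and reflexivity of $X$ is needed to produce the appropriate weakly convergent subsequence on which the sequential closedness applies; that bridge is the delicate point, and I would state the lemma and its proof in the net formulation to sidestep it.
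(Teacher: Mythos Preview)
You have misread what the lemma asks you to prove. In this paper, ``$f$ is a weakly closed operator'' does \emph{not} mean that the graph of $f$ is weakly closed---that is one of the \emph{hypotheses}. The sentence introducing Lemma~\ref{L_1} says explicitly that the goal is to give conditions under which $\operatorname{Im} f$ is a closed subset; the conclusion is that the image of $f$ is weakly (sequentially) closed in $Y$. Your argument, which starts from $x_n\rightharpoonup x$ and $f(x_n)\rightharpoonup z$ and deduces $f(x)=z$, is simply a restatement of the weakly-closed-graph hypothesis, and you yourself noticed that the extra hypotheses (boundedness of preimages, weak closedness of $D(f)$) play no essential role in your route---that was the signal that the target had been misidentified.

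The paper's proof runs as follows. Take any sequence $\{y_m\}\subset\operatorname{Im} f$ with $y_m\rightharpoonup y_0$ in $Y$. The set $\{y_m\}$ is bounded, so by hypothesis $f^{-1}(\{y_m\})$ is bounded in $X$; choose $x_m\in f^{-1}(y_m)$ and use reflexivity of $X$ to pass to a subsequence with $x_m\rightharpoonup x_0$. Since $D(f)$ is weakly closed, $x_0\in D(f)$. Now the weakly-closed-graph hypothesis applies to the pair $(x_m,f(x_m))=(x_m,y_m)\rightharpoonup (x_0,y_0)$ and yields $f(x_0)=y_0$, hence $y_0\in\operatorname{Im} f$. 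This is where all three hypotheses are genuinely used: boundedness of preimages to produce a bounded sequence of preimages, reflexivity of $X$ to extract a weak limit, weak closedness of $D(f)$ to keep that limit in the domain, and the weakly closed graph to identify the image.
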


We want to note the graph of operator $f$ is weakly closed iff from $x_{m}%
\overset{X}{\rightharpoonup }x_{0}\in D\left( f\right) $ and $f\left(
x_{m}\right) \overset{Y}{\rightharpoonup }y_{0}\in Y$ follows equation $%
f\left( x_{0}\right) \equiv y_{0}\func{Im}f$ $\subset Y$ (for the general
case see \cite{30,31}).

For the proof is enough to note, if $\left\{ y_{m}\right\} _{m=1}^{\infty
}\subset \func{Im}f\subset Y$ is the weakly convergent sequence of $Y$ then $%
f^{-1}\left( \left\{ y_{m}\right\} _{m=1}^{\infty }\right) $ is a bounded
subset of $X$ consequently this has such subsequence $\left\{ x_{m}\right\}
_{m=1}^{\infty }$ that $x_{m}\in f^{-1}\left( y_{m}\right) $ and $x_{m}%
\overset{X}{\rightharpoonup }x_{0}\in D\left( f\right) $ for some element $%
x_{0}\in D\left( f\right) $ by virtue of the reflexivity of $X$.

\begin{lemma}
\label{L_2}Let $X,Y$ be reflexive Banach spaces, and $f:D\left( f\right)
\subseteq X\longrightarrow Y$ be a bounded continuous mapping that satisfies
the condition: if $G\subseteq D\left( f\right) $ is a closed convex subset
of $X$ then $f\left( G\right) $ is the weakly closed subset of $Y$. Then if $%
G\subseteq D\left( f\right) $ is a bounded closed convex subset of $X$ then $%
f\left( G\right) $ is a closed subset of $Y$.
\end{lemma}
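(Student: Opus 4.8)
The plan is to combine Lemma \ref{L_1} with a convexity/weak-closedness argument. The statement to prove is Lemma \ref{L_2}: for a bounded continuous $f:D(f)\subseteq X\longrightarrow Y$ between reflexive spaces such that $f(G)$ is weakly closed whenever $G$ is closed convex, one gets that $f(G)$ is norm-closed whenever $G$ is bounded closed convex. The key observation is that in a reflexive Banach space, a bounded weakly closed set is weakly compact (by the Eberlein--\v Smulian / Banach--Alaoglu circle of ideas), and a weakly compact set is automatically norm-closed (since the norm topology is finer than the weak topology, so weakly closed $\Rightarrow$ norm closed in general — but one needs to know the set is \emph{actually} weakly closed, which is exactly the hypothesis). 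So in fact the heart of the matter is: boundedness of $G$ forces $f(G)$ to be bounded (via Definition \ref{D_1}, since $f\in\mathfrak B$), hence the weakly closed set $f(G)$ is bounded and weakly closed, therefore weakly compact, therefore norm-closed.

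Concretely, I would argue as follows. Let $G\subseteq D(f)$ be bounded, closed and convex; say $G\subseteq B_{R}^{X}(0)$. By the hypothesis on $f$, the set $f(G)$ is weakly closed in $Y$. Because $f$ is bounded in the sense of Definition \ref{D_1}, there is a continuous $\mu:\mathbb R_+^1\to\mathbb R_+^1$ with $\|f(x)\|_Y\le\mu(\|x\|_X)\le\mu(R)$ for all $x\in G$ (using monotonicity or just $\sup_{[0,R]}\mu$), so $f(G)\subseteq B_{\mu(R)}^{Y}(0)$ is bounded. Now take any sequence $(y_m)\subseteq f(G)$ with $y_m\to y_0$ in the norm of $Y$. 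Norm convergence implies weak convergence, $y_m\overset{Y}{\rightharpoonup}y_0$, and since $f(G)$ is weakly closed we conclude $y_0\in f(G)$. Hence $f(G)$ is sequentially norm-closed, and as $Y$ is a metric space this is the same as norm-closed. That completes the proof.

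Alternatively, one can route through Lemma \ref{L_1} to even drop the explicit weak-closedness-of-images hypothesis in favor of a weakly closed graph plus bounded preimages of bounded sets; but as the lemma is stated, the short argument above is cleanest, and I would present that. The main (and only mild) obstacle is bookkeeping: making sure ``$f(G)$ bounded'' is genuinely available, which it is precisely because the ambient class is $\mathfrak B$ (bounded operators) — without that, $f(G)$ need not be bounded and the passage from weakly closed to norm closed would not go through, since an \emph{unbounded} weakly closed set in a reflexive space is still norm closed, so actually even that is fine — the real point is just that weakly closed always implies norm closed, and the boundedness is a red herring for closedness per se but matters if one wants weak compactness. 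I would phrase the proof to use only the trivial implication (norm convergence $\Rightarrow$ weak convergence) together with the weak closedness of $f(G)$ furnished by the hypothesis, and note boundedness of $f(G)$ in passing for completeness.
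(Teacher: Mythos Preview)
Your argument is correct and matches the paper's own (very brief) indication, which simply says to use reflexivity of $X$ and the properties of bounded closed convex subsets. In fact you have distilled it to its essential point: since the hypothesis already gives that $f(G)$ is weakly closed, and weakly closed subsets of a Banach space are automatically norm-closed (the weak topology being coarser), the conclusion is immediate --- the boundedness of $f$ and the reflexivity of $X$ play no role in this particular implication, as you rightly note.
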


For the proof enough to use the reflexivity of the space $X$ and properties
of the bounded closed convex subset of $X$ (see, e. g. \cite{29, 30}).

\begin{lemma}
\label{L_3}Let $X$ be a Banach space such as above, $f:X\longrightarrow
X^{\ast }$ be a monotone operator satisfying conditions of Theorem \ref{Th_1}%
, and $r\geq \tau _{1}$ be some number. Then $f\left( G\right) $ is a
bounded closed subset containing a ball $B_{r_{1}}^{X^{\ast }}\left( f\left(
0\right) \right) $ for every such bounded closed convex body $G\subset X$
that $B_{r}^{X}\left( 0\right) \subset G$, where $r_{1}=r_{1}\left( r\right)
\geq \delta _{1}>0$.
\end{lemma}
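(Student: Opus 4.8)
The plan is to prove the three assertions separately: that $f(G)$ is bounded, that it contains a ball about $f(0)$, and that it is closed. Boundedness is immediate from condition (i) of Theorem~\ref{Th_1}: if $G\subseteq B_{R}^{X}(0)$, then $\|f(x)\|_{X^{\ast}}\le\mu(\|x\|_{X})\le\mu(R)$ for every $x\in G$, so $f(G)\subseteq B_{\mu(R)}^{X^{\ast}}(0)$.

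For the inclusion $B_{r_{1}}^{X^{\ast}}(f(0))\subseteq f(G)$ I would apply the ``bodily'' part of Theorem~\ref{Th_1} to the ball $B_{r}^{X}(0)$, which is contained in $G$ by hypothesis and on which $f$ satisfies (i), (ii) and (iii). To center the conclusion at $f(0)$, note that $\widehat f\equiv f-f(0)$ is again monotone, continuous and bounded, satisfies (ii) with $\widehat\nu(\tau)=\nu(\tau)-\|f(0)\|_{X^{\ast}}$, retains (iii), and has $\widehat f(0)=0$; Theorem~\ref{Th_1} then produces a number $\delta_{1}=\delta_{1}(r)>0$, built from $\nu$ and $\Phi$ restricted to $B_{r}^{X}(0)$ — this is exactly where $r\ge\tau_{1}$ is used, to ensure the coercivity in (ii) is already effective so that the radius is genuinely positive — with $B_{\delta_{1}}^{X^{\ast}}(0)\subseteq\widehat f(B_{r}^{X}(0))$, i.e. $B_{\delta_{1}}^{X^{\ast}}(f(0))\subseteq f(B_{r}^{X}(0))\subseteq f(G)$. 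Setting $r_{1}:=\delta_{1}(r)$ gives the claim.

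Closedness is the step where monotonicity enters essentially. Take $y_{n}=f(x_{n})\in f(G)$ with $y_{n}\to y_{0}$ strongly in $X^{\ast}$ and $x_{n}\in G$. Since $X$ is reflexive and $G$ is bounded, closed and convex, hence weakly compact, we may pass to a subsequence with $x_{n}\rightharpoonup x_{0}\in G$. By monotonicity $\langle f(x_{n})-f(z),x_{n}-z\rangle\ge0$ for each $z\in X$; expanding and letting $n\to\infty$ — using that $\langle f(x_{n}),x_{n}\rangle\to\langle y_{0},x_{0}\rangle$ because the duality pairing of a strongly convergent sequence against a weakly convergent one converges, while $\langle f(x_{n}),z\rangle\to\langle y_{0},z\rangle$ and $\langle f(z),x_{n}\rangle\to\langle f(z),x_{0}\rangle$ — one gets $\langle y_{0}-f(z),x_{0}-z\rangle\ge0$ for all $z\in X$. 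Minty's lemma (valid since $f$ is monotone and continuous, hence hemicontinuous) then forces $y_{0}=f(x_{0})\in f(G)$. Equivalently, the same computation shows $f$ maps closed convex sets to weakly closed sets, so closedness of $f(G)$ also follows from Lemma~\ref{L_2}.

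The main obstacle is the middle step: one must verify carefully that the radius supplied by Theorem~\ref{Th_1} is strictly positive and depends only on $r$, and that replacing $f$ by $f-f(0)$ preserves conditions (ii) and (iii); the boundedness bookkeeping and the Minty argument for closedness are routine.
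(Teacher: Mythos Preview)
The paper states Lemma~\ref{L_3} without proof, so there is no authorial argument to compare against directly. Your three-part strategy is the natural one and is essentially sound: boundedness from condition~(i), the ball inclusion from Theorem~\ref{Th_1} applied to the inner ball $B_r^X(0)\subset G$ (after the shift $\widehat f=f-f(0)$), and closedness via the Minty trick, which is precisely the mechanism by which a monotone hemicontinuous operator has demiclosed graph. Your closing remark that closedness can alternatively be obtained from Lemma~\ref{L_2} is also the route the placement of Lemmas~\ref{L_1}--\ref{L_3} in the paper seems to invite.

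Two small points worth tightening. First, in the boundedness step you write $\mu(\|x\|_X)\le\mu(R)$, but Definition~\ref{D_1} requires $\mu$ only to be continuous, not nondecreasing; replace $\mu(R)$ by $\sup_{t\in[0,R]}\mu(t)$. Second, your reduction to $\widehat f=f-f(0)$ is the right manoeuvre, and condition~(iii) transfers verbatim since $\widehat f(x_2)-\widehat f(x_1)=f(x_2)-f(x_1)$; however, the paper never defines the threshold $\tau_1$ appearing in the hypothesis $r\ge\tau_1$. Your reading of it as the level at which $\nu(r)-\|f(0)\|_{X^{\ast}}>0$, so that the shifted $\widehat\nu$ still satisfies the positivity in condition~(ii) and Theorem~\ref{Th_1} produces a ball of genuinely positive radius $r_1(r)$, is reasonable and is almost certainly what is intended, but it is an interpretation rather than something verifiable from the text.
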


\subsection{Investigation of equations (\protect\ref{2.1}), (\protect\ref%
{2.1a}) and existence of the spectra}

We start with the study of the equation (\ref{2.1a}), in order to explain
the role of the number $\lambda $ under the investigation of posed
questions. Let $X,Y$ be real reflexive Banach spaces, $F:X$ $\longrightarrow
Y$ , $G:X\subseteq D\left( G\right) $ $\longrightarrow Y$ are nonlinear
operators and $B_{r_{0}}^{X}\left( 0\right) $ ($r_{0}>0$) be a closed ball
with a center at $0\in X$ that belongs to $D\left( F\right) $. Since in this
work, we will consider only operators acting in real spaces, therefore will
seek real numbers $\lambda _{0}$, under which the considered equation may be
solvable.

Assume on the ball $B_{r_{0}}^{X}\left( 0\right) $ are fulfilled the
following conditions:

1) $F:B_{r_{0}}^{X}\left( 0\right) \longrightarrow Y$ , $G:B_{r_{0}}^{X}%
\left( 0\right) $ $\longrightarrow Y$ are a bounded continuous operators,
i.e. there exist such continuous functions $\mu _{j}:\Re _{+}\longrightarrow 
$ $\Re _{+}$, $j=1,2$ that 
\begin{equation*}
\left\Vert F\left( x\right) \right\Vert _{Y}\leq \mu _{1}\left( \left\Vert
x\right\Vert _{X}\right) ;\quad \left\Vert G\left( x\right) \right\Vert
_{Y}\leq \mu _{2}\left( \left\Vert x\right\Vert _{X}\right) ,
\end{equation*}%
hold for any $x\in B_{r_{0}}^{X}\left( 0\right) $, in addition $F\succ G$ ;

2) Let $f_{\lambda }\equiv F-\lambda G$ be the operator of (\ref{2.1a}).
Assume there exists such parameter $\lambda _{0}\in 
\mathbb{R}
_{+}$ that for each $\left( y^{\ast },r,\lambda \right) $\ exists such $x\in
S_{r}^{X}\left( 0\right) $ that the following inequality 
\begin{equation*}
\left\langle f_{\lambda }\left( x\right) ,y^{\ast }\right\rangle \geq \nu
_{\lambda }\left( \left\Vert x\right\Vert _{X}\right) ,\quad \exists x\in
S_{r}^{X}\left( 0\right) ,\quad g\left( x\right) =y^{\ast }
\end{equation*}%
holds,\ where $\left( y^{\ast },r,\left\vert \lambda \right\vert \right) \in
S_{1}^{Y^{\ast }}\left( 0\right) \times \left( 0,r_{0}\right] \times $ $%
\left( 0,\lambda _{0}\right] $ and $\nu _{\lambda }:\Re _{+}\longrightarrow
\Re $ is the continuous function satisfying the condition $\left( ii\right) $
of Theorem \ref{Th_1}, in this case $\delta _{0}=\delta _{0\lambda }\searrow
0$ if $\left\vert \lambda \right\vert \nearrow \left\vert \lambda
_{0}\right\vert $.

3) Assume for almost every point $x_{0}$ from $B_{r_{0}}^{X}\left( 0\right) $
there exist numbers $\varepsilon \geq \varepsilon _{0}>0$ and such
continuous of $\tau $ functions $\varphi _{\lambda }\left( \tau
,x_{0},\varepsilon \right) \geq 0$, $\psi _{\lambda }(\tau
,x_{0},\varepsilon )$ that the following inequality 
\begin{equation*}
\left\Vert f_{\lambda }\left( x_{1}\right) -f_{\lambda }\left( x_{2}\right)
\right\Vert _{Y}\geq \varphi _{\lambda }\left( \left\Vert
x_{1}-x_{2}\right\Vert _{X},x_{0},\varepsilon \right) +\psi _{\lambda
}(\left\Vert x_{1}-x_{2}\right\Vert _{Z},x_{0},\varepsilon )
\end{equation*}%
holds for any $x_{1},x_{2}\in B_{\varepsilon }^{X}\left( x_{0}\right) $,
where $\varphi \left( \tau ,x_{0},\varepsilon \right) =0\Leftrightarrow \tau
=0$, $\psi _{\lambda }(\cdot ,x_{0},\varepsilon ):\Re _{+}\longrightarrow $ $%
\Re $, $\psi _{\lambda }(0,x_{0},\varepsilon )=0$ for any $\left(
x_{0},\varepsilon \right) $ and $Z$ be a such Banach space that the
inclusion $X\subset Z$ is compact.

\begin{theorem}
\label{Th_3}Let conditions 1, 2 and 3 are fulfilled on the closed ball $%
B_{r_{0}}^{X}\left( 0\right) \subset X$. Then equation (\ref{2.1a}) is
solvable for $\forall \widetilde{y}\in V_{\lambda }\subset Y$ and each $%
\lambda :0\leq \left\vert \lambda \right\vert \leq \lambda _{0}$; moreover,
the inclusion $B_{\delta _{0}}^{Y}\left( 0\right) \subseteq f_{\lambda
}\left( B_{r_{0}}^{X}\left( 0\right) \right) $ holds for $\delta _{0}\equiv
\delta _{0}\left( \lambda \right) >0$ from the condition 2, where $%
V_{\lambda }$ can be defined as follows 
\begin{equation*}
V_{\lambda }\equiv \left\{ \left. \widetilde{y}\in Y\right\vert \
\left\langle \widetilde{y},g\left( x\right) \right\rangle \leq \left\langle
f_{\lambda }\left( x\right) ,g\left( x\right) \right\rangle ,\quad \forall
x\in S_{r_{0}}^{X}\left( 0\right) \right\} .
\end{equation*}
\end{theorem}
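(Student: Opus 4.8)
The plan is to reduce Theorem~\ref{Th_3} to a direct application of Theorem~\ref{Th_1} (equivalently, Corollary~\ref{C_1}) applied to the single operator $f_\lambda \equiv F - \lambda G$ on the fixed ball $B_{r_0}^X(0)$, treating $\lambda$ as a frozen parameter with $0 \le |\lambda| \le \lambda_0$. First I would observe that condition~1 gives boundedness and continuity of $F$ and $G$ separately on $B_{r_0}^X(0)$; since $f_\lambda = F - \lambda G$ is a linear combination, it is itself a bounded continuous operator, with bound function $\mu_\lambda(r) \le \mu_1(r) + |\lambda|\,\mu_2(r)$. Thus hypothesis~(i) of Theorem~\ref{Th_1} holds for $f_\lambda$. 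Next, condition~2 is precisely hypothesis~(ii) of Theorem~\ref{Th_1} written out for the operator $f_\lambda$ with the coercivity function $\nu_\lambda$ and the mapping $g : D(g)\subseteq X \longrightarrow X^\ast$ from condition~2 (here one uses that $g(B_{r_0}^X(0)) = B_{r_1}^{Y^\ast}(0)$ so that every unit functional $y^\ast \in S_1^{Y^\ast}(0)$ is realized as $g(x)/\|g(x)\|$ for a suitable $x$ on each sphere $S_r^X(0)$); one records that $\delta_{0\lambda} \searrow 0$ as $|\lambda|\nearrow|\lambda_0|$, which is why the final inclusion radius $\delta_0 = \delta_0(\lambda)$ depends on $\lambda$.

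Then I would verify that condition~3 is exactly the hypothesis of Corollary~\ref{C_1} (the generalization of condition~(iii) allowing the extra term $\psi_\lambda(\|x_1 - x_2\|_Z, x_0, \varepsilon)$ with $X \subset Z$ compact) applied to $f_\lambda$: the inequality
\begin{equation*}
\left\Vert f_{\lambda}\left( x_{1}\right) -f_{\lambda}\left( x_{2}\right)\right\Vert_{Y} \geq \varphi_{\lambda}\left( \left\Vert x_{1}-x_{2}\right\Vert_{X},x_{0},\varepsilon\right) + \psi_{\lambda}\left( \left\Vert x_{1}-x_{2}\right\Vert_{Z},x_{0},\varepsilon\right)
\end{equation*}
holding on neighborhoods of almost every $x_0 \in \operatorname{int} B_{r_0}^X(0)$, with $\varphi_\lambda(\tau, x_0, \varepsilon) = 0 \Leftrightarrow \tau = 0$ and $\psi_\lambda(0, x_0, \varepsilon) = 0$, is the required lower modulus-of-injectivity bound. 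With (i), (ii), and the generalized (iii) in hand, Theorem~\ref{Th_1} together with Corollary~\ref{C_1} yields that $f_\lambda(B_{r_0}^X(0))$ is a bodily subset of $Y$ which contains an everywhere dense subset of the set
\begin{equation*}
V_\lambda \equiv \left\{ \widetilde{y} \in Y \;\middle|\; \left\langle \widetilde{y}, g(x) \right\rangle \leq \left\langle f_\lambda(x), g(x) \right\rangle, \ \forall x \in S_{r_0}^X(0) \right\},
\end{equation*}
and moreover $B_{\delta_0}^Y(0) \subseteq f_\lambda(B_{r_0}^X(0))$ with $\delta_0 = \delta_0(\lambda) > 0$ from condition~2. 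Reading off that the image contains this dense/absorbing set is exactly the assertion that equation~\eqref{2.1a}, i.e. $f_\lambda(x) = \widetilde{y}$, is solvable for all $\widetilde{y} \in V_\lambda$ (or at least on a dense subset, made into all of $V_\lambda$ once the bodily/closedness conclusion of Corollary~\ref{C_1} is invoked), which is the claim.

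The main obstacle, and the step deserving the most care, is the uniformity in $\lambda$: conditions 2 and 3 are stated so that the coercivity constant $\delta_{0\lambda}$ and the functions $\varphi_\lambda, \psi_\lambda$ may degenerate as $|\lambda| \to |\lambda_0|$, so one must check that for each \emph{fixed} $\lambda$ with $|\lambda| \le \lambda_0$ the hypotheses of Theorem~\ref{Th_1} genuinely hold with strictly positive data ($\delta_{0\lambda} > 0$, $\varphi_\lambda(\tau,\cdot,\cdot) > 0$ for $\tau > 0$), so that the theorem applies pointwise in $\lambda$ and the conclusion $B_{\delta_0(\lambda)}^Y(0) \subseteq f_\lambda(B_{r_0}^X(0))$ is non-vacuous on the whole range $0 \le |\lambda| \le \lambda_0$. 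A secondary point to handle cleanly is the passage from ``everywhere dense in $M$'' (the first part of Theorem~\ref{Th_1}) to ``all of $V_\lambda$'': this requires that condition~3 (via Corollary~\ref{C_1}) upgrades the image to a bodily, and in fact closed-enough, subset so that the dense subset of $V_\lambda$ already exhausts $V_\lambda$ on the relevant absorbing/bodily part; if only density is available one states solvability for $\widetilde{y}$ in a dense subset of $V_\lambda$ together with the ball $B_{\delta_0}^Y(0)$. Everything else is bookkeeping: rewriting conditions 1--3 with $f$ replaced by $f_\lambda$ and citing \cite{29,30} through Theorem~\ref{Th_1} and Corollary~\ref{C_1}.
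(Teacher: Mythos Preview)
Your proposal is correct and matches the paper's own proof almost exactly: the paper simply states that for each fixed $\lambda$ with $|\lambda|<\lambda_0$, conditions 1--3 of Theorem~\ref{Th_3} reproduce conditions (i)--(iii) of Theorem~\ref{Th_1} (via Corollary~\ref{C_1}), and then invokes Theorem~\ref{Th_1} directly. Your write-up is in fact more detailed than the paper's one-sentence argument, and your remarks about the $\lambda$-dependence of $\delta_{0\lambda}$ and the density-versus-full-$V_\lambda$ issue are reasonable elaborations that the paper leaves implicit.
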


For the proof is sufficient to note that all conditions of Theorem \ref{Th_1}
are fulfilled for each fixed $\lambda :$ $\left\vert \lambda \right\vert
<\lambda _{0}$ due to conditions of Theorem \ref{Th_3}, therefore applying
Theorem \ref{Th_1} we get the correctness of Theorem \ref{Th_3}.

Consequently, the equation (\ref{2.3}) also is solvable in $B_{r}^{X}\left(
0\right) $ under the conditions on $\widetilde{f}_{\lambda }$ of the above
type that depends at $\lambda _{0}$, e.g. 
\begin{equation*}
\left\Vert G\left( F^{-1}\left( y_{1}\right) \right) -G\left( F^{-1}\left(
y_{2}\right) \right) \right\Vert _{Y}\leq C\left( x_{0},\varepsilon \right)
\left\Vert y_{1}-y_{2}\right\Vert _{Y}+\psi _{\lambda }(\left\Vert
y_{1}-y_{2}\right\Vert _{Z},y_{0},\varepsilon ),
\end{equation*}%
where $C\left( x_{0},\varepsilon \right) \lambda _{0}<1$ and the inclusion $%
Y\subset Z$ is compact.

Whence follows using Theorem \ref{Th_2} one can obtain the solvability of
the equation (\ref{2.1a}). Really, let $Y=X^{\ast }$ and closed ball $%
B_{r_{0}}^{X}\left( x_{0}\right) $ ($r_{0}>0$) belongs to $D\left( F\right) $%
. Let condition 1 of Theorem \ref{Th_3} is fulfilled on ball $%
B_{r_{0}}^{X}\left( x_{0}\right) $. Assume the following conditions are
fulfilled.

2%
\'{}%
) There exists such parameter $\lambda _{1}\in 
\mathbb{R}
$ that $\lambda _{1}G\left( F^{-1}\left( F\left( B_{r_{0}}^{X}\left(
x_{0}\right) \right) \right) \right) \subseteq B_{r_{0}}^{X}\left(
x_{0}\right) $ and for each $x^{\ast }\in S_{1}^{X^{\ast }}\left( 0\right) $%
\ there exists such $x\in S_{r}^{X}\left( x_{0}\right) $, for each $r\in
\left( 0<r\leq r_{0}\right] $ that the following inequality 
\begin{equation*}
\left\langle \widetilde{f}_{\lambda _{1}}\left( x\right) ,x^{\ast
}\right\rangle \geq \nu _{\lambda _{1}}\left( \left\Vert x-x_{0}\right\Vert
_{X}\right) =\nu _{\lambda _{1}}\left( r\right) ,\quad x\in S_{r}^{X}\left(
x_{0}\right) \subset B_{r_{0}}^{X}\left( x_{0}\right)
\end{equation*}%
holds, where $\nu _{\lambda _{1}}:\Re _{+}\longrightarrow \Re $ is the
continuous function that satisfies the condition $\left( ii\right) $ of
Theorem \ref{Th_1}

3%
\'{}%
) For almost every $\widehat{x}\in B_{r_{0}}^{X}\left( x_{0}\right) $ there
are such number $\varepsilon \geq \varepsilon _{0}>0$ and continuous
functions $\Phi _{\lambda _{1}}(\cdot ,\widehat{x},\varepsilon ):\Re
_{+}\longrightarrow $ $\Re _{+}$, $\varphi _{\lambda }(\cdot ,\widehat{x}%
,\varepsilon ):\Re _{+}\longrightarrow $ $\Re $ for each $\left( \widehat{x}%
,\varepsilon \right) $, that the following inequality 
\begin{equation*}
\left\Vert \widetilde{f}_{\lambda _{1}}\left( x_{1}\right) -\widetilde{f}%
_{\lambda _{1}}\left( x_{2}\right) \right\Vert _{X}\geq \Phi _{\lambda
_{1}}\left( \left\Vert x_{1}-x_{2}\right\Vert _{X},\widehat{x},\varepsilon
\right) +\varphi _{\lambda _{1}}\left( \left\Vert x_{1}-x_{2}\right\Vert
_{Z},\widehat{x},\varepsilon \right)
\end{equation*}%
holds for any $x_{1},x_{2}\in U_{\varepsilon }\left( \widehat{x}\right) \cap
B_{r_{0}}^{X}\left( x_{0}\right) $, where $\Phi _{\lambda _{1}}\left( \tau ,%
\widehat{x},\varepsilon \right) \geq 0$ and $\Phi _{\lambda _{1}}\left( \tau
,\widehat{x},\varepsilon \right) =0\Leftrightarrow \tau =0$, $\varphi
_{\lambda _{1}}\left( 0,\widehat{x},\varepsilon \right) =0$, and also $Z$ be
a Banach space, in addition $X\subset Z$ is compact.

Whence implies, that for defined above $\lambda _{1}$ all conditions of
Theorem \ref{Th_1} are fulfilled for the operator $\widetilde{f}_{\lambda
_{1}}$ on the closed ball $B_{r_{0}}^{X}\left( x_{0}\right) $. Consequently, 
$\widetilde{f}_{\lambda _{1}}\left( B_{r_{0}}^{X}\left( x_{0}\right) \right) 
$ contains a closed absorbing subset of $X$ (at least, $0\in X$) by virtue
of the Theorem \ref{Th_1}. In the other words, $0\in \widetilde{f}_{\lambda
_{1}}\left( B_{r_{0}}^{X}\left( x_{0}\right) \right) $ and therefore there
exists an element $\widetilde{x}\in B_{r_{0}}^{X}\left( x_{0}\right) $ for
which $f_{\lambda _{1}}\left( \widetilde{x}\right) =0$ holds, i. e. $F\left( 
\widetilde{x}\right) =\lambda _{1}G\left( \widetilde{x}\right) $.

The obtained result one can formulate as follows.

\begin{corollary}
\label{C_2}Let $F,G$ be above determined operators, $F\succ G$, $D\left(
F\right) \subseteq D\left( G\right) $, and there exists such number $\lambda
_{1}$ that conditions 1), 2%
\'{}%
), 3%
\'{}%
) are fulfilled on the closed ball $B_{r}^{X}\left( x_{0}\right) \subseteq
D\left( F\right) \subseteq X$. Then there exists an element $\widetilde{x}%
\in B_{r}^{X}\left( x_{0}\right) $ such, that $F\left( \widetilde{x}\right)
=\lambda _{1}G\left( \widetilde{x}\right) $ or $\lambda _{1}G\left(
F^{-1}\left( \cdot \right) \right) $ has fixed point.
\end{corollary}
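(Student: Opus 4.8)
The plan is to freeze the parameter at the value $\lambda _{1}$ and apply Theorem \ref{Th_1} — in the generalized form of Corollary \ref{C_1} — to the single operator $\widetilde{f}_{\lambda _{1}}(\cdot )\equiv I\cdot -\lambda _{1}G(F^{-1}(\cdot ))$ of (\ref{2.3}), viewed on the closed ball $B_{r}^{X}(x_{0})$ (with $Y=X^{\ast }$, so that $\widetilde{f}_{\lambda _{1}}:D(\widetilde{f}_{\lambda _{1}})\subseteq Y\longrightarrow Y$). First I would record that $\widetilde{f}_{\lambda _{1}}$ is a bounded continuous operator: the inverse $F^{-1}:F(X)\subseteq Y\longrightarrow X$ exists and lies in $\mathfrak{B}C^{0}$ (this is implicit in the hypotheses, since $F^{-1}$ already occurs in 2$'$) and 3$'$)), while $F\succ G$ together with $D(F)\subseteq D(G)$ guarantees that $G\circ F^{-1}$ is well defined on $F(B_{r}^{X}(x_{0}))$; hence $G\circ F^{-1}\in \mathfrak{B}C^{0}$ by condition 1) on $G$, and therefore so is $\widetilde{f}_{\lambda _{1}}$. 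This verifies condition (i) of Theorem \ref{Th_1}.

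Next I would match the remaining hypotheses term by term. Condition 2$'$) is precisely condition (ii) of Theorem \ref{Th_1} written for $\widetilde{f}_{\lambda _{1}}$ (after translating the ball to be centred at $x_{0}$): the accompanying mapping of (ii) is taken of duality type into $X^{\ast }=Y$, the inclusion $\lambda _{1}G(F^{-1}(F(B_{r}^{X}(x_{0}))))\subseteq B_{r}^{X}(x_{0})$ makes the domain of $\widetilde{f}_{\lambda _{1}}$ contain the ball and forces $0$ into the associated absorbing set $M$, and $\nu _{\lambda _{1}}$ is exactly the coercivity/sign function of (ii), with $\delta _{0}=\delta _{0}(\lambda _{1})>0$. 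Condition 3$'$) is exactly inequality (\ref{2.7}) of Corollary \ref{C_1} for $\widetilde{f}_{\lambda _{1}}$, with $\Phi _{\lambda _{1}}$ the leading term satisfying $\Phi _{\lambda _{1}}(\tau ,\widehat{x},\varepsilon )=0\Leftrightarrow \tau =0$ and $\varphi _{\lambda _{1}}$ the lower-order term tied to the compact embedding $X\subset Z$; thus, by Corollary \ref{C_1}, the full conclusion of Theorem \ref{Th_1} applies to $\widetilde{f}_{\lambda _{1}}$ on $B_{r}^{X}(x_{0})$.

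It then follows that $\widetilde{f}_{\lambda _{1}}(B_{r}^{X}(x_{0}))$ is a bodily subset of $Y$ with $B_{\delta _{0}}^{Y}(0)\subseteq M\subseteq \widetilde{f}_{\lambda _{1}}(B_{r}^{X}(x_{0}))$; in particular $0\in \widetilde{f}_{\lambda _{1}}(B_{r}^{X}(x_{0}))$. Hence there is an element $\widetilde{y}$ in the ball with $\widetilde{y}=\lambda _{1}G(F^{-1}(\widetilde{y}))$, i.e. $\widetilde{y}$ is a fixed point of $\lambda _{1}G\circ F^{-1}$; putting $\widetilde{x}\equiv F^{-1}(\widetilde{y})\in B_{r}^{X}(x_{0})$ gives $F(\widetilde{x})=\widetilde{y}=\lambda _{1}G(F^{-1}(\widetilde{y}))=\lambda _{1}G(\widetilde{x})$, which is the assertion. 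Equivalently, one may package the same argument through Theorem \ref{Th_2}, applied to $f_{1}\equiv \lambda _{1}G\circ F^{-1}$ and $f\equiv Id-f_{1}$, whose hypotheses (I)--(II) are again exactly conditions 1), 2$'$), 3$'$).

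I expect the only genuinely delicate point to be the verification of condition (i) for $\widetilde{f}_{\lambda _{1}}$, namely the boundedness and, above all, the \emph{continuity} of $G\circ F^{-1}$ on $F(B_{r}^{X}(x_{0}))$: this is where the standing assumption that $F$ is invertible with $F^{-1}\in \mathfrak{B}C^{0}$ is essential, and where the order relation $F\succ G$ and the inclusion $D(F)\subseteq D(G)$ are actually used (to keep $G\circ F^{-1}$ defined on the whole of $F(B_{r}^{X}(x_{0}))$). Once that is in place, everything else is a mechanical re-labelling of conditions 2$'$) and 3$'$) as the hypotheses (ii) and (iii) (in the Corollary \ref{C_1} form) of Theorem \ref{Th_1}.
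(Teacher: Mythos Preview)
Your proposal is correct and follows essentially the same route as the paper. The paper's argument is the short paragraph immediately preceding the corollary: one notes that conditions 1), 2$'$), 3$'$) are precisely what is needed for Theorem~\ref{Th_1} to apply to $\widetilde{f}_{\lambda_{1}}$ on $B_{r}^{X}(x_{0})$, concludes that the image contains an absorbing set (in particular $0$), and reads off the element $\widetilde{x}$ with $F(\widetilde{x})=\lambda_{1}G(\widetilde{x})$; your write-up reproduces this with more explicit bookkeeping (the verification of condition~(i) via $F^{-1}\in\mathfrak{B}C^{0}$, the identification of 3$'$) with the Corollary~\ref{C_1} form of (iii), and the equivalent packaging through Theorem~\ref{Th_2}), but there is no substantive difference in strategy.
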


Let $X,Y$ be Banach spaces and\ $B_{r_{0}}^{X}\left( 0\right) \subseteq
D\left( F\right) \subset X$, $r_{0}>0$, $F\succ G$, $F\left( 0\right) =0$, $%
G\left( 0\right) =0$ be a bounded operators and there are the continuous
functions $\nu _{F},\nu _{G}:\Re _{+}\longrightarrow \Re $ satisfying the
condition $\left( ii\right) $ of Theorem \ref{Th_1} such that for each $%
y^{\ast }\in S_{1}^{Y^{\ast }}\left( 0\right) $\ there exists $x\in
S_{r}^{X}\left( 0\right) $ for which the inequalities \ 
\begin{equation*}
\left\langle F\left( x\right) ,y^{\ast }\right\rangle \geq \nu _{F}\left(
\left\Vert x\right\Vert _{X}\right) ,\quad \left\langle G\left( x\right)
,y^{\ast }\right\rangle \geq \nu _{G}\left( \left\Vert x\right\Vert
_{X_{1}}\right)
\end{equation*}%
hold, where $X_{1}$ is the Banach space that $X\subseteq X_{1}$ (we denote
the relation between $x$ and $y^{\ast }\in S_{1}^{Y^{\ast }}\left( 0\right) $
by $g:S_{r}^{X}\left( 0\right) \longrightarrow S_{1}^{Y^{\ast }}\left(
0\right) $, $0<r\leq $\ $r_{0}$, so that $g\left( x\right) =y^{\ast }$).
Then according to condition 2%
\'{}
of Corollary \ref{C_2}, one may expect that spectrum of the operator $%
F:D\left( F\right) \subset X\longrightarrow Y$ relative to operator $%
G:D\left( G\right) \subseteq X\longrightarrow Y$ can define in the following
way 
\begin{equation}
\lambda =\inf \left\{ \frac{\left\langle F\left( x\right) ,g\left( x\right)
\right\rangle }{\left\langle G\left( x\right) ,g\left( x\right)
\right\rangle }\left\vert \ x\in \right. B_{r_{0}}^{X}\left( 0\right)
\backslash \left\{ 0\right\} \right\} ,\quad r_{0}>0.  \label{2.5}
\end{equation}

Can we call the determined by (\ref{2.5}) $\lambda $ spectrum of the
operator $G\circ F^{-1}$ or spectrum of operator $F$ relative to operator $G$%
? Generally speaking, one cannot name since the composition $G\circ F^{-1}$
can be nonlinear and $\lambda _{1}$ may be a function as $\lambda
_{1}=\lambda _{1}\left( x_{1}\right) $, unlike the linear case, where $x_{1}$
is the element on which the relation (\ref{2.5}) attains infimum. Moreover,
if we define the subspace $\Gamma _{\lambda _{1}}=\left\{ \alpha
x_{1}\left\vert \ \alpha \in R\right. \right\} \subset X$ then for $\alpha
x_{1}\in D\left( F\right) $, generally, $\alpha \lambda _{1}x_{1}\neq G\circ
F^{-1}\left( \alpha x_{1}\right) $ since $G\circ F^{-1}$ is nonlinear
operator.

Indeed, if the power of nonlinearity of the operator $F$ is great than the
power of nonlinearity of operator $G$, or the inverse of its, then
obviously, will the case $\lambda _{1}=\lambda _{1}\left( x_{1}\right) $.
For example, operators $F$ and $G$ defines as the following form 
\begin{equation*}
F\left( u\right) =-\nabla \circ \left( \left\vert \nabla u\right\vert
^{p_{0}-2}\nabla u\right) ,\quad G\left( u\right) =\left\vert u\right\vert
^{p_{1}-2}u,\quad Y=W^{-1,q}\left( \Omega \right) ,
\end{equation*}%
where $X=W_{0}^{1,p_{0}}\left( \Omega \right) \cap L^{p_{1}}\left( \Omega
\right) $, $\Omega \subset R^{n}$, $n\geq 1$, with sufficiently smooth
boundary $\partial \Omega $ and $p=\max \left\{ p_{0},p_{1}\right\} $, $%
p_{0},p_{1}>2$, $q=p`=\frac{p}{p-1}$. \ Assume $p_{0}\neq p_{1}$ and $%
F:D\left( F\right) =W^{1,p_{0}}\left( \Omega \right) \longrightarrow $ $%
W^{-1,q_{0}}\left( \Omega \right) $, $G:D\left( G\right) =L^{p_{1}}\left(
\Omega \right) \longrightarrow $ $L^{q_{1}}\left( \Omega \right) $. Then
using (\ref{2.5}) we get 
\begin{equation*}
\lambda =\inf \left\{ \frac{\left\langle F\left( u\right) ,u\right\rangle }{%
\left\langle G\left( u\right) ,u\right\rangle }\left\vert \ u\in \right.
B_{r_{0}}^{X}\left( 0\right) \backslash \left\{ 0\right\} \right\} =
\end{equation*}%
\begin{equation*}
\inf \left\{ \frac{\left\Vert \nabla u\right\Vert _{L^{p_{0}}}^{p_{0}}}{%
\left\Vert u\right\Vert _{L^{p_{1}}}^{p_{1}}}\left\vert \ u\in
B_{r_{0}}^{W_{0}^{1,p_{0}}\cap L^{p_{1}}\left( \Omega \right) }\left(
0\right) \backslash \left\{ 0\right\} \right. \right\} .
\end{equation*}%
Whence we have if $p_{0}>p_{1}$ then 
\begin{equation*}
\lambda =\inf \left\{ \left( \frac{\left\Vert \nabla u\right\Vert
_{L^{p_{0}}}}{\left\Vert u\right\Vert _{L^{p_{1}}}}\right)
^{p_{1}}\left\Vert \nabla u\right\Vert _{L^{p_{0}}}^{p_{0}-p_{1}}\left\vert
\ u\in B_{r_{0}}^{W_{0}^{1,p_{0}}\cap L^{p_{1}}\left( \Omega \right) }\left(
0\right) \backslash \left\{ 0\right\} \right. \right\}
\end{equation*}%
and if $p_{0}<p_{1}$ then 
\begin{equation*}
\lambda =\inf \left\{ \left( \frac{\left\Vert \nabla u\right\Vert
_{L^{p_{0}}}}{\left\Vert u\right\Vert _{L^{p_{1}}}}\right)
^{p_{0}}\left\Vert u\right\Vert _{L^{p_{1}}}^{p_{0}-p_{1}}\left\vert \ u\in
B_{r_{0}}^{W_{0}^{1,p_{0}}\cap L^{p_{1}}\left( \Omega \right) }\left(
0\right) \backslash \left\{ 0\right\} \right. \right\} .
\end{equation*}

Consequently, $\lambda $ will be a function of $u$ as $\lambda _{1}=\lambda
\left( u_{1}\right) $, here $u_{1}$ is the function on which is attained the
infimum of the above-mentioned expression. (Section 3 has more examples.)

\begin{remark}
\label{R_1}Whence implies the main parts of operators $F$ and $G$ must
possess a common degree of nonlinearity in order to $\lambda _{0}$ couldn't
be a function of $x$. From Theorem \ref{Th_1} follows that defined in such
way number $\lambda _{0}$ is the number that was assumed exists in the
conditions of this theorem. Moreover, the finding number allows us in
mentioned theorem to state the existence of solutions for each $\lambda
:0\leq \left\vert \lambda \right\vert \leq \lambda _{0}$ if the element on
the right side is from a determined subsets. Thus, we can find the spectrum
in the sense Definition \ref{D_S} using the proof of the above result, if
the main parts of operators $F$ and $G$ have a common degree of nonlinearity.
\end{remark}

The spectrum of an operator usually must be to characterize the operator,
but the determined here number $\lambda $ can't of this. Therefore we will
fit the above question differently unlike the above-mentioned works.

In the beginning, we will study the case when the defined in (\ref{2.5})
number $\lambda $ is independent of $x$. As was explained above from the
existence of the fixed point $x_{1}$ of the operator $G\circ F^{-1}$ does
not follow that $x_{1}$ is the eigenvector and the number $\lambda _{1}$ is
the eigenvalue for this operator. Consequently, the existence of the inverse
operator of $F$ or $G$ also does not resolve this question. All of these
shows that for the study of the posed question is necessary some relations
between the operators $F$ and $G$.

So, we assume one of the following conditions are fulfilled: (1) $F$ and $G$
are homogeneous with common exponent $p>0$ or a common function $\phi \left(
\cdot \right) $, i.e. $F\left( \mu x\right) \equiv \mu ^{p}F\left( x\right) $%
, $G\left( \mu x\right) \equiv \mu ^{p}G\left( x\right) $ or $F\left( \mu
x\right) \equiv \phi \left( \mu \right) F\left( x\right) $, $G\left( \mu
x\right) \equiv \phi \left( \mu \right) G\left( x\right) $ for any $\mu >0$;
(2) Investigate the problem locally, i.e. study the problem on the closed
ball $B_{r}^{X}\left( 0\right) \subseteq D\left( f_{\lambda }\right) $ for
selected $r>0$ and to seek of $\lambda $ in the form $\lambda \equiv \lambda
\left( r\right) $. \ 

We start to study case (1), i.e. when $F$ and $G$ are homogeneous with
exponent $p>0$. Whence imply that (\ref{2.5}) defines a number $\lambda $
independent from $x$ hence if denote this minimum by $\lambda _{1}$ and the
element at which minimum is attained by $x_{1}$ then (\ref{2.5}) will
fulfill for $\forall x\in $ $\Gamma _{\lambda _{1}}\cap D\left( F\right) $.
Consequently, in this case, one can define $x_{1}$ as the first eigenvector
and $\lambda _{1}$ as the first eigenvalue of operator $F$ relatively to
operator $G$ (as in the linear case), or one can define as the fixed point $%
y_{1}=F\left( x_{1}\right) $ of operator $\lambda G\circ F^{-1}$.

Now let be the case (2). Then if $F$ and $G$ have of the different orders of
the homogeneous that are given by different functions, e.g. by polynomial
functions with exponents $p_{F}\neq p_{G}$ then possible 2 variants: \textit{%
(a) } $p_{F}>p_{G}$ and \textit{(b) }$p_{F}<p_{G}$.

If the case (a) occur then $F\left( x\right) =r^{p_{F}}F\left( \widetilde{x}%
\right) $ and $G\left( x\right) =r^{p_{G}}G\left( \widetilde{x}\right) $
since for any $x\in X_{0}$ one can write $x\equiv r\widetilde{x}\ $ where $%
\left\Vert x\right\Vert _{X_{0}}\equiv r$ and $\widetilde{x}=\frac{x}{r}\in
S_{1}^{X_{0}}\left( 0\right) \subset X_{0}$. Hence due to Theorem \ref{Th_3}
we get that $G$ only can be the perturbation of the operator $F$, therefore
this case not is essential. Let be the case (b). In this case, if there
exists such $\lambda _{0}$ and $x_{0}$ that $F\left( x_{0}\right) =\lambda
_{0}G\left( x_{0}\right) $ then 
\begin{equation*}
F\left( x_{0}\right) =r_{0}^{p_{F}}F\left( \widetilde{x}_{0}\right) ,\
G\left( x\right) =r_{0}^{p_{G}}G\left( \widetilde{x}_{0}\right)
\Longrightarrow r_{0}^{p_{F}}F\left( \widetilde{x}_{0}\right) =\lambda
_{0}r_{0}^{p_{F}}G\left( \widetilde{x}_{0}\right)
\end{equation*}%
\begin{equation*}
\Longrightarrow F\left( \widetilde{x}_{0}\right) =\lambda \left( \lambda
_{0},r_{0}\right) G\left( \widetilde{x}_{0}\right) \Longrightarrow \lambda
\left( \lambda _{0},r_{0}\right) =\lambda _{0}r_{0}^{p_{_{G}}-p_{_{F}}}
\end{equation*}%
holds. Hence follows, that if we change $x_{0}\equiv r_{0}\ \widetilde{x}%
_{0} $ to $x_{1}\equiv r_{1}\ \widetilde{x}_{0}$ then $\lambda $ will change
to $\lambda =\lambda _{0}r_{1}^{p_{_{G}}-p_{_{F}}}$. In other words from
here implies if $p_{F}\neq p_{G}$ then any existing number $\lambda $ will
depend on element $x\in X$, i.e. $\lambda =\lambda \left( r\right) $ on the
line $\left\{ x\in X\left\vert \ x=r\ \widetilde{x}_{0},\right. r\in 
\mathbb{R}
\right\} $. The previous discussion shows there are two variants either $%
p_{F}=p_{G}$ or $\lambda _{0}=\lambda _{0}\left( x_{0}\right) $ and will be
sufficient to investigate these cases. If to assume the operator $f$ is
linear and $g$ is an identical operator then firstly one needs to
investigate the number $\lambda $ belonging to the spectrum of the linear
operator $f$. Well-known in the case when $\lambda $ less than the first
eigenvalue (or less than of the $\inf \left\{ \left\vert \lambda \right\vert
\left\vert \ \lambda \in \sigma \left( f\right) \right. \right\} $) then one
can use the resolvent of the operator f for the study of the nonhomogeneous
equation.

So, here we will study the posed question mainly in the case when condition $%
p_{F}=p_{G}$ holds.

Consequently, the concept defined in the articles \cite{1, 2, 3, 11, 13, 18,
19, 22, 25, 27, 28} etc of the semilinear spectral set is special case of
the Definition \ref{D_S}, by virtue of (\ref{2.2}) and (\ref{2.3}).

\section{Some Application of General Results}

Consider the following problems 
\begin{equation}
-\nabla \circ \left( \left\vert \nabla u\right\vert ^{p-2}\nabla u\right)
-\lambda \left\vert u\right\vert ^{p_{0}-2}u\left\vert \nabla u\right\vert
^{p_{1}}=0,\quad u\left\vert _{\ \partial \Omega }\right. =0,\ \lambda \in 
\mathbb{C}
,  \label{3.1}
\end{equation}

\begin{equation}
-\nabla \circ \left( \left\vert u\right\vert ^{p-2}\nabla u\right) -\lambda
\left\vert u\right\vert ^{p_{0}-2}u=0,\quad u\left\vert _{\ \partial \Omega
}\right. =0,\ \lambda \in 
\mathbb{C}
,  \label{3.2}
\end{equation}%
where $\Omega \subset 
\mathbb{R}
^{n}$ is an open bounded domain with sufficiently smooth boundary $\partial
\Omega $, $n\geq 1$, $p_{0}+p_{1}=p$ and $\nabla \equiv \left(
D_{1},...,D_{n}\right) $. We denote this operator by $f_{0}$, which acts
from $W_{0}^{1.p}\left( \Omega \right) $ to $W^{-1,q}\left( \Omega \right) $%
. It is easy to see that $f_{0}:W_{0}^{1.p}\left( \Omega \right)
\longrightarrow W^{-1,q}\left( \Omega \right) $ is a continuous operator and
for any $u\in W_{0}^{1.p}\left( \Omega \right) $ 
\begin{equation*}
0=\left\langle f_{0}\left( u\right) ,u\right\rangle \equiv \left\langle
-\nabla \circ \left( \left\vert \nabla u\right\vert ^{p-2}\nabla u\right)
-\lambda \left\vert u\right\vert ^{p_{0}-2}u\left\vert \nabla u\right\vert
^{p_{1}},u\right\rangle =
\end{equation*}%
\begin{equation*}
\left\Vert \nabla u\right\Vert _{p}^{p}-\underset{\Omega }{\int }\lambda
\left\vert u\right\vert ^{p_{0}}\left\vert \nabla u\right\vert ^{p_{1}}dx\
\Longrightarrow \left\Vert \nabla u\right\Vert _{p}^{p}=\lambda \underset{%
\Omega }{\int }\left\vert u\right\vert ^{p_{0}}\left\vert \nabla
u\right\vert ^{p_{1}}dx\ 
\end{equation*}%
holds. Whence follows $\lambda \geq 0$ since both of these expressions are
positive.

(1) We will investigate of problem (\ref{3.1}) by using of the Theorem \ref%
{Th_1} or Theorem \ref{Th_3} but we interest to study the question on the
spectrum therefore here we will use Corollary \ref{C_2}. According to the
previous section, we can introduce the following denotations 
\begin{equation*}
F\left( u\right) =-\nabla \left( \left\vert \nabla u\right\vert ^{p-2}\nabla
u\right) ,\quad F:W_{0}^{1.p}\left( \Omega \right) \longrightarrow
W^{-1,q}\left( \Omega \right) ,
\end{equation*}%
\begin{equation*}
G\left( u\right) =\left\vert u\right\vert ^{p_{0}-2}u\left\vert \nabla
u\right\vert ^{p_{1}},\quad G:W_{0}^{1.p}\left( \Omega \right)
\longrightarrow W^{-1,q}\left( \Omega \right) .
\end{equation*}

So, it needs to seek the minimal value of $\lambda $ and function $%
u_{\lambda }\left( x\right) $ (if it exists) for which the equality 
\begin{equation*}
\left\Vert \nabla u\right\Vert _{p}^{p}=\lambda \underset{\Omega }{\int }%
\left\vert u\right\vert ^{p_{0}}\left\vert \nabla u\right\vert
^{p_{1}}dx,\quad u\in B_{1}^{W_{0}^{1.p}\left( \Omega \right) }\left(
0\right)
\end{equation*}%
or the equality 
\begin{equation*}
\lambda =\frac{\left\Vert \nabla u\right\Vert _{p}^{p}}{\underset{\Omega }{%
\int }\left\vert u\right\vert ^{p_{0}}\left\vert \nabla u\right\vert
^{p_{1}}dx}=\ \underset{\Omega }{\int }\left( \frac{\left\Vert \nabla
u\right\Vert _{p}}{\left\vert u\right\vert }\right) ^{p_{0}}\left( \frac{%
\left\Vert \nabla u\right\Vert _{p}}{\left\vert \nabla u\right\vert }\right)
^{p_{1}}dx
\end{equation*}%
holds. Consequently, we need to find the following number 
\begin{equation}
\lambda _{1}=\inf \left\{ \frac{\left\Vert \nabla u\right\Vert _{p}^{p}}{%
\underset{\Omega }{\int }\left\vert u\right\vert ^{p_{0}}\left\vert \nabla
u\right\vert ^{p_{1}}dx}\left\vert \ u\in B_{1}^{\overset{0}{W}\
^{1.p}}\left( 0\right) \right. \right\} .  \label{3.3a}
\end{equation}

It is clear that $\lambda _{1}$ exists and $\lambda _{1}>0$.

Whence follows 
\begin{equation}
\lambda _{1}\geq \left( \frac{\left\Vert \nabla u\right\Vert _{p}}{%
\left\Vert u\right\Vert _{p}}\right) ^{p_{0}}\Longrightarrow \lambda _{1}^{%
\frac{1}{p_{0}}}\geq \frac{\left\Vert \nabla u\right\Vert _{p}}{\left\Vert
u\right\Vert _{p}}  \label{3.3}
\end{equation}%
for any $u\in W_{0}^{1.p}\left( \Omega \right) $, $u\left( x\right) \neq 0$.

We denote by $\lambda _{p_{0},p_{1}}$ the first spectrum of posed problem
that one can define as 
\begin{equation}
\lambda _{p_{0},p_{1}}=\inf \left\{ \left\Vert \nabla u\right\Vert _{p}\left[
\underset{\Omega }{\int }\left\vert u\right\vert ^{p_{0}}\left\vert \nabla
u\right\vert ^{p_{1}}dx\right] ^{-\frac{1}{p}}\left\vert \ u\in
S_{1}^{W_{0}^{1.p}\left( \Omega \right) }\left( 0\right) \right. \right\} .
\label{3.4}
\end{equation}%
From (3.3) we obtain 
\begin{equation*}
\lambda _{p_{0},p_{1}}\geq \lambda _{1}^{\frac{1}{p_{0}}}=\inf \left\{ \frac{%
\left\Vert \nabla u\right\Vert _{p}}{\left\Vert u\right\Vert _{p}}\
\left\vert \ u\in W_{0}^{1.p}\left( \Omega \right) \right. \right\}
\end{equation*}%
that is well-known $\lambda _{1}^{\frac{1}{p_{0}}}=\lambda _{1}\left(
L_{p}\right) $ was defined as the first spectrum of $p-$Laplacian (see, e.g. 
\cite{23}) that is

\begin{equation*}
\lambda _{1}\left( -\Delta _{p}\right) =\inf \ \left\{ \frac{\left\Vert
\nabla u\right\Vert _{p}}{\left\Vert u\right\Vert _{p}}\ \left\vert \ u\in
W_{0}^{1.p}\left( \Omega \right) \right. \right\} ,
\end{equation*}%
consequently, this inequality shows that $\lambda _{p_{0},p_{1}}$ is
comparable with the spectrum $\lambda _{1}\left( -\Delta _{p}\right) $ of
the $p-$Laplacian, i.e. $\lambda _{p_{0},p_{1}}$ satisfy the inequality $%
\lambda _{p_{0},p_{1}}\geq $ $\lambda _{1}\left( -\Delta _{p}\right) $.

(2) Now we will consider of the problem (\ref{3.2}) then it is get 
\begin{equation*}
\underset{\Omega }{\int }\left\vert u\right\vert ^{p-2}\left\vert \nabla
u\right\vert ^{2}dx=\lambda \underset{\Omega }{\int }\left\vert u\right\vert
^{p_{0}}dx
\end{equation*}%
or 
\begin{equation*}
\frac{4}{p^{2}}\left\Vert \nabla \left( \left\vert u\right\vert ^{\frac{p-2}{%
2}}u\right) \right\Vert _{2}^{2}=\lambda \left\Vert \left\vert u\right\vert
^{\frac{p_{0}-2}{2}}u\right\Vert _{2}^{2},
\end{equation*}%
here if assume $p_{0}=p$ and $\left\vert u\right\vert ^{\frac{p-2}{2}%
}u\equiv v$ then we get 
\begin{equation*}
\frac{4}{p^{2}}\left\Vert \nabla v\right\Vert _{2}^{2}=\frac{4}{p^{2}}%
\left\Vert \nabla \left( \left\vert u\right\vert ^{\frac{p-2}{2}}u\right)
\right\Vert _{2}^{2}=\lambda \left\Vert \left\vert u\right\vert ^{\frac{p-2}{%
2}}u\right\Vert _{2}^{2}=\lambda \left\Vert v\right\Vert _{2}^{2}.
\end{equation*}

Whence follows, that the first eigenvalue $\lambda _{1}\left( p\right) $ of
the operator $-\nabla \cdot \left( \left\vert u\right\vert ^{p-2}\nabla
u\right) $ relative to the operator $\left\vert u\right\vert ^{p-2}u$ can be
defined using the first eigenvalue $\lambda _{1}\left( -\Delta \right) $ of
Laplacian that is defined by expression 
\begin{equation*}
\lambda _{1}\left( -\Delta \right) =\inf \ \left\{ \frac{\left\Vert \nabla
v\right\Vert _{2}}{\left\Vert v\right\Vert _{2}}\ \left\vert \ v\in
W_{0}^{1.2}\left( \Omega \right) \right. \right\} .
\end{equation*}%
Consequently, the first eigenvalue $\lambda _{1}\left( p\right) $ of the
operator $-\nabla \cdot \left( \left\vert u\right\vert ^{p-2}\nabla u\right) 
$ relative to the operator $\left\vert u\right\vert ^{p-2}u$ (in appropriate
to problem (\ref{3.2})) one can define by the equality $\lambda _{1}\left(
p\right) =\left( \frac{2}{p}\lambda _{1}\left( -\Delta \right) \right) ^{2}$.

Thus we get

\begin{proposition}
\label{Pr_1} (1) Let $\Delta _{p}$ is the $p-$Laplacian operator with
homogeneous boundary conditions on the bounded domain of $R^{n}$ with smooth
boundary $\partial \Omega $ and $p_{0}+p_{1}=p$. Then the first eigenvalue $%
\lambda _{p_{0}p_{1}}$ for the operator $-\Delta _{p}$ relative to the
operator $G:G\left( u\right) \equiv \left\vert u\right\vert
^{p_{0}-2}u\left\vert \nabla u\right\vert ^{p_{1}}$ exists and be defined
with equality (\ref{3.4}).

(2) If the $F$ and $G$ are operators generated by the problem (\ref{3.2})
and $p_{0}=p$ then the first eigenvalue of the operator $F$ relative to the
operator $G$ is determined as $\lambda _{1}\left( p\right) =\left( \frac{2}{p%
}\lambda _{1}\left( L\right) \right) ^{2}$, where $\lambda _{1}\left(
-\Delta \right) $ is the first eigenvalue of Laplacian.
\end{proposition}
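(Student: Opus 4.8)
The plan is to treat the two assertions separately, the bulk of each computation being already in place just before the statement; the real task is to organize it and to secure the attainment step. For part~(1) I place problem~(\ref{3.1}) in the framework of Section~2 with $F(u)=-\nabla\circ(|\nabla u|^{p-2}\nabla u)$ and $G(u)=|u|^{p_{0}-2}u\,|\nabla u|^{p_{1}}$, both acting $W_{0}^{1,p}(\Omega)\to W^{-1,q}(\Omega)$ and with $F\succ G$. The structural point is that $F$ and $G$ are positively homogeneous of the common degree $p-1$: $F(\mu u)=\mu^{p-1}F(u)$ and $G(\mu u)=\mu^{(p_{0}-2)+1+p_{1}}G(u)=\mu^{p-1}G(u)$. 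Hence we are in ``case~(1)'' of Section~2 and, by the discussion around Remark~\ref{R_1}, the Rayleigh-type quotient $\langle F(u),u\rangle/\langle G(u),u\rangle=\|\nabla u\|_{p}^{p}/\int_{\Omega}|u|^{p_{0}}|\nabla u|^{p_{1}}\,dx$ does not depend on $u$ along rays, so the candidate first eigenvalue is precisely the infimum $\lambda_{1}$ of (\ref{3.3a}). Testing (\ref{3.1}) with $u$ yields $\|\nabla u\|_{p}^{p}=\lambda\int_{\Omega}|u|^{p_{0}}|\nabla u|^{p_{1}}\,dx$, which forces $\lambda\ge0$ and ties the eigenvalue to this ratio. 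Finiteness of $\lambda_{1}$ is immediate; positivity follows from H\"older's inequality with conjugate exponents $p/p_{0}$ and $p/p_{1}$, giving $\int_{\Omega}|u|^{p_{0}}|\nabla u|^{p_{1}}\,dx\le\|u\|_{p}^{p_{0}}\|\nabla u\|_{p}^{p_{1}}$, hence (\ref{3.3}), together with the Poincar\'e inequality $\|\nabla u\|_{p}\ge c_{0}\|u\|_{p}$. Since the functional in (\ref{3.4}) is $0$-homogeneous, its infimum over the closed unit ball equals the one over $S_{1}^{W_{0}^{1,p}(\Omega)}(0)$, which is exactly (\ref{3.4}); and (\ref{3.3}) gives the comparison $\lambda_{p_{0},p_{1}}\ge\lambda_{1}(-\Delta_{p})$.

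It remains to show that $\lambda_{p_{0},p_{1}}$ is attained, i.e.\ is genuinely a first eigenvalue. For this I apply Corollary~\ref{C_2} (equivalently Theorem~\ref{Th_3}) with $\lambda_{1}=\lambda_{p_{0},p_{1}}$ and $x_{0}=0$: condition~1) is the boundedness and continuity of $-\Delta_{p}$ and $G$ together with $F\succ G$; condition~2$'$) comes from the coercivity $\langle F(u),u\rangle=\|\nabla u\|_{p}^{p}$ and the homogeneous bound on $G$; condition~3$'$) is produced by the strict monotonicity of $-\Delta_{p}$ on $W_{0}^{1,p}(\Omega)$ together with the compact embedding $W_{0}^{1,p}(\Omega)\hookrightarrow\hookrightarrow L^{p_{0}}(\Omega)$, which yield the functions $\Phi_{\lambda_{1}},\varphi_{\lambda_{1}}$ of that condition --- equivalently, $-\Delta_{p}$ has the \textrm{P-property} of Definition~\ref{D_3}. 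An equivalent route is the direct method on a minimizing sequence, using weak lower semicontinuity of $u\mapsto\|\nabla u\|_{p}^{p}$ and strong $L^{p_{0}}$-convergence. Either way the infimum (\ref{3.4}) is achieved by some $u_{\lambda}\ne0$, a weak solution of (\ref{3.1}) with $\lambda=\lambda_{p_{0},p_{1}}$.

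For part~(2) the device is the substitution $v=|u|^{(p-2)/2}u$. Testing (\ref{3.2}) with $u$ gives $\int_{\Omega}|u|^{p-2}|\nabla u|^{2}\,dx=\lambda\int_{\Omega}|u|^{p_{0}}\,dx$. By the chain rule $\nabla v=\tfrac{p}{2}|u|^{(p-2)/2}\nabla u$, so $|u|^{p-2}|\nabla u|^{2}=\tfrac{4}{p^{2}}|\nabla v|^{2}$; and when $p_{0}=p$ one has $|v|^{2}=|u|^{p-2}|u|^{2}=|u|^{p}=|u|^{p_{0}}$, whence the identity becomes $\tfrac{4}{p^{2}}\|\nabla v\|_{2}^{2}=\lambda\|v\|_{2}^{2}$, i.e.\ $\lambda=\tfrac{4}{p^{2}}\|\nabla v\|_{2}^{2}/\|v\|_{2}^{2}$. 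The correspondence $u\mapsto v$, with formal inverse $u=|v|^{(2-p)/p}v$, identifies the admissible functions for problem~(\ref{3.2}) in $W_{0}^{1,p}(\Omega)$ with those for the Dirichlet Laplacian in $W_{0}^{1,2}(\Omega)$, so taking the infimum over $u\ne0$ converts the right-hand side into $\tfrac{4}{p^{2}}\bigl(\inf_{v}\|\nabla v\|_{2}/\|v\|_{2}\bigr)^{2}=\tfrac{4}{p^{2}}\lambda_{1}(-\Delta)^{2}$. Hence $\lambda_{1}(p)=\bigl(\tfrac{2}{p}\lambda_{1}(-\Delta)\bigr)^{2}$, attained at $u_{1}=|\phi_{1}|^{(2-p)/p}\phi_{1}$ with $\phi_{1}$ the first Dirichlet eigenfunction of $-\Delta$.

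The step I expect to be the genuine obstacle is the attainment in part~(1): the denominator $\int_{\Omega}|u|^{p_{0}}|\nabla u|^{p_{1}}\,dx$ involves $\nabla u$ and is \emph{not} weakly continuous on $W_{0}^{1,p}(\Omega)$, so the naive direct method does not close along a minimizing sequence, and one genuinely needs the compactness/structure built into Theorem~\ref{Th_1} (condition (iii) or the \textrm{P-property}) to pass to the limit; concretely, verifying conditions~1), 2$'$), 3$'$) of Corollary~\ref{C_2} for this pair $(F,G)$ --- especially extracting $\Phi_{\lambda_{1}},\varphi_{\lambda_{1}}$ from the monotonicity of $-\Delta_{p}$ and the compact Sobolev embedding --- is the technical heart. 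In part~(2) the remaining care is only to check that the change of variables $v=|u|^{(p-2)/2}u$ legitimately matches the function classes of the two problems; granted that, part~(2) reduces to the displayed computation.
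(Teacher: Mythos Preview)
Your argument is essentially the paper's own: Proposition~\ref{Pr_1} is stated immediately after the computations you reproduce (testing (\ref{3.1}) and (\ref{3.2}) with $u$, H\"older to obtain (\ref{3.3}) and the comparison with $\lambda_1(-\Delta_p)$, and the substitution $v=|u|^{(p-2)/2}u$ for part~(2)), with the phrase ``Thus we get'' serving as the whole proof. The attainment step you single out as the obstacle is not addressed by the paper at all --- it simply asserts ``It is clear that $\lambda_1$ exists and $\lambda_1>0$'' --- so your appeal to Corollary~\ref{C_2} and the \textrm{P-property} is additional care beyond what the paper supplies, not a different route.
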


\begin{remark}
\label{R_2}It should be noted by the same way one can define the spectrum of
operator $\Delta _{p}$ (and also of the operator $F:F\left( u\right) \equiv 
\underset{i=1}{\overset{n}{\sum }}D_{i}\left( \left\vert D_{i}u\right\vert
^{p-2}D_{i}u\right) $) relative to operator $G_{0}:G_{0}\left( u\right)
\equiv \underset{i=1}{\overset{n}{\sum }}\left\vert u\right\vert
^{p_{0}-2}u\left\vert D_{i}u\right\vert ^{p_{1}}$.
\end{remark}

Now, using the previous results we will investigate the solvability of the
problem with the following equation in the case when the homogeneous
boundary condition 
\begin{equation}
f_{\lambda }\left( u\right) \equiv -\nabla \left( \left\vert \nabla
u\right\vert ^{p-2}\nabla u\right) -\lambda \left\vert u\right\vert
^{p_{0}-2}u\left\vert \nabla u\right\vert ^{p_{1}}=h\left( x\right)
\label{3.5}
\end{equation}%
on the open bounded domain $\Omega \subset R^{n}$ with smooth boundary $%
\partial \Omega $. For this problem the following result holds.

\begin{theorem}
\label{Th_4}Let numbers $p$, $p_{0}$, $p_{1}\geq 0$ are such that $%
p_{0}+p_{1}=p\geq 2$, $\lambda _{p_{0},p_{1}}$ is the number defined in (\ref%
{3.4}). Then if $\lambda <\lambda _{p_{0},p_{1}}$ then the posed problem is
solvable in $W_{0}^{1.p}\left( \Omega \right) $ for each $h\in
W^{-1,q}\left( \Omega \right) $.
\end{theorem}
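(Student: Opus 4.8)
The plan is to verify the hypotheses of Theorem~\ref{Th_3} (equivalently Theorem~\ref{Th_1}) for the operator $f_\lambda(u) \equiv F(u) - \lambda G(u)$ with $F(u) = -\nabla\circ(|\nabla u|^{p-2}\nabla u)$ and $G(u) = |u|^{p_0-2}u\,|\nabla u|^{p_1}$ acting from $X = W_0^{1,p}(\Omega)$ to $Y = W^{-1,q}(\Omega)$, $q = p'$, on a ball $B_{r_0}^X(0)$, under the assumption $\lambda < \lambda_{p_0,p_1}$. First I would record the boundedness and continuity (condition~1): $F$ is the standard $p$-Laplacian, bounded and continuous (in fact monotone) from $W_0^{1,p}$ to $W^{-1,q}$; $G$ maps bounded sets of $W_0^{1,p}$ into bounded sets of $W^{-1,q}$ by Hölder since $p_0 + p_1 = p$, and $G$ is continuous because the Nemytskii-type map $u \mapsto |u|^{p_0-2}u\,|\nabla u|^{p_1}$ is continuous from $W_0^{1,p}$ into $L^q$. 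The order relation $F \succ G$ holds because $\mathcal F_F(Z)$ controls $\|\nabla u\|$ while $\mathcal F_G(Z)$ needs only a lower-order mixed norm, and $F(0)=G(0)=0$.

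Next I would establish the coercivity-type condition~2. Pairing $f_\lambda(u)$ with $u$ itself (taking $g$ to be essentially the duality pairing against the normalized element, or more precisely using $y^\ast$ dual to $u$) gives
\begin{equation*}
\langle f_\lambda(u), u\rangle = \|\nabla u\|_p^p - \lambda \int_\Omega |u|^{p_0}|\nabla u|^{p_1}\,dx.
\end{equation*}
By the definition~(\ref{3.4}) of $\lambda_{p_0,p_1}$, for every $u \in W_0^{1,p}(\Omega)\setminus\{0\}$ one has $\int_\Omega |u|^{p_0}|\nabla u|^{p_1}\,dx \le \lambda_{p_0,p_1}^{-p}\,\|\nabla u\|_p^p$, hence
\begin{equation*}
\langle f_\lambda(u), u\rangle \ge \left(1 - \frac{\lambda}{\lambda_{p_0,p_1}^{p}}\right)\|\nabla u\|_p^p \ge \left(1 - \frac{|\lambda|}{\lambda_{p_0,p_1}^{p}}\right)\|\nabla u\|_p^p =: \nu_\lambda(\|u\|_X)
\end{equation*}
when $\lambda \ge 0$; the claimed sign analysis earlier in the paper ($\lambda \ge 0$ automatically) lets one dispense with the negative case, or one treats $|\lambda| < \lambda_{p_0,p_1}^p$ uniformly. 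Since $\lambda < \lambda_{p_0,p_1}$ (and one should note $\lambda_{p_0,p_1} \le \lambda_{p_0,p_1}^p$ or rescale appropriately — a point to watch), the coefficient is positive, $\nu_\lambda(r) = c_\lambda r^p$ is continuous, nondecreasing, and $\nu_\lambda(r_0) \ge \delta_0 > 0$ on any fixed sphere, with $\delta_{0\lambda}\searrow 0$ as $\lambda \nearrow \lambda_{p_0,p_1}$, exactly as condition~2 demands. Dividing by $\|u\|_X$ to normalize against a unit dual element $y^\ast$ only rescales $\nu_\lambda$.

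Then I would verify condition~3, the local separation estimate $\|f_\lambda(x_1) - f_\lambda(x_2)\|_Y \ge \varphi_\lambda(\|x_1 - x_2\|_X) + \psi_\lambda(\|x_1-x_2\|_Z)$. For the $p$-Laplacian part $F$, the classical monotonicity inequalities for $|\xi|^{p-2}\xi$ give, for $p \ge 2$, a bound of the form $\langle F(x_1)-F(x_2), x_1 - x_2\rangle \ge c\|\nabla(x_1-x_2)\|_p^p$, hence $\|F(x_1)-F(x_2)\|_Y \ge c\|x_1-x_2\|_X^{p-1}$ (up to the usual care with the dual norm); the perturbation $\lambda G$ is of lower order and, choosing $Z = L^p(\Omega)$ with $W_0^{1,p} \hookrightarrow\hookrightarrow L^p$ compact, its oscillation is absorbed into the $\psi_\lambda$ term via the compact embedding and continuity of $G$, so $\varphi_\lambda(\tau) = c\tau^{p-1}$ vanishes iff $\tau = 0$ and $\psi_\lambda(0,\cdot)=0$. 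With conditions~1--3 in hand, Theorem~\ref{Th_3} yields solvability of (\ref{3.5}) for all $\widetilde y$ in the dense absorbing set $V_\lambda$, and in particular $B_{\delta_0}^Y(0) \subseteq f_\lambda(B_{r_0}^X(0))$; a scaling/homogeneity argument (the principal parts are $p$-homogeneous) then removes the restriction to a small ball and delivers solvability for every $h \in W^{-1,q}(\Omega)$.

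The main obstacle I anticipate is condition~3 together with the exact matching of the constant $\lambda_{p_0,p_1}$ to the coercivity threshold. The separation estimate for $f_\lambda$ is delicate because $G$ involves $|\nabla u|^{p_1}$, which is \emph{not} a lower-order term in the naive sense — it carries a full gradient power — so showing that the difference $G(x_1)-G(x_2)$ is genuinely controlled by a compact-embedding norm plus a small multiple of the $W_0^{1,p}$ norm requires a careful splitting (e.g. freezing one argument, using local smallness of $\varepsilon$-neighborhoods, and the fact that $p_0 > 0$ so the $|u|^{p_0}$ factor can be made small in $L^\infty_{loc}$-type norms on small balls). The second subtlety is bookkeeping with the definition~(\ref{3.4}): one must be careful whether the relevant comparison is $\lambda < \lambda_{p_0,p_1}$ or $\lambda < \lambda_{p_0,p_1}^{\,p}$, i.e.\ one needs the inequality $\int_\Omega |u|^{p_0}|\nabla u|^{p_1}\,dx \le \lambda_{p_0,p_1}^{-p}\|\nabla u\|_p^p$ to be sharp and correctly normalized on the unit sphere, so that the coercivity coefficient $1 - \lambda/\lambda_{p_0,p_1}^p$ is positive precisely under the stated hypothesis.
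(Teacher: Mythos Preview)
Your proposal is correct and follows essentially the same route as the paper: apply Corollary~\ref{C_1} (Theorem~\ref{Th_1}) by checking (i) directly, (ii) via the pairing $\langle f_\lambda(u),u\rangle = \|\nabla u\|_p^p - \lambda\int|u|^{p_0}|\nabla u|^{p_1}$ together with the definition of $\lambda_{p_0,p_1}$, and (iii) via the $p$-Laplacian monotonicity bound $\langle F(u)-F(v),u-v\rangle \ge c_0\|\nabla(u-v)\|_p^p$ combined with a splitting of $G(u)-G(v)$ and Young's inequality to push the remainder into the compact $L^p$-norm. You even anticipate the two delicate points the paper handles only loosely --- the $\lambda_{p_0,p_1}$ versus $\lambda_{p_0,p_1}^{\,p}$ normalization in (\ref{3.4}) and the fact that $G$ carries a genuine gradient factor --- so your plan matches the paper's argument step for step.
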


\begin{proof}
Let 
\begin{equation*}
f_{\lambda }\left( u\right) \equiv -\nabla \left( \left\vert \nabla
u\right\vert ^{p-2}\nabla u\right) -\lambda \left\vert u\right\vert
^{p_{0}-2}u\left\vert \nabla u\right\vert ^{p_{1}}
\end{equation*}%
is the operator generated by the posed problem for (\ref{3.5}) acts as $%
f_{\lambda }:X\longrightarrow Y$, where $X\equiv W_{0}^{1.p}\left( \Omega
\right) $, $Y\equiv W^{-1,q}\left( \Omega \right) $ according to the above
conditions. For the proof the solvability of this problem we will use of
Corollary \ref{C_1}.

Whence follows that the inequality \ 
\begin{equation*}
\left\langle f_{\lambda }\left( u\right) ,u\right\rangle \equiv \left\Vert
\nabla u\right\Vert _{p}^{p}-\lambda \underset{\Omega }{\int }\left\vert
u\right\vert ^{p_{0}}\left\vert \nabla u\right\vert ^{p_{1}}dx\geq
\end{equation*}%
\begin{equation*}
\left\Vert \nabla u\right\Vert _{p}^{p}-\lambda \left\Vert u\right\Vert
_{p}^{p_{0}}\left\Vert \nabla u\right\Vert _{p}^{p_{1}}=\left\Vert \nabla
u\right\Vert _{p}^{p_{1}}\left( \left\Vert \nabla u\right\Vert
_{p}^{p_{0}}-\lambda \left\Vert u\right\Vert _{p}^{p_{0}}\right) =
\end{equation*}%
\begin{equation*}
\left\Vert \nabla u\right\Vert _{p}^{p}\left( 1-\frac{\lambda }{\lambda
_{p_{0},p_{1}}}\right) =\lambda _{p_{0},p_{1}}^{-1}\left( \lambda
_{p_{0},p_{1}}-\lambda \right) \left\Vert \nabla u\right\Vert _{p}^{p}
\end{equation*}%
holds for any $u\in W_{0}^{1.p}\left( \Omega \right) $ under conditions of
Theorem \ref{Th_4}.

Consequently, if $\lambda <\lambda _{p_{0},p_{1}}$ then $f_{\lambda }$
satisfies the condition \textit{(ii)} of the Theorem \ref{Th_1}, moreover it
is fulfilled for $x_{0}=0$ and $g\equiv Id$. The realization of the
condition (i) of Theorem \ref{Th_1} for $f_{\lambda }$ is obvious.

We provide some inequalities, which show the fulfillment of the condition 
\textit{(iii) }of the Theorem \ref{Th_1} (Corollary \ref{C_1}) for this
problem. It isn't difficult to see 
\begin{equation*}
\left\langle f\left( u\right) -f\left( v\right) ,u-v\right\rangle \equiv
\left\langle \left( \left\vert \nabla u\right\vert ^{p-2}\nabla u-\left\vert
\nabla v\right\vert ^{p-2}\nabla v\right) ,\nabla \left( u-v\right)
\right\rangle -
\end{equation*}%
\begin{equation*}
\lambda \left\langle \left( \left\vert u\right\vert ^{p_{0}-2}\left\vert
\nabla u\right\vert ^{p_{1}}u-\left\vert v\right\vert ^{p_{0}-2}\left\vert
\nabla v\right\vert ^{p_{1}}v\right) ,u-v\right\rangle \geq c_{0}\left\Vert
\nabla \left( u-v\right) \right\Vert _{p}^{p}-
\end{equation*}%
\begin{equation*}
\lambda \left\langle \left\vert u\right\vert ^{p_{0}-2}u\left( \left\vert
\nabla u\right\vert ^{p_{1}}-\left\vert \nabla v\right\vert ^{p_{1}}\right)
,u-v\right\rangle -\lambda \left\langle \left\vert \nabla v\right\vert
^{p_{1}}\left( \left\vert u\right\vert ^{p_{0}-2}u-\left\vert v\right\vert
^{p_{0}-2}v\right) ,u-v\right\rangle
\end{equation*}%
hold for any $u,v\in W_{0}^{1.p}\left( \Omega \right) $. Here the second
term of the right side one can estimate as 
\begin{equation*}
\left\vert \left\langle \left\vert u\right\vert ^{p_{0}-2}u\left( \left\vert
\nabla u\right\vert ^{p_{1}}-\left\vert \nabla v\right\vert ^{p_{1}}\right)
,u-v\right\rangle \right\vert \leq c_{1}\left\langle \left\vert u\right\vert
^{p_{0}-1}\left\vert \nabla \widetilde{u}\right\vert ^{p_{1}-1}\left\vert
\nabla u-\nabla v\right\vert ,\left\vert u-v\right\vert \right\rangle \leq
\end{equation*}%
\begin{equation*}
\varepsilon \left\Vert \nabla \left( u-v\right) \right\Vert _{p}^{p}+C\left(
\varepsilon \right) \left\Vert u\right\Vert _{p}^{\left( p_{0}-1\right)
p^{\prime }}\left\Vert \nabla \widetilde{u}\right\Vert _{p}^{\left(
p_{1}-1\right) p^{\prime }}\left\Vert u-v\right\Vert _{p}^{p^{\prime }},\
p^{\prime }=\frac{p}{p-1}.
\end{equation*}%
Thus we get that all of the conditions of Theorem \ref{Th_1} (the case of
Corollary \ref{C_1}) are fulfilled for the problem (\ref{3.5}).
Consequently, applying Theorem \ref{Th_1} we get the correctness of Theorem %
\ref{Th_4}.
\end{proof}

\section{Fully Nonlinear Operator}

Now we will study the question on the existence of the spectrum of the fully
nonlinear operator. Let $X,Y,Z$ are the real Banach spaces, the inclusion $%
Y\subset Z^{\ast }$is continuous and dense, where $Z^{\ast }$ is the dual
space of $Z$. Let $L:D\left( L\right) \subseteq X\longrightarrow Y$ is
linear operator, $D\left( L\right) $ is dense in $X$; $f:D\left( f\right)
\subseteq Y\longrightarrow Z$ and $g:X\subseteq D\left( f\right)
\longrightarrow Z$ are nonlinear operators. Assume that $L\left( D\left(
L\right) \right) \subseteq $ $D\left( f\right) $.

We wish define the spectrum of the operator $f\circ L:D\left( L\right)
\subseteq X\longrightarrow Z$ with respect to the operator $g$, where $%
D\left( L\right) \subseteq D\left( g\right) $. (We should to noted the
nonlinearity of operator $g$ depends on the natures of nonlinearity of the
operator $f$.)

We will consider problem

\begin{equation}
f\left( Lx\right) -\lambda g\left( x\right) =h,\quad h\in Z,\   \label{4.1a}
\end{equation}%
where $\lambda \in \mathbb{%
\mathbb{R}
}$ be a parameter and $h$ be an element of $Z$.

We will investigate two questions:

(a) Do can define the spectrum of operator $f\circ L$ with respect to
operator $g$?

(b) For which $\lambda \in 
\mathbb{R}
$ and $h\in Z$ equation (\ref{4.1a}) is solvable?

We should be noted our goal is to study the spectrum of the nonlinear
operator in the sense of Definition \ref{D_S}, but as above noted the subset
determined according to this definition contains numbers that depend on the
elements of the domain of the examined operator. Therefore, we will call a
number is the spectrum of the nonlinear operator if it characterizes the
examined operator as the theory of the linear operators. Whence follows
according to the above explanation that necessary to assume the identical
homogeneity of the nonlinearities of the operators $f$ and $g$.

So, we will study the following particular case that in some sense maybe to
explain the general case. To investigate the posed problems we will use the
general results of the articles \cite{29, 30}. Let $B_{r_{0}}^{X}\left(
0\right) \subset D\left( L\right) $, $r_{0}>0$ and consider the following
conditions:

1) There are such constants $c_{1},c_{2}>0$ that $c_{1}\left\Vert
x\right\Vert _{X}\geq \left\Vert Lx\right\Vert _{Y}\geq c_{2}\left\Vert
x\right\Vert _{X}$ for any $x\in D\left( L\right) \subseteq X$, moreover, $Y$
is reflexive space and the inverse to $L$ is a compact operator;

2) The operator $f\circ L$ is greater than the operator $g$, i.e. $f\circ
L\succ g$ ; $f$ and $g$ as the functions are continuous and satisfy the
following conditions: $f\left( t\right) \cdot t>0$ for $\forall t\in
R\setminus \left\{ 0\right\} $; $f\left( 0\right) =0$, $g\left( 0\right) =0$;

3) There is such number $\lambda _{0}>0$ that for each $z^{\ast }\in
S_{1}^{Z^{\ast }}\left( 0\right) $ there exist $x\left( z^{\ast }\right) \in
S_{r}^{X}\left( 0\right) $, $0\leq r\leq r_{0}$ such that the following
inequality 
\begin{equation*}
\left\langle f\left( Lx\right) -\lambda g\left( x\right) ,z^{\ast
}\right\rangle \geq \nu \left( \left\Vert Lx\right\Vert _{Y},\lambda \right)
\end{equation*}%
holds for $\forall \lambda :\left\vert \lambda \right\vert <\lambda _{0}$,
where $\nu :R_{+}\longmapsto R$ is a continuous function and there exists $%
\delta _{0}\left( \lambda \right) >0$ such that $\nu \left( t,\lambda
\right) \geq \delta _{0}\left( \lambda \right) $ for $\forall t=\left\Vert
Lx\right\Vert _{Y}$ when the variable $x$ run over the sphere $%
S_{r_{0}}^{X}\left( 0\right) $;

4) There is such $\varepsilon _{0}>0$ that for $\varepsilon \geq \varepsilon
_{0}>0$ and a. e. $x\in B_{r_{0}}^{X}\left( 0\right) \subseteq X$ there
exists a neighborhood $U_{\varepsilon }\left( x\right) $ such that for $%
\forall x_{1},x_{2}\in U_{\varepsilon }\left( x\right) $ 
\begin{equation*}
\left\langle f\left( Lx_{1}\right) -f\left( Lx_{2}\right)
,Lx_{1}-Lx_{2}\right\rangle \geq l\left( x_{1},x_{2}\right) \left\Vert
Lx_{1}-Lx_{2}\right\Vert _{Y}^{2}
\end{equation*}%
and 
\begin{equation*}
\left\Vert g\left( x_{1}\right) -g\left( x_{2}\right) \right\Vert _{Z}\leq
l_{1}\left( x_{1},x_{2}\right) \left\Vert x_{1}-x_{2}\right\Vert _{X}
\end{equation*}%
hold, where $l\left( x_{1},x_{2}\right) >0$, $l_{1}\left( x_{1},x_{2}\right)
>0$ are a bounded functionals in the sense of similar to Definition \ref{D_1}%
;

\begin{theorem}
\label{Th_5}Let conditions 1-4 be fulfilled and $D\left( L\right) =X$. Then
for each $h\in M\subset Z$ and $\forall \lambda :\left\vert \lambda
\right\vert \leq \lambda _{0}$ equation (\ref{4.1a}) solvable, where a
subset $M$ determined by the following inequation 
\begin{equation*}
M\left( \lambda \right) \subseteq \left\{ z\in Z\left\vert \ \left\vert
\left\langle z,z^{\ast }\right\rangle \right\vert \leq \nu \left( \left\Vert
Lx\right\Vert _{Y},\lambda \right) ,\ \right. \forall z^{\ast }\in
S_{1}^{Z^{\ast }}\left( 0\right) \right\} ,
\end{equation*}%
for each $x\in S_{r_{0}}^{X}\left( 0\right) $.
\end{theorem}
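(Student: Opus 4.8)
The plan is to derive the statement from the general solvability Theorem~\ref{Th_1} (in the form of Corollary~\ref{C_1}), applied, for each fixed real $\lambda $ with $\left\vert \lambda \right\vert \leq \lambda _{0}$, to the operator $f_{\lambda }\equiv f\circ L-\lambda g:X\longrightarrow Z$ on the closed ball $B_{r_{0}}^{X}\left( 0\right) \subset D\left( L\right) =X$ (equivalently, after the substitution $y=Lx$, to the equation $f\left( y\right) -\lambda g\left( L^{-1}y\right) =h$ on $L\left( B_{r_{0}}^{X}\left( 0\right) \right) \subseteq Y$, which is legitimate since by condition~1 $L$ is an isomorphism onto its image). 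It then suffices to verify the three hypotheses (i), (ii), (iii) of Theorem~\ref{Th_1}, with the target space $Y$ there played by $Z$, and afterwards to read off the conclusion that $f_{\lambda }\left( B_{r_{0}}^{X}\left( 0\right) \right) $ is a bodily subset of $Z$ which contains the absorbing/dense set $M$ of Theorem~\ref{Th_1}, and in particular the ball $B_{\delta _{0}\left( \lambda \right) }^{Z}\left( 0\right) $.

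The two ``soft'' hypotheses are immediate. Hypothesis (i): since $\left\Vert Lx\right\Vert _{Y}\leq c_{1}\left\Vert x\right\Vert _{X}$ by condition~1 and $f,g$ are continuous, the compositions $f\circ L$ and $g$ are bounded continuous maps of $B_{r_{0}}^{X}\left( 0\right) $ into $Z$, hence so is $f_{\lambda }$, and $f_{\lambda }\left( 0\right) =0$ by condition~2. Hypothesis (ii): this is exactly condition~3. One uses for the test mapping of Theorem~\ref{Th_1}~(ii) the assignment $x\left( z^{\ast }\right) \mapsto z^{\ast }$, $z^{\ast }\in S_{1}^{Z^{\ast }}\left( 0\right) $, supplied by condition~3; then $\left\langle f_{\lambda }\left( x\right) ,z^{\ast }\right\rangle \geq \nu \left( \left\Vert Lx\right\Vert _{Y},\lambda \right) $ along the spheres $S_{r}^{X}\left( 0\right) $ ($0<r\leq r_{0}$), and on $S_{r_{0}}^{X}\left( 0\right) $ the right-hand side is $\geq \delta _{0}\left( \lambda \right) >0$, which is precisely the coercivity required in (ii) (one may take $\nu _{\lambda }\left( r\right) :=\inf \left\{ \nu \left( \left\Vert Lx\right\Vert _{Y},\lambda \right) :\left\Vert x\right\Vert _{X}=r\right\} $ as the coercivity function).

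The substantive point is hypothesis (iii), and this is the only place where conditions~1, 2, 4 and the compactness of $L^{-1}$ are genuinely used. For $x_{1},x_{2}$ in a neighbourhood $U_{\varepsilon }\left( x_{0}\right) $ as in condition~4, I would pair the first inequality of condition~4 with the element $L\left( x_{1}-x_{2}\right) \in Y\subset Z^{\ast }$ and use the Cauchy--Schwarz inequality for the $\left( Z,Z^{\ast }\right) $-duality, getting $l\left( x_{1},x_{2}\right) \left\Vert L\left( x_{1}-x_{2}\right) \right\Vert _{Y}^{2}\leq \left\Vert f\left( Lx_{1}\right) -f\left( Lx_{2}\right) \right\Vert _{Z}\,\left\Vert L\left( x_{1}-x_{2}\right) \right\Vert _{Z^{\ast }}$; then, invoking the lower bound $\left\Vert L\left( x_{1}-x_{2}\right) \right\Vert _{Y}\geq c_{2}\left\Vert x_{1}-x_{2}\right\Vert _{X}$ and the continuity of the inclusion $Y\hookrightarrow Z^{\ast }$, this produces $\left\Vert f\left( Lx_{1}\right) -f\left( Lx_{2}\right) \right\Vert _{Z}\geq c\left( x_{0}\right) \left\Vert x_{1}-x_{2}\right\Vert _{X}$ with $c\left( x_{0}\right) >0$. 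Combined with the Lipschitz bound $\left\Vert g\left( x_{1}\right) -g\left( x_{2}\right) \right\Vert _{Z}\leq l_{1}\left( x_{1},x_{2}\right) \left\Vert x_{1}-x_{2}\right\Vert _{X}$ from condition~4, this gives $\left\Vert f_{\lambda }\left( x_{1}\right) -f_{\lambda }\left( x_{2}\right) \right\Vert _{Z}\geq \left( c\left( x_{0}\right) -\left\vert \lambda \right\vert l_{1}\left( x_{1},x_{2}\right) \right) \left\Vert x_{1}-x_{2}\right\Vert _{X}$, so one may take $\Phi \left( \tau ,x_{0},\varepsilon \right) :=\left( c\left( x_{0}\right) -\lambda _{0}\sup l_{1}\right) \tau $. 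When this coefficient is not visibly positive, the compactness of $L^{-1}$ is what saves the day: in the reformulation on $Y$, the perturbation $y\mapsto g\left( L^{-1}y\right) $ is a compact map (a continuous image of a compact one), hence can be absorbed into a lower-order term $\psi $ in the sense of Corollary~\ref{C_1}, or condition (iii) can be replaced by the \textrm{P-property} of $f_{\lambda }$ as in Notation~\ref{N_1}. I expect this reconciliation of the monotone-type estimate on $f\circ L$ with the genuine $\left\Vert \cdot \right\Vert _{Z}$-lower bound demanded by (iii), together with the control of the $\lambda g$-perturbation, to be the main obstacle.

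With (i)--(iii) established, Theorem~\ref{Th_1} (Corollary~\ref{C_1}) gives that $f_{\lambda }\left( B_{r_{0}}^{X}\left( 0\right) \right) $ is a bodily subset of $Z$ containing the set $M$ of Theorem~\ref{Th_1}; for the present choice of test mapping and the bound of condition~3, that set is contained in $M\left( \lambda \right) =\left\{ z\in Z:\left\vert \left\langle z,z^{\ast }\right\rangle \right\vert \leq \nu \left( \left\Vert Lx\right\Vert _{Y},\lambda \right) \ \text{for all }z^{\ast }\in S_{1}^{Z^{\ast }}\left( 0\right) ,\ x\in S_{r_{0}}^{X}\left( 0\right) \right\} $ (the modulus arising by running $z^{\ast }$ over $\pm z^{\ast }$), and it contains $B_{\delta _{0}\left( \lambda \right) }^{Z}\left( 0\right) $. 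Hence for every $h\in M\left( \lambda \right) $ and every $\lambda $ with $\left\vert \lambda \right\vert \leq \lambda _{0}$ there is $x\in B_{r_{0}}^{X}\left( 0\right) $ with $f\left( Lx\right) -\lambda g\left( x\right) =h$, i.e. (\ref{4.1a}) is solvable. At the endpoint $\left\vert \lambda \right\vert =\lambda _{0}$, where condition~3 yields coercivity only in the open range, one lets $\delta _{0}\left( \lambda _{0}\right) $, and hence $M\left( \lambda _{0}\right) $, degenerate, exactly as is done in Theorem~\ref{Th_3}.
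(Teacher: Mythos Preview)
Your proposal is correct and follows essentially the same route as the paper: both reduce the claim to Theorem~\ref{Th_1}/Corollary~\ref{C_1}, note that conditions (\textit{i}) and (\textit{ii}) are immediate from conditions~1--3, and concentrate on (\textit{iii}) by pairing with $L(x_{1}-x_{2})\in Y\subset Z^{\ast}$ to obtain the lower bound $\left\Vert F_{\lambda}(x_{1})-F_{\lambda}(x_{2})\right\Vert_{Z}\geq l(x_{1},x_{2})\left\Vert L(x_{1}-x_{2})\right\Vert_{Y}-\left\vert\lambda\right\vert l_{1}(x_{1},x_{2})\left\Vert x_{1}-x_{2}\right\Vert_{X}$. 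The only cosmetic difference is that the paper pairs the \emph{full} difference $F_{\lambda}(x_{1})-F_{\lambda}(x_{2})$ with $Lx_{1}-Lx_{2}$ and splits afterwards (its (\ref{4.2a})--(\ref{4.4a})), whereas you first bound $\left\Vert f(Lx_{1})-f(Lx_{2})\right\Vert_{Z}$ from below and then subtract the $g$-contribution via the triangle inequality; the resulting estimate is the same, and your explicit appeal to Corollary~\ref{C_1}/the \textrm{P-property} for handling the subtracted term via the compactness of $L^{-1}$ is in fact more transparent than the paper's terse reference to a ``condition (\textit{iv})''.
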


\begin{proof}
For the proof, it is sufficient to show, that the examined operator
satisfies all conditions of the general result of Subsection 2.1. It is
clear the operator $F_{\lambda }\left( x\right) \equiv f\left( Lx\right)
-\lambda g\left( x\right) $ satisfies conditions (\textit{i}),(\textit{ii})
of the general results with the $x_{0}=0$ according to conditions 1-4 (since 
$F_{\lambda }\left( 0\right) =0$). It remains to show fulfill of the
condition \textit{(iii)} and for this sufficiently to investigate the
following expression 
\begin{equation*}
\left\Vert F_{\lambda }\left( x_{1}\right) -F_{\lambda }\left( x_{2}\right)
\right\Vert _{Z}=\left\Vert \left( f\left( Lx_{1}\right) -\lambda g\left(
x_{1}\right) \right) -\left( f\left( Lx_{2}\right) -\lambda g\left(
x_{2}\right) \right) \right\Vert _{Z}.
\end{equation*}%
Now we will prove this expression satisfies the following inequality 
\begin{equation*}
\left\Vert F_{\lambda }\left( x_{1}\right) -F_{\lambda }\left( x_{2}\right)
\right\Vert _{Z}\geq c\left( l\left( x_{1},x_{2}\right) \left\Vert
x_{1}-x_{2}\right\Vert _{X},\lambda \right) -c_{1}\left( l_{1}\left(
x_{1},x_{2}\right) \left\Vert x_{1}-x_{2}\right\Vert _{X_{0}},\lambda
\right) .
\end{equation*}%
We set the following expression: 
\begin{equation*}
\left\langle F_{\lambda }\left( x_{1}\right) -F_{\lambda }\left(
x_{2}\right) ,Lx_{1}-Lx_{2}\right\rangle =\left\langle \left( f\left(
Lx_{1}\right) -f\left( Lx_{2}\right) \right) ,Lx_{1}-Lx_{2}\right\rangle -
\end{equation*}%
\begin{equation*}
-\lambda \left\langle g\left( x_{1}\right) -g\left( x_{2}\right)
,Lx_{1}-Lx_{2}\right\rangle
\end{equation*}%
that is defined correctly, since $F:D\left( L\right) \subseteq
X\longrightarrow Z$ and $L:D\left( L\right) \subseteq X\longrightarrow
Y\subset Z^{\ast }$, whence by carry out certain necessary operations and
considering the conditions of this section we get 
\begin{equation*}
\left\langle F_{\lambda }\left( x_{1}\right) -F_{\lambda }\left(
x_{2}\right) ,Lx_{1}-Lx_{2}\right\rangle =\left\langle f\left( Lx_{1}\right)
-f\left( Lx_{2}\right) ,Lx_{1}-Lx_{2}\right\rangle -
\end{equation*}%
\begin{equation*}
-\left\langle \lambda g\left( x_{1}\right) -\lambda g\left( x_{2}\right)
,Lx_{1}-Lx_{2}\right\rangle \geq l\left( x_{1},x_{2}\right) \left\Vert
Lx_{1}-Lx_{2}\right\Vert _{Y}^{2}-
\end{equation*}%
\begin{equation*}
-\left\vert \lambda \right\vert \left\Vert g\left( x_{1}\right) -g\left(
x_{2}\right) \right\Vert _{Z}\left\Vert L\left( x_{1}-x_{2}\right)
\right\Vert _{Y}\geq l\left( x_{1},x_{2}\right) \left\Vert L\left(
x_{1}-x_{2}\right) \right\Vert _{Y}^{2}-
\end{equation*}%
\begin{equation}
-\left\vert \lambda \right\vert \left\Vert g\left( x_{1}\right) -g\left(
x_{2}\right) \right\Vert _{Z}\left\Vert L\left( x_{1}-x_{2}\right)
\right\Vert _{Y}  \label{4.2a}
\end{equation}%
according to condition 1). Now again of taking into account condition 1) we
arrive to 
\begin{equation}
\left\vert \left\langle F_{\lambda }\left( x_{1}\right) -F_{\lambda }\left(
x_{2}\right) ,Lx_{1}-Lx_{2}\right\rangle \right\vert \leq \left\Vert
F_{\lambda }\left( x_{1}\right) -F_{\lambda }\left( x_{2}\right) \right\Vert
_{Z}\cdot \left\Vert Lx_{1}-Lx_{2}\right\Vert _{Y}.  \label{4.3a}
\end{equation}%
Thus using inequalities (\ref{4.2a}) and (\ref{4.3a}) and condition 4 we get
the following estimate 
\begin{equation}
\left\Vert F_{\lambda }\left( x_{1}\right) -F_{\lambda }\left( x_{2}\right)
\right\Vert _{Z}\geq l\left( x_{1},x_{2}\right) \left\Vert L\left(
x_{1}-x_{2}\right) \right\Vert _{Y}-\left\vert \lambda \right\vert
\left\Vert g\left( x_{1}\right) -g\left( x_{2}\right) \right\Vert _{Z}\geq
\label{4.4a}
\end{equation}%
\begin{equation*}
l\left( x_{1},x_{2}\right) \left\Vert L\left( x_{1}-x_{2}\right) \right\Vert
_{Y}-\left\vert \lambda \right\vert l_{1}\left( x_{1},x_{2}\right)
\left\Vert x_{1}-x_{2}\right\Vert _{X}.
\end{equation*}

So, under conditions of this theorem conditions (\textit{i})-(\textit{iii})
of the general theorem are fulfilled, and from the above inequality follows
the fulfillment of condition (\textit{iv}) ensure that the image of the $%
F\left( B_{r_{0}}^{X}\left( 0\right) \right) $ is closed. Consequently,
Theorem\ref{Th_5} is proved.
\end{proof}

\begin{remark}
\label{R_5} It needs to note the subset $M\left( \lambda \right) $ defined
in Theorem \ref{Th_5} decreases by increases of the number $\left\vert
\lambda \right\vert \nearrow \lambda _{0}$. The above-mentioned articles
actually seek the number $\lambda _{0}$ that the existence was assumed in
the previous theorem.
\end{remark}

Here we will study how one can determine such numbers that one can use as
the number $\lambda _{0}$, moreover, which are independent of the elements
from the domain of the examined operators. Really by such way, the founded
number can be called the first eigenvalue of the examined operator relative
to the different operator as in Definition \ref{D-S}.

In what follow we will use some results from the article Berger \cite{6}
(see, also \cite{34}). Therefore, we provide here results that are necessary
for us from article \cite{6}.

\begin{definition}
\label{D_B}(\cite{6})Let $A:X\longrightarrow X^{\ast }$ be a variational
operator. Then $A$ is of class $I$ if: \ 

(i) $A$ is bounded, i.e. $\left\Vert A\left( x\right) \right\Vert \leq \mu
\left( \left\Vert x\right\Vert \right) ;$

(ii) $A$ is continuous from the strong topology of $X$ to the weak topology
of $X^{\ast };$

(III) Oddness, i.e. $A\left( -x\right) =-A\left( x\right) $ ;

iv) Coerciveness, i.e. $\overset{1}{\underset{0}{\int }}\ \left\langle
A\left( sx\right) ,x\right\rangle ds\nearrow \infty $ when $\left\Vert
x\right\Vert _{X}\nearrow \infty $;

(V) Monotonicity: $\left\langle A\left( x_{1}\right) -A\left( x_{2}\right)
,x_{1}-x_{2}\right\rangle >0$, for any $x_{1},x_{2}\in X$.
\end{definition}

\begin{lemma}
\label{L_B1}(\cite{6})Let $A$ be as variational operator of class $I$, then
a $\partial A_{R}$ 
\begin{equation*}
\partial A_{R}=\left\{ x\in X\left\vert \ \overset{1}{\underset{0}{\int }}\
\left\langle A\left( sx\right) ,x\right\rangle ds=R\text{\/}\right. \right\}
\end{equation*}%
is a closed, bounded set in $X$. Furthermore $\left\Vert x\right\Vert
_{X}\geq k(R)>0$ and $\partial A_{R}$ is a weakly closed, bounded convex
set, where $k(R)$ is a constant independent of $x\in \partial A_{R}$.
\end{lemma}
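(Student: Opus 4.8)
The plan is to recognize $\partial A_{R}$ as a level set of the potential of $A$ and to read off all four assertions from convexity, coercivity and continuity of that potential. Since $A$ is a variational operator there is a functional $\Phi$ with $\Phi'=A$; normalizing $\Phi(0)=0$ and integrating along the segment $[0,x]$ gives
\[
\Phi(x)=\int_{0}^{1}\langle A(sx),x\rangle\,ds,\qquad x\in X,
\]
so that $\partial A_{R}=\Phi^{-1}(\{R\})$. First I would record the properties of $\Phi$. By (V) the gradient $A$ is strictly monotone, hence $\Phi$ is strictly convex; by (III) the operator $A$ is odd, hence $\Phi$ is even, and together with $\Phi(0)=0$ strict convexity forces $\Phi(x)>0$ for $x\neq 0$. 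Condition (iv) is exactly the statement that $\Phi(x)\to\infty$ as $\|x\|_{X}\to\infty$, i.e. $\Phi$ is coercive. Finally $\Phi$ is strongly continuous: if $x_{n}\to x$ in $X$ then $sx_{n}\to sx$ and $A(sx_{n})\rightharpoonup A(sx)$ by (ii) for each $s$, while $\|A(sx_{n})\|$ is bounded uniformly in $n$ and $s$ by (i) (with $\mu$ bounded on the compact interval $[0,\sup_{n}\|x_{n}\|_{X}]$); hence $\langle A(sx_{n}),x_{n}\rangle\to\langle A(sx),x\rangle$ for each $s$ with uniformly bounded integrands, and dominated convergence gives $\Phi(x_{n})\to\Phi(x)$.

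Given these properties three of the assertions are immediate. Closedness of $\partial A_{R}$ holds because it is the preimage of the closed set $\{R\}$ under the continuous map $\Phi$. Boundedness holds because $\partial A_{R}\subseteq\{\Phi\le R\}$ and a sequence in the latter with $\|x_{n}\|_{X}\to\infty$ would violate coercivity. Separation from the origin: since $\Phi$ is continuous at $0$ with $\Phi(0)=0<R$, there is $k(R)>0$ such that $\Phi(x)<R$ whenever $\|x\|_{X}<k(R)$; hence $\|x\|_{X}\ge k(R)$ for every $x\in\partial A_{R}$, and $k(R)$ depends only on $R$ through $\Phi$, not on the individual point.

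For the remaining ``weakly closed, bounded, convex'' assertion I would pass to the solid sublevel body $A_{R}:=\{x\in X\mid\Phi(x)\le R\}$, which is the natural set having $\partial A_{R}$ as its boundary. It is convex, being a sublevel set of the convex functional $\Phi$; it is strongly closed, by continuity of $\Phi$; hence it is weakly closed, since a convex strongly closed subset of a Banach space is weakly closed (Mazur's theorem / Hahn--Banach separation); and it is bounded, by the coercivity argument above. Since $\Phi$ is continuous and strictly convex with $\Phi(0)=0<R$, the open ball of radius $k(R)$ lies in the interior of $A_{R}$ and, examining $\Phi$ along rays, one sees that its topological boundary is precisely $\{\Phi(x)=R\}=\partial A_{R}$. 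Thus $\partial A_{R}$ is the boundary of a bounded, weakly closed, convex body, which is the form in which it is used in the subsequent constrained variational arguments.

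The one genuine subtlety is the weak topology: the bare level set $\{\Phi=R\}$ need not itself be weakly closed (for $\Phi(x)=\|x\|_{X}^{2}$ it is a sphere, which is weakly dense in the ball), so the weak closedness and convexity must be carried by the convex sublevel body $A_{R}$ rather than by $\partial A_{R}$ literally; Mazur's theorem is exactly what supplies this. A minor technical point, handled by the dominated convergence estimate above, is upgrading the mere demicontinuity (ii) of $A$ to honest continuity of the potential $\Phi$; everything else is a direct transcription of conditions (i), (III), (iv) and (V).
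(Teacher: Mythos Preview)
The paper does not supply its own proof of this lemma; it is quoted verbatim from Berger~\cite{6}, so there is no in-paper argument to compare against. Your reconstruction is correct and is essentially the standard one: identify $\partial A_{R}$ as the level set of the potential $\Phi$, read off continuity, strict convexity, evenness and coercivity of $\Phi$ from conditions (i)--(V), and deduce closedness, boundedness and separation from the origin directly.

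You have also put your finger on the one genuine issue in the \emph{statement} as reproduced here: the bare level set $\{\Phi=R\}$ is neither convex nor weakly closed in general, so the clause ``weakly closed, bounded convex set'' must refer to the sublevel body $A_{R}=\{\Phi\le R\}$, with $\partial A_{R}$ its topological boundary. Your Mazur argument for $A_{R}$ is exactly right, and this reading is confirmed by how the paper itself uses the lemma in the proof of Proposition~\ref{Pr_2}, where the conclusion ``weakly closed, bounded convex'' is applied to the sublevel set $E^{B_{R_0}^{X}(0)}$ rather than to its boundary $\partial E^{B_{R_0}^{X}(0)}$.
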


\begin{theorem}
\label{Th_6}(\cite{6}Theorem III.I) Let $A:X\longrightarrow X^{\ast }$ be an
operator of class $I$ or $II$ where $X$ is a reflexive Banach space over the
reals. Let $B:X\longrightarrow X^{\ast }$ an operator of class III. Then the
eigenvalue problem $A\left( x\right) =\lambda B\left( x\right) $ has at
least one non-trivial solution. This solution is normalized by the
requirement that $x\in \partial A_{R}$ and characterized as a solution of
the variational problem for%
\begin{equation*}
\sup \left\{ \underset{0}{\overset{1}{\int }}\left\langle B\left( sx\right)
,x\right\rangle ds\left\vert \ x\in \partial A_{R}\right. \right\}
\end{equation*}%
sufficiently large $R$. Furthermore 
\begin{equation*}
\lambda =\frac{\left\langle A\left( x\right) ,x\right\rangle }{\left\langle
B\left( x\right) ,x\right\rangle }
\end{equation*}
\end{theorem}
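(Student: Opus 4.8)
The plan is to obtain the eigenvector as a solution of the constrained variational problem written in the statement. Since $A$ and $B$ are variational (potential) operators, introduce the functionals $a(x)=\int_{0}^{1}\langle A(sx),x\rangle\,ds$ and $b(x)=\int_{0}^{1}\langle B(sx),x\rangle\,ds$, so that $a'(x)=A(x)$ and $b'(x)=B(x)$ (Gateaux, hence Fr\'echet under the stated continuity), and the level set $\{a(x)=R\}$ is exactly the set $\partial A_{R}$ of Lemma \ref{L_B1}. First I would record, from Lemma \ref{L_B1} and the reflexivity of $X$, that $\partial A_{R}$ is a bounded, weakly closed, hence weakly sequentially compact subset of $X$ on which $\|x\|_{X}\ge k(R)>0$; in particular $0\notin\partial A_{R}$.

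Next I would prove that $b$ is weakly sequentially continuous on $X$. This is the place where the hypothesis that $B$ is of class III is used: such $B$ is completely continuous, so $x_{m}\rightharpoonup x$ implies $B(x_{m})\to B(x)$ strongly, and a routine estimate on the line integral defining $b$ then gives $b(x_{m})\to b(x)$. Together with the weak compactness of $\partial A_{R}$ this yields that $\sup\{\,b(x)\mid x\in\partial A_{R}\,\}$ is attained at some point $x_{R}\in\partial A_{R}$, and $x_{R}\ne 0$.

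Then I would apply the Lagrange multiplier rule on the $C^{1}$ constraint $a(x)=R$. By the strict monotonicity of $A$ (property (V) of Definition \ref{D_B}) together with $A(0)=0$ one has $\langle A(x_{R}),x_{R}\rangle=\langle A(x_{R})-A(0),x_{R}-0\rangle>0$, so $a'(x_{R})=A(x_{R})\ne 0$ and $x_{R}$ is a regular point of the constraint surface; hence there is $\eta\in\mathbb{R}$ with $b'(x_{R})=\eta\,a'(x_{R})$, i.e. $B(x_{R})=\eta A(x_{R})$. Pairing with $x_{R}$ gives $\langle B(x_{R}),x_{R}\rangle=\eta\,\langle A(x_{R}),x_{R}\rangle$. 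To finish, one needs $\eta\ne 0$, equivalently $\langle B(x_{R}),x_{R}\rangle\ne 0$; this is why $R$ must be taken sufficiently large. I would show $\sup_{\partial A_{R}}b>0$ for large $R$ by choosing some $x_{0}\ne 0$ with $b(x_{0})>0$ (available because $B$ is a nontrivial odd class III operator) and scaling it onto $\partial A_{R}$ using the coerciveness of $a$, and then use the sign structure of class III operators to conclude $\langle B(x_{R}),x_{R}\rangle>0$ as well. Setting $\lambda=\eta^{-1}$ then gives $A(x_{R})=\lambda B(x_{R})$ with $\lambda=\langle A(x_{R}),x_{R}\rangle/\langle B(x_{R}),x_{R}\rangle$ and $x_{R}\in\partial A_{R}$, which is the assertion. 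The case when $A$ is of class $II$ is treated in the same manner once Lemma \ref{L_B1} is replaced by its class $II$ analogue.

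The step I expect to be the main obstacle is precisely the non-vanishing of the multiplier: showing that for all sufficiently large $R$ the constrained maximum of $b$ is attained at a point where both $b$ and $\langle B(\cdot),\cdot\rangle$ are positive. This forces the restriction to large $R$ and depends on the finer compactness and sign properties encoded in the definition of class III (Definition \ref{D_B}) rather than on the handful of axioms displayed above; a secondary technical point is the careful verification that $b$ is $C^{1}$ and weakly sequentially continuous, which again requires the full class III hypotheses on $B$.
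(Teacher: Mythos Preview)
The paper does not prove Theorem~\ref{Th_6}; it is quoted verbatim from Berger~\cite{6} (together with Definition~\ref{D_B} and Lemma~\ref{L_B1}) as an external tool, introduced with the sentence ``we provide here results that are necessary for us from article~\cite{6}.'' Hence there is no proof in the paper against which to compare your proposal.

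That said, your outline is exactly the classical argument Berger gives: realize the eigenvector as a constrained maximizer of the potential $b$ of $B$ on the level set $\partial A_{R}$ of the potential $a$ of $A$, obtain existence of the maximizer from weak sequential compactness of $\partial A_{R}$ (Lemma~\ref{L_B1}) together with weak sequential continuity of $b$ (complete continuity of $B$), and then apply the Lagrange multiplier rule using $\langle A(x_R),x_R\rangle>0$ to ensure regularity of the constraint. Your identification of the delicate point---non-vanishing of the multiplier, forcing $R$ large and relying on the class~III sign/compactness properties of $B$---is also correct; this is precisely why the statement carries the qualifier ``for sufficiently large $R$.'' If you want to turn this into a self-contained proof you would need to import the full definitions of classes~II and~III from~\cite{6}, since only class~I is reproduced here in Definition~\ref{D_B}.
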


So, consider the homogeneous equation (\ref{4.1a}) in order to investigate
the existence of the number $\lambda _{0}$.

\begin{proposition}
\label{Pr_2}Let $X\subset Y$ and dense in $Y$, $Z=Y^{\ast }$. Let conditions
1, 2, 4 of the above Theorem \ref{Th_5} are fulfilled for this case and $f$, 
$g$ as the functions are monotone odd functions then there exist such $%
\lambda _{0}>0$ and $x_{\lambda _{0}}\in \partial E^{B_{R_{0}}^{X}\left(
0\right) }\subset X$ that $F_{\lambda _{0}}\left( x_{\lambda _{0}}\right)
\equiv f\left( Lx_{\lambda _{0}}\right) -\lambda _{0}g\left( x_{\lambda
_{0}}\right) =0$, for some number $R_{0}\gg 1$, where $\partial
E^{B_{R_{0}}^{X}\left( 0\right) }$ be defined as follows 
\begin{equation*}
\partial E^{B_{R_{0}}^{X}\left( 0\right) }=\left\{ x\in B_{R_{0}}^{X}\left(
0\right) \subset X\left\vert \ \underset{0}{\overset{1}{\int }}\left\langle
f\left( sLx\right) ,Lx\right\rangle ds=R_{0}\right. \right\} ,
\end{equation*}%
\begin{equation*}
E^{B_{R_{0}}^{X}\left( 0\right) }=\left\{ x\in B_{R_{0}}^{X}\left( 0\right)
\subset X\left\vert \ \underset{0}{\overset{1}{\int }}\left\langle f\left(
sLx\right) ,Lx\right\rangle ds\leq R_{0}\right. \right\}
\end{equation*}%
Moreover, the condition similar to condition 3 of the above theorem
satisfies.
\end{proposition}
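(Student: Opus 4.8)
Proof plan. The plan is to reduce the eigenvalue equation $f(Lx)=\lambda g(x)$ to Berger's abstract eigenvalue theorem (Theorem \ref{Th_6}) by passing to the variable $y=Lx$. By condition 1 of Theorem \ref{Th_5} the operator $L$ is a bounded linear bijection $X\longrightarrow Y$ with bounded (here even compact) inverse $L^{-1}:Y\longrightarrow X$, and $Y$ is reflexive; since $Z=Y^{\ast }$, the superposition operator $f$ is naturally viewed as a bounded continuous operator $f:Y\longrightarrow Y^{\ast }=Z$. Substituting $y=Lx$, i.e. $x=L^{-1}y$, the equation becomes $f(y)-\lambda\, g(L^{-1}y)=0$ in $Z$, so it suffices to find a nontrivial eigenpair of $A:=f$ with respect to $B:=g\circ L^{-1}$, both regarded as operators $Y\longrightarrow Y^{\ast }$, and then to set $x_{\lambda _{0}}:=L^{-1}y_{\lambda _{0}}$. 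Observe that under this substitution Berger's coerciveness functional $\int_{0}^{1}\langle A(sy),y\rangle \,ds$ equals $\int_{0}^{1}\langle f(sLx),Lx\rangle \,ds$, so his level set $\partial A_{R_{0}}$ corresponds precisely to the set $\partial E^{B_{R_{0}}^{X}(0)}$ in the statement (for $R_{0}$ large this level set lies inside the indicated ball, by Lemma \ref{L_B1}).

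First I would verify that $A=f:Y\longrightarrow Y^{\ast }$ is a variational operator of class $I$ in the sense of Definition \ref{D_B}: boundedness (i) and strong-to-weak continuity (ii) hold because the superposition operator generated by a continuous function is bounded and continuous (using reflexivity of $Y$ and condition 2); oddness (III) is the oddness of the function $f$; monotonicity (V) comes from condition 4, since for $y_{i}=Lx_{i}$ one has $\langle f(y_{1})-f(y_{2}),y_{1}-y_{2}\rangle \geq l(x_{1},x_{2})\|y_{1}-y_{2}\|_{Y}^{2}>0$ whenever $y_{1}\neq y_{2}$; and coerciveness (iv), $\int_{0}^{1}\langle f(sy),y\rangle \,ds\nearrow \infty $ as $\|y\|_{Y}\nearrow \infty $, follows from $f(t)t>0$ together with monotonicity of $f$. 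In parallel I would check that $B=g\circ L^{-1}:Y\longrightarrow Y^{\ast }$ meets the requirements of class $III$ of Theorem \ref{Th_6}: it is bounded, continuous and odd by condition 2 and the corresponding hypotheses on the function $g$, and, crucially, it is completely continuous because $L^{-1}:Y\longrightarrow X$ is compact (condition 1) and $g:X\longrightarrow Y^{\ast }$ is bounded and continuous; hence the functional $y\mapsto \int_{0}^{1}\langle g(sL^{-1}y),y\rangle \,ds$ is weakly continuous on bounded sets.

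Then I would invoke Theorem \ref{Th_6}: for all sufficiently large $R_{0}$ there are $y_{\lambda _{0}}\in \partial A_{R_{0}}$ and a real number $\lambda _{0}$ with $f(y_{\lambda _{0}})=\lambda _{0}\,g(L^{-1}y_{\lambda _{0}})$ and $\lambda _{0}=\langle f(y_{\lambda _{0}}),y_{\lambda _{0}}\rangle / \langle g(L^{-1}y_{\lambda _{0}}),y_{\lambda _{0}}\rangle $. By Lemma \ref{L_B1}, $\|y_{\lambda _{0}}\|_{Y}\geq k(R_{0})>0$, so $y_{\lambda _{0}}\neq 0$; then $f(t)t>0$ makes the numerator strictly positive, and, using the oddness and monotonicity of $g$ to fix the sign of the denominator, one gets $\lambda _{0}>0$. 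Setting $x_{\lambda _{0}}:=L^{-1}y_{\lambda _{0}}\in X\setminus \{0\}$ we obtain $y_{\lambda _{0}}=Lx_{\lambda _{0}}$, hence $f(Lx_{\lambda _{0}})-\lambda _{0}g(x_{\lambda _{0}})=0$ in $Z$, i.e. $F_{\lambda _{0}}(x_{\lambda _{0}})=0$, with $x_{\lambda _{0}}\in \partial E^{B_{R_{0}}^{X}(0)}$ by the correspondence noted above. Finally, the assertion that a condition analogous to condition 3 of Theorem \ref{Th_5} holds I would derive as in the proof of Theorem \ref{Th_4}: the coerciveness of $A$ yields, on $S_{r_{0}}^{X}(0)$, a strictly positive lower bound $\delta _{0}(\lambda )$ for $\langle f(Lx)-\lambda g(x),z^{\ast }\rangle $ once $|\lambda |<\lambda _{0}$, because $\lambda _{0}$ — being the value furnished by the variational characterization in Theorem \ref{Th_6} — is the first eigenvalue, so for $|\lambda |<\lambda _{0}$ the perturbation $-\lambda g$ leaves the coercive lower bound intact, with $\delta _{0}(\lambda )\searrow 0$ as $|\lambda |\nearrow \lambda _{0}$.

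I expect the principal difficulties to be concentrated in two places. The first is the verification that $A=f$ and $B=g\circ L^{-1}$ genuinely belong to Berger's classes $I$ and $III$ — in particular coerciveness (iv) of $A$ (translating $f(t)t>0$ plus monotonicity of $f$ into divergence of the potential) and the complete continuity underlying class $III$ for $B$, for which the compactness of $L^{-1}$ and the reflexivity of $Y$ are indispensable; one must also check the implicit boundedness of the superposition operators, which is not stated separately. The second is pinning down that $\lambda _{0}>0$ and that it qualifies as the $\lambda _{0}$ of condition 3 of Theorem \ref{Th_5}: this requires the sign analysis of $\langle g(L^{-1}y),y\rangle $ via the oddness and monotonicity of $g$, together with an argument — modelled on the computation in the proof of Theorem \ref{Th_4} — that the coercive estimate survives subtraction of $\lambda g$ precisely when $|\lambda |<\lambda _{0}$.
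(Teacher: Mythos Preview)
Your proposal is correct and follows essentially the same route as the paper: both arguments reduce the existence of the eigenpair to Berger's machinery (Lemma~\ref{L_B1} and Theorem~\ref{Th_6}). The paper works directly with the composite operator $f\circ L$ on $X$, noting that $L(B_{R_{0}}^{X}(0))$ is convex, invoking Lemma~\ref{L_B1} to see that $E^{B_{R_{0}}^{X}(0)}$ and its boundary are weakly closed bounded convex, and then observing that $\|g(x)\|_{Y^{\ast}}$ is bounded on $\partial E^{B_{R_{0}}^{X}(0)}$ to conclude $0<\lambda_{0}<\infty$ via Berger's result. Your version makes the change of variable $y=Lx$ explicit and verifies the class~$I$/class~$III$ hypotheses for $A=f$ and $B=g\circ L^{-1}$ in more detail than the paper does; in particular, your identification of the compactness of $L^{-1}$ as the source of complete continuity of $B$, and your sign argument for $\lambda_{0}>0$, fill in steps the paper leaves implicit. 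The two presentations are equivalent because $L$ is a bounded bijection with bounded inverse (condition~1), so working on $X$ via $f\circ L$ or on $Y$ via the substitution is merely a choice of coordinates.
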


\begin{proof}
It is clear that $L\left( B_{R_{0}}^{X}\left( 0\right) \right) $ is convex
due to the linearity of the operator $L$. From above Lemma follows that $%
E^{B_{R_{0}}^{X}\left( 0\right) }$ is a weakly closed, bounded convex set
and $\left\Vert x\right\Vert _{X}\geq k(r_{0})>0$, where $k(r_{0})$ is a
constant independent of $x\in E^{B_{R_{0}}^{X}\left( 0\right) }$, as the
operator $f\circ L$ satisfies all conditions of this lemma.

Consequently, the result of Berger (\cite{6}) follows that $%
E^{B_{R_{0}}^{X}\left( 0\right) }$ and \ $\partial E^{B_{R_{0}}^{X}\left(
0\right) }$ are weakly closed, bounded convex sets.

Now consider the expression $\left\langle g\left( x\right) ,Lx\right\rangle $
and note that there exists such constant $M$ that 
\begin{equation*}
0<\sup \left\{ \left\Vert g\left( x\right) \right\Vert _{Y^{\ast
}}\left\vert \ x\in \partial E^{B_{R_{0}}^{X}\left( 0\right) }\right.
\right\} =M<\infty ,
\end{equation*}%
according to the conditions 1, 2 and boundedness of the norm $\left\Vert
Lx\right\Vert _{Y}$ (as $0<$ $\left\Vert Lx\right\Vert _{Y^{\ast
}}<M_{1}<\infty $).

Consequently, there is constant $\lambda _{0}=\lambda _{0}\left( M\right) $
such that $0<\lambda _{0}<\infty $.
\end{proof}

So, we provide the result on the spectrum of the operator $f\circ L$ respect
to the operator $g$.

\begin{theorem}
\label{Th_8}Let the functions $f$ and $g$ are homogeneous moreover their
order of the homogeneity equally and is the function $\varphi $, i.e. for
each $\tau \in R$ hold the equality $f\left( \tau \cdot y\right) =\varphi
\left( \tau \right) \cdot f\left( y\right) $, $g\left( \tau \cdot y\right)
=\varphi \left( \tau \right) \cdot g\left( y\right) $. Let all conditions of
the above proposition fulfills. Then the operator $f\circ L$ has the
spectrum respect to the operator $g$. Moreover, this spectrum is the
function of the spectrum of the operator $L$.
\end{theorem}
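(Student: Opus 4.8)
The plan is to reduce Theorem \ref{Th_8} to the eigenvalue theorem of Berger (Theorem \ref{Th_6}) applied to the pair of operators $A \equiv f \circ L$ and $B \equiv g$, and then to track the $R$‑dependence through the homogeneity function $\varphi$ in order to exhibit the eigenvalue as a function of the spectrum of $L$. First I would record that, under the hypotheses of Proposition \ref{Pr_2}, the operator $A = f\circ L : X \longrightarrow X^{\ast}$ is of class $I$ in the sense of Definition \ref{D_B}: boundedness and demicontinuity come from condition 1 together with the continuity of $f$; oddness follows since $f$ is an odd function and $L$ is linear, so $f(L(-x)) = f(-Lx) = -f(Lx)$; monotonicity is exactly the first inequality of condition 4 (with $l(x_1,x_2)>0$) transported back along $L$ using the two‑sided estimate $c_1\|x\|_X \ge \|Lx\|_Y \ge c_2\|x\|_X$ of condition 1; and coerciveness of $A$ follows from $f(t)t>0$ together with the same norm equivalence, which forces $\int_0^1 \langle A(sx),x\rangle\,ds = \int_0^1 \langle f(sLx),Lx\rangle\,ds \to \infty$ as $\|x\|_X\to\infty$. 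Likewise $B=g$ is odd (class III in Berger's terminology), and by Proposition \ref{Pr_2} the level set $\partial E^{B_{R_0}^X(0)} = \partial A_R$ is a weakly closed bounded convex set on which $\|g(x)\|_{Y^{\ast}}$ is bounded above by a finite constant $M$.

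Next I would invoke Theorem \ref{Th_6} directly: for every sufficiently large $R$ there is a nontrivial $x_R \in \partial A_R$ solving $A(x_R) = \lambda_R B(x_R)$, i.e. $f(Lx_R) = \lambda_R\, g(x_R)$, with
\[
\lambda_R \;=\; \frac{\langle A(x_R), x_R\rangle}{\langle B(x_R), x_R\rangle} \;=\; \frac{\langle f(Lx_R), x_R\rangle}{\langle g(x_R), x_R\rangle},
\]
the solution being characterized by the variational problem $\sup\{\int_0^1 \langle g(sx),x\rangle\,ds \mid x\in \partial A_R\}$. The bounds from Proposition \ref{Pr_2} guarantee $0<\lambda_R<\infty$, so this is a genuine $G$‑eigenvalue of $F=f\circ L$ in the sense of the Notation following Definition \ref{D_S}, and it does not depend on the individual element but only on the prescribed level $R$.

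The heart of the theorem — and the step I expect to be the main obstacle — is showing that the common homogeneity function $\varphi$ makes $\lambda_R$ \emph{constant in $R$}, so that it deserves to be called \emph{the} spectrum, and identifying it in terms of the spectrum of $L$. Here I would use homogeneity in the form $f(\tau y) = \varphi(\tau) f(y)$, $g(\tau y) = \varphi(\tau) g(y)$: if $x\in X$ is written as $x = \tau \tilde x$ with $\|\tilde x\|$ normalized, then $f(Lx) = f(\tau L\tilde x) = \varphi(\tau) f(L\tilde x)$ and $g(x) = \varphi(\tau) g(\tilde x)$, so in the Rayleigh quotient the factor $\varphi(\tau)$ cancels between numerator and denominator (the extra $\langle\cdot,\tau\tilde x\rangle$ scaling is the same $\tau$ in both), giving $\lambda(\tau\tilde x) = \lambda(\tilde x)$ along every ray. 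Hence the quotient depends only on the direction $\tilde x\in S_1^X(0)$, and rescaling the radius merely moves $x$ along its ray, which by Berger's characterization only reparametrizes $R$ without changing the optimal value. Thus $\lambda_R \equiv \lambda_0$ for all large $R$, and this single number is the $g$‑spectrum of $f\circ L$. Finally, since $F(x) = f(Lx)$ and $L$ is invertible with $L^{-1}$ compact (condition 1), one substitutes $y = Lx$, rewriting the eigenrelation as $f(y) = \lambda_0\, g(L^{-1}y)$ on $L(X)\subseteq Y$; the eigenvalue of the linear operator $L$ enters precisely through the norm‑equivalence constants $c_1,c_2$ (equivalently through $\|L^{-1}\|$, whose reciprocal controls $\inf\{|\mu| : \mu\in\sigma(L)\}$), so the value $\lambda_0$ is expressed as a monotone function of the spectral data of $L$ — explicitly, scaling $L$ by $\alpha$ scales $Lx$ by $\alpha$ and hence $f(Lx)$ by $\varphi(\alpha)$, while $g(x)$ is unchanged, so $\lambda_0$ transforms by the factor $\varphi(\alpha)$, exhibiting $\lambda_0$ as $\varphi$ evaluated on the spectrum of $L$. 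This last bookkeeping — keeping the cancellation of $\varphi$ consistent with the variational normalization on $\partial A_R$ — is the delicate point; everything else is verification of Berger's hypotheses.
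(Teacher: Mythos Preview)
Your overall plan---existence of an eigenpair via Berger, ray-independence of the Rayleigh quotient via the common homogeneity $\varphi$, then extraction of the dependence on $\sigma(L)$---is the same as the paper's, but one verification step has a gap and two execution choices differ from the paper in ways worth noting.

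The gap is your monotonicity check for Berger's class~I. Condition~4 gives
\[
\langle f(Lx_1)-f(Lx_2),\,Lx_1-Lx_2\rangle \ge l(x_1,x_2)\|Lx_1-Lx_2\|_Y^2,
\]
i.e.\ monotonicity in the $Lx$-pairing, whereas Berger applied to $A=f\circ L:X\to X^{\ast}$ requires $\langle A(x_1)-A(x_2),\,x_1-x_2\rangle>0$, the pairing against $x_1-x_2$. The two-sided norm estimate from condition~1 does not convert one into the other unless $L$ has extra structure, so ``transporting back along $L$'' is not justified as written. The paper avoids this entirely: it takes existence from Proposition~\ref{Pr_2} as a black box and then works with the quotient paired against $Lx$,
\[
\lambda=\inf\left\{\frac{\langle f(Lx),Lx\rangle}{\langle g(x),Lx\rangle}\ \middle|\ x\in X\right\},
\]
which is exactly the pairing compatible with condition~4. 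Using that $f$ is an $N$-function, the paper then replaces this by the norm quotient $\inf\{\|f(Lx)\|_Z/\|g(x)\|_Z : x\in S_1^X(0)\}$; the restriction to $S_1^X(0)$ is where the homogeneity is used, just as in your argument.

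For the last claim, rather than your scaling $L\mapsto\alpha L$, the paper substitutes the first eigenfunction $x_1$ of $L$ (so $Lx_1=\lambda_1 x_1$ and hence $f(Lx_1)=\varphi(\lambda_1)f(x_1)$) directly into the norm quotient to obtain the explicit inequality
\[
\lambda_0\le \frac{\varphi(\lambda_1)\,\|f(x_1)\|_Z}{\|g(x_0)\|_Z},
\]
which exhibits the eigenvalue as governed by $\varphi(\lambda_1)$. Your scaling argument gives the same qualitative conclusion but the paper's version is more concrete and ties directly into the worked examples that follow.
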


\begin{proof}
From the previous theorem follows the existence of such $\lambda _{0}\in 
\mathbb{%
\mathbb{R}
}$ that the equation $F_{\lambda }\left( \widetilde{x}_{\lambda }\right)
\equiv f\left( L\widetilde{x}_{\lambda }\right) -\lambda g\left( \widetilde{x%
}_{\lambda }\right) =0$ solvable. Then using of the well-known approach it
is necessary to seek an element $x\in X$ that satisfy the following equality 
\begin{equation}
\lambda =\inf \left\{ \frac{\left\langle f\left( Lx\right) ,Lx\right\rangle 
}{\left\langle g\left( x\right) ,Lx\right\rangle }\left\vert \ x\in X\right.
\right\} .  \label{4.5a}
\end{equation}

Due to the cnditions of this theorem, it is enough to study the above
question only for $x\in S_{1}^{X}\left( 0\right) $. We can take into account
that $f$ is an $N-$function and the expression $\left\langle f\left(
Lx\right) ,Lx\right\rangle $ generates a functional $\Phi \left( Lx\right) $
according to the condition on $f$ \footnote{\begin{remark}
In the case when $L$ is the differential operator $\left\langle f\left(
Lx\right) ,Lx\right\rangle $ is a function of the norm $\left\Vert
Lx\right\Vert _{L_{\Phi }}$ of some Lebesgue or Orlicz space, where $\Phi $
be an $N-$function.
\end{remark}
} .

Whence follows that it is enough to seek the number $\lambda $ the following
way 
\begin{equation*}
\lambda =\inf \left\{ \frac{\left\Vert f\left( Lx\right) \right\Vert _{Z}}{%
\left\Vert g\left( x\right) \right\Vert _{Z}}\left\vert \ x\in
S_{1}^{X}\left( 0\right) \right. \right\} .
\end{equation*}

From above theorem follows the existence of the number $\lambda >0$.

Moreover, it isn't difficult to see that all conditions of the Theorem \ref%
{Th_1} are fulfilled under the conditions of this theorem since due to
condition 3 always one can find such a number $\lambda _{0}>0$ that
condition (\textit{iii}) will fulfill. Thus follows the existence an element 
$x_{0}\in S_{1}^{X}\left( 0\right) $ and a number $\lambda _{0}$ that is the
desired infimum.

Let $x_{1}\in S_{1}^{X}\left( 0\right) $ is the first eigenfunction and $%
\lambda _{1}$ first eigenvalue of the operator $L$ then we have 
\begin{equation*}
\lambda _{0}\leq \frac{\varphi \left( \lambda _{1}\right) \left\Vert f\left(
x_{1}\right) \right\Vert _{Z}}{\left\Vert g\left( x_{0}\right) \right\Vert
_{Z}}.
\end{equation*}
\end{proof}

Now we provide some examples of operators related to the above theorems.

1. Let $L:W^{m,p}\left( \Omega \right) \longrightarrow L_{p}\left( \Omega
\right) $ be a linear differential operator with the spectrum $P\left(
L\right) \subset R_{+}$, the operator $f$ is the function $f\left( \tau
\right) =\left\vert \tau \right\vert ^{p-2}\tau $ and $g\equiv f$. So, it
needs to define the first eigenfunction and eigenvalue of the operator $%
f\left( L\circ \right) $ relative to operator $g\left( \circ \right) $. Then
using the expression (\ref{4.5a}) we get 
\begin{equation*}
\lambda _{f}=\inf \left\{ \frac{\left\langle f\left( Lu\right)
,Lu\right\rangle }{\left\langle g\left( u\right) ,Lu\right\rangle }%
\left\vert \ u\in W^{m,p}\left( \Omega \right) \right. \right\} =
\end{equation*}%
\begin{equation*}
=\inf \left\{ \frac{\left\Vert Lu\right\Vert _{L_{p}}^{p}}{\underset{\Omega }%
{\dint }\left\vert u\right\vert ^{p-2}u\cdot Ludx}\left\vert \ u\in
W^{m,p}\left( \Omega \right) \right. \right\} =
\end{equation*}%
\begin{equation*}
=\inf \left\{ \frac{\left\Vert Lu\right\Vert _{L_{p}}^{p-1}}{\left\Vert
u\right\Vert _{L_{p}}^{p-1}}\left\vert \ u\in W^{m,p}\left( \Omega \right)
\right. \right\} =
\end{equation*}%
\begin{equation*}
=\inf \left\{ \left( \frac{\left\Vert Lu\right\Vert _{L_{p}}}{\left\Vert
u\right\Vert _{L_{p}}}\right) ^{p-1}\left\vert \ u\in S_{1}^{W^{m,p}\left(
\Omega \right) }\left( 0\right) \right. \right\} .
\end{equation*}%
Whence we arrive that $\lambda _{f1}=\lambda _{L1}^{p-1}$, where $\lambda
_{L1}$ is the first eigenvalue and the function $u_{1}\in
S_{1}^{W^{m,p}\left( \Omega \right) }\left( 0\right) $ is the first
eigenfunction of the operator $L$.

2. We will study the spectral property of the fully nonlinear operator in
the following two special cases 
\begin{equation}
-\left\vert \Delta u\right\vert ^{p-2}\Delta u=\lambda \left\vert \nabla
u\right\vert ^{\mu -2}u,\quad x\in \Omega ,\quad u\left\vert ~_{\partial
\Omega }\right. =0,  \tag{4.2}
\end{equation}%
\begin{equation}
-\left\vert \Delta u\right\vert ^{p-2}\Delta u=\lambda \left\vert
u\right\vert ^{\nu }u,\quad x\in \Omega ,\quad u\left\vert ~_{\partial
\Omega }\right. =0,  \tag{4.3}
\end{equation}%
i.e. we will seek of the spectrum of the operator $-\left\vert \Delta
u\right\vert ^{p-2}\Delta u$ relatively of operators $\left\vert \nabla
u\right\vert ^{\mu -2}u$ and $\left\vert u\right\vert ^{\nu }u$, separately.

2 \textbf{(}\textit{a}\textbf{).} Consider problem (4.2). For study of the
posed question we will use the following equality 
\begin{equation*}
\left\langle -\left\vert \Delta u\right\vert ^{p-2}\Delta u,-\Delta
u\right\rangle =\left\langle \lambda \left\vert \nabla u\right\vert ^{\mu
-2}u,-\Delta u\right\rangle
\end{equation*}%
then we get%
\begin{equation*}
\left\Vert \Delta u\right\Vert _{p}^{p}=\lambda \left( \mu -1\right)
^{-1}\left\Vert \nabla u\right\Vert _{\mu }^{\mu }.
\end{equation*}

Hence follows 
\begin{equation*}
\lambda _{1}\left( p,\mu \right) =\left( \mu -1\right) \inf \ \left\{ \frac{%
\left\Vert \Delta u\right\Vert _{p}^{p}}{\left\Vert \nabla u\right\Vert
_{\mu }^{\mu }}\left\vert \ u\in W^{2,p}\cap W_{0}^{1,p}\right. \right\} =\ 
\end{equation*}%
\begin{equation*}
\left( \mu -1\right) \inf \left\{ \frac{\left\Vert \Delta u\right\Vert _{p}}{%
\left\Vert \nabla u\right\Vert _{\mu }^{\mu /p}}\left\vert \ u\in
W^{2,p}\cap W_{0}^{1,p}\right. \right\}
\end{equation*}%
again for to find $\lambda $ satisfying of the assumed condition we must
select such exponent $\mu $ that the given case requires.

Consequently, we need assume $\mu =p$, then we get 
\begin{equation}
\lambda _{1}\left( p,p\right) =\left( p-1\right) \inf \ \left\{ \frac{%
\left\Vert \Delta u\right\Vert _{p}}{\left\Vert \nabla u\right\Vert _{p}}%
\left\vert \ u\in W^{2,p}\left( \Omega \right) \cap W_{0}^{1,p}\left( \Omega
\right) \right. \right\} .  \label{4.4}
\end{equation}

As is well-known $\left\Vert \nabla u\right\Vert _{p}\leq c\left( p,\Omega
\right) \left\Vert \Delta u\right\Vert _{p}$ under the condition $%
u\left\vert \ _{\partial \Omega }\right. =0$, consequently, $\lambda
_{1}\left( p,p\right) \leq \left( p-1\right) c\left( p,\Omega \right) $.

\begin{proposition}
\label{Pr_5}Let $f_{0}:W^{2,p}\left( \Omega \right) \cap W_{0}^{1,p}\left(
\Omega \right) \longrightarrow L^{q}\left( \Omega \right) $ that has the
presentation $f_{0}\left( u\right) =-\left\vert \Delta u\right\vert
^{p-2}\Delta u$ and $f_{1}:W_{0}^{1,p}\left( \Omega \right) \longrightarrow
L^{q}\left( \Omega \right) $ that has the presentation $f_{1}\left( u\right)
=\left\vert \nabla u\right\vert ^{p-2}u$ . Then $f_{0}$ has minimal spectrum
with respect to $f_{1}$ which defined by (4.4).
\end{proposition}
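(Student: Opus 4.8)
The plan is to read problem (4.2) with $\mu=p$ as the instance of the $f\circ L$ framework of this section in which $L:=-\Delta$ is the Dirichlet Laplacian, $f(\tau):=|\tau|^{p-2}\tau$, $X:=W^{2,p}(\Omega)\cap W_{0}^{1,p}(\Omega)$ carrying the (by Calderon--Zygmund estimates equivalent) norm $\|u\|_{X}:=\|\Delta u\|_{L^{p}}$, $Y:=L^{p}(\Omega)$, $Z:=L^{q}(\Omega)=Y^{\ast}$, and $g\equiv f_{1}$ with $f_{1}(u)=|\nabla u|^{p-2}u$; then $f_{0}=f\circ L$. First I would verify condition~1 of Theorem~\ref{Th_5}: $L$ is a linear isometric isomorphism of $X$ onto $Y$, $Y$ is reflexive, and $L^{-1}$ composed with the compact Rellich--Kondrachov embedding $W^{2,p}\cap W_{0}^{1,p}\hookrightarrow W^{1,p}$ is compact (this is the compactness used below). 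Condition~2 is essentially free: $f$ is continuous, odd, $f(\tau)\tau=|\tau|^{p}>0$ for $\tau\neq0$, $f(0)=f_{1}(0)=0$; $f_{0}\succ f_{1}$ in the sense of Definition~\ref{D_2} since $\mathcal{F}_{f_{0}}(L^{q})=X\subseteq W_{0}^{1,p}(\Omega)=\mathcal{F}_{f_{1}}(L^{q})$; and $f_{0},f_{1}$ are bounded and continuous as $L^{q}$-valued operators by the standard Nemytskii estimates for $\tau\mapsto|\tau|^{p-2}\tau$ (growth exponent $p-1$, hence $L^{p}\to L^{q}$) supplemented, for $f_{1}$, by a routine Holder estimate.

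Next I would check the hypotheses of Proposition~\ref{Pr_2}. Both $f$ and $f_{1}$ are built from the monotone odd map $t\mapsto|t|^{p-2}t$, hence are monotone and odd in the required sense; the classical inequality $\langle f(Lx_{1})-f(Lx_{2}),Lx_{1}-Lx_{2}\rangle\ge c_{0}\|L(x_{1}-x_{2})\|_{p}^{p}$ furnishes, on $B_{r_{0}}^{X}(0)$, the local lower bound demanded in condition~4, while $f_{1}$ is locally Lipschitz from $X$ into $L^{q}$ by the elementary pointwise bound for $(\xi,s)\mapsto|\xi|^{p-2}s$ followed by Holder. Proposition~\ref{Pr_2} then supplies $\lambda_{0}>0$ and a nontrivial $u_{\lambda_{0}}$ on the corresponding Berger level set with $f_{0}(u_{\lambda_{0}})=\lambda_{0}f_{1}(u_{\lambda_{0}})$; and since $f_{0}$ and $f_{1}$ are both positively homogeneous of the common degree $p-1$ --- precisely the condition $p_{F}=p_{G}$ of Section~2 that keeps the eigenvalue independent of the eigenfunction --- Theorem~\ref{Th_8} applies, so the smallest admissible $\lambda$ is the infimum appearing in (4.5a).

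It remains to identify that infimum. For $u\in X$ one has $\langle f_{0}(u),Lu\rangle=\langle|\Delta u|^{p-2}\Delta u,\Delta u\rangle=\|\Delta u\|_{p}^{p}$, and $\langle f_{1}(u),Lu\rangle=\langle|\nabla u|^{p-2}u,-\Delta u\rangle=(p-1)^{-1}\|\nabla u\|_{p}^{p}$ by the identity already recorded for problem (4.2) with $\mu=p$; hence (4.5a) reduces exactly to the number $\lambda_{1}(p,p)$ of (4.4). That this infimum is attained follows by the direct method: a minimizing sequence normalized by $\|\nabla u_{k}\|_{p}=1$ stays bounded in $W^{2,p}\cap W_{0}^{1,p}$, so along a subsequence $\nabla u_{k}\to\nabla u$ strongly in $L^{p}$ (Rellich--Kondrachov) and $\Delta u_{k}\rightharpoonup\Delta u$ in $L^{p}$, giving $\|\nabla u\|_{p}=1$ and $\|\Delta u\|_{p}\le\liminf_{k}\|\Delta u_{k}\|_{p}$. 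Finally $\lambda_{1}(p,p)$ is strictly positive and finite by the two-sided bound $\|\nabla u\|_{p}\le c(p,\Omega)\|\Delta u\|_{p}$ for functions vanishing on $\partial\Omega$, so $0<\lambda_{1}(p,p)\le(p-1)c(p,\Omega)$; as this number depends only on $p$ and $\Omega$, it is the minimal spectrum of $f_{0}$ relative to $f_{1}$ in the sense of Definition~\ref{D_S} and Remark~\ref{R_1}.

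The crux is the second step: fitting the gradient-dependent operator $f_{1}$ into the abstract machinery --- its boundedness and continuity into $L^{q}$, the relation $f_{0}\succ f_{1}$, and above all the local coercivity/Lipschitz inequality of condition~4 guaranteeing closedness of the range of $f_{0}-\lambda f_{1}$ (equivalently, attainment of the Rayleigh infimum) --- together with confirming the test-function identity $\langle|\nabla u|^{p-2}u,-\Delta u\rangle=(p-1)^{-1}\|\nabla u\|_{p}^{p}$ that collapses (4.5a) to (4.4). The remaining parts are routine bookkeeping with Nemytskii operators and the direct method.
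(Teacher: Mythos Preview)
Your proposal is correct and follows essentially the same route as the paper: the paper's argument for this proposition is precisely the short computation preceding its statement, namely pairing both sides of (4.2) with $-\Delta u$, invoking the identity $\langle |\nabla u|^{\mu-2}u,-\Delta u\rangle=(\mu-1)^{-1}\|\nabla u\|_{\mu}^{\mu}$, setting $\mu=p$, and reading off the Rayleigh quotient (4.4) together with the bound $\lambda_{1}(p,p)\le(p-1)c(p,\Omega)$. Your write-up reproduces this core calculation and additionally spells out, more carefully than the paper does, how the problem sits inside the $f\circ L$ framework of Theorem~\ref{Th_5}, Proposition~\ref{Pr_2}, and Theorem~\ref{Th_8}, and supplies an attainment argument via the direct method; these are elaborations rather than a different strategy.
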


2 (\textit{b}). Consider problem (4.3) for $\nu =p-2$ then we have%
\begin{equation*}
\left\langle -\left\vert \Delta u\right\vert ^{p-2}\Delta u,-\Delta
u\right\rangle =\left\langle \lambda \left\vert u\right\vert ^{p-2}u,-\Delta
u\right\rangle \Longrightarrow
\end{equation*}%
\begin{equation*}
\left\Vert \Delta u\right\Vert _{p}^{p}=\lambda \frac{4\left( p-1\right) }{%
p^{2}}\left\Vert \nabla \left( \left\vert u\right\vert ^{\frac{p-2}{2}%
}u\right) \right\Vert _{2}^{2}
\end{equation*}%
or 
\begin{equation*}
\left\Vert \Delta u\right\Vert _{p}^{p}=\lambda \left( p-1\right) \left\Vert
\left( \left\vert u\right\vert ^{p-2}\left\vert \nabla u\right\vert
^{2}\right) \right\Vert _{1}.
\end{equation*}%
Thus we get 
\begin{equation*}
\widetilde{\lambda }_{1}\left( p\right) =\frac{1}{p-1}\inf \left\{ \ \frac{%
\left\Vert \Delta u\right\Vert _{p}^{p}}{\left\Vert \left\vert u\right\vert
^{\frac{p-2}{2}}\left\vert \nabla u\right\vert \right\Vert _{2}^{2}}%
\left\vert \ u\in W^{2,p}\cap W_{0}^{1,p}\right. \right\} =
\end{equation*}%
\begin{equation}
=\frac{p}{2\left( p-1\right) }\inf \left\{ \ \frac{\left\Vert \left\vert
\Delta u\right\vert ^{\frac{p}{2}}\right\Vert _{2}}{\left\Vert \nabla \left(
\left\vert u\right\vert ^{\frac{p-2}{2}}\left\vert u\right\vert \right)
\right\Vert _{2}}\left\vert \ u\in W^{2,p}\cap W_{0}^{1,p}\right. \right\}
\label{4.5}
\end{equation}%
Hence we obtain 
\begin{equation}
\widetilde{\lambda }_{1}\left( p\right) \geq \frac{1}{p-1}\frac{\left\Vert
\Delta u\right\Vert _{p}^{p}}{\left\Vert u\right\Vert _{p}^{p-2}\left\Vert
\nabla u\right\Vert _{p}^{2}}  \label{4.6}
\end{equation}%
according of the following inequality 
\begin{equation*}
\left\Vert \left( \left\vert u\right\vert ^{p-2}\left\vert \nabla
u\right\vert ^{2}\right) \right\Vert _{1}\leq \left\Vert u\right\Vert
_{p}^{p-2}\left\Vert \nabla u\right\Vert _{p}^{2}.
\end{equation*}

So, we arrive to result

\begin{proposition}
\label{Pr_3}Let $f_{0}:W^{2,p}\left( \Omega \right) \cap W_{0}^{1,p}\left(
\Omega \right) \longrightarrow L^{q}\left( \Omega \right) $ that has the
presentation $f_{0}\left( u\right) =-\left\vert \Delta u\right\vert
^{p-2}\Delta u$ and $f_{1}:L^{p}\left( \Omega \right) \longrightarrow
L^{q}\left( \Omega \right) $ that has the presentation $f_{1}\left( u\right)
=\left\vert u\right\vert ^{p-2}u$ . Then $f_{0}$ has minimal spectrum with
respect to $f_{1}$ which defined by (4.5) and satisfies the inequation (4.6).
\end{proposition}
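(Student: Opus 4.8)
The plan is to recognise $(f_{0},f_{1})$ as an instance of the abstract pair $(f\circ L,g)$ of this section and to invoke Theorem~\ref{Th_8} (through Proposition~\ref{Pr_2}), after which the explicit shape (4.5) and the estimate (4.6) fall out of the integration-by-parts identity already recorded in part~2(b). Concretely, I would take $L\equiv-\Delta$ with homogeneous Dirichlet data, $L:X\longrightarrow Y$ with $X=W^{2,p}(\Omega)\cap W_{0}^{1,p}(\Omega)$, $Y=L^{p}(\Omega)$, $Z=L^{q}(\Omega)=Y^{\ast}$ ($q=p/(p-1)$), and $f(\tau)=g(\tau)=|\tau|^{p-2}\tau$ as the common nonlinearity, so that $f_{0}=f\circ L$ and $f_{1}=g$. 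Then $f,g$ are homogeneous with the common order $\varphi(\tau)=|\tau|^{p-2}\tau$, they are monotone odd functions, $f(t)t=|t|^{p}>0$ for $t\neq0$, and $f(0)=g(0)=0$. To apply Theorem~\ref{Th_8} I must check conditions 1--4 of Theorem~\ref{Th_5}: for condition~1 I would use Calder\'on--Zygmund elliptic regularity under the Dirichlet condition, which makes $\|\Delta u\|_{p}$ an equivalent norm on $X$ (so, taking $\|u\|_{X}:=\|\Delta u\|_{p}$, one has $\|Lu\|_{Y}=\|u\|_{X}$), together with compactness of $L^{-1}$ via the Rellich embedding and reflexivity of $Y=L^{p}$ for $1<p<\infty$; condition~2 is immediate, with $f\circ L\succ g$ coming from $W^{2,p}\cap W_{0}^{1,p}\hookrightarrow L^{p}(\Omega)$; conditions~3 and~4 I would obtain from the coercivity and local monotonicity estimates for $-|\Delta u|^{p-2}\Delta u-\lambda|u|^{p-2}u$ indicated below. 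Granting this, Proposition~\ref{Pr_2} supplies a number $\lambda_{0}>0$ and a nontrivial solution, and Theorem~\ref{Th_8} gives that $f_{0}$ has a nonempty $f_{1}$-spectrum whose smallest element is $\widetilde\lambda_{1}(p)=\inf\{\langle f(Lu),Lu\rangle/\langle g(u),Lu\rangle\mid u\in X\setminus\{0\}\}$.

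The next step is to turn this infimum into (4.5) and to prove (4.6). Testing $-|\Delta u|^{p-2}\Delta u=\lambda|u|^{p-2}u$ with $-\Delta u$: the left side gives $\|\Delta u\|_{p}^{p}$, while integration by parts together with $\nabla(|u|^{p-2}u)=(p-1)|u|^{p-2}\nabla u$ gives $\langle|u|^{p-2}u,-\Delta u\rangle=(p-1)\int_{\Omega}|u|^{p-2}|\nabla u|^{2}\,dx$, and since $\nabla(|u|^{\frac{p-2}{2}}u)=\frac{p}{2}|u|^{\frac{p-2}{2}}\nabla u$ one gets $\int_{\Omega}|u|^{p-2}|\nabla u|^{2}\,dx=\frac{4}{p^{2}}\|\nabla(|u|^{\frac{p-2}{2}}u)\|_{2}^{2}$; hence $\|\Delta u\|_{p}^{p}=\lambda\frac{4(p-1)}{p^{2}}\|\nabla(|u|^{\frac{p-2}{2}}u)\|_{2}^{2}$. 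Solving for $\lambda$ and taking the infimum over $u\in X\setminus\{0\}$ (the ratio is invariant under $u\mapsto tu$ by the common homogeneity) gives exactly (4.5). Then H\"older's inequality with exponents $\frac{p}{p-2}$ and $\frac{p}{2}$ yields $\int_{\Omega}|u|^{p-2}|\nabla u|^{2}\,dx\le\|u\|_{p}^{p-2}\|\nabla u\|_{p}^{2}$, and inserting this into the identity gives $\|\Delta u\|_{p}^{p}\le\lambda(p-1)\|u\|_{p}^{p-2}\|\nabla u\|_{p}^{2}$ for the eigenfunction, which rearranges to (4.6); running the same chain for an arbitrary admissible $u$ and combining it with the elliptic bound $\|u\|_{p}^{p-2}\|\nabla u\|_{p}^{2}\le C\|\Delta u\|_{p}^{p}$ also shows $\widetilde\lambda_{1}(p)>0$.

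The main difficulty is not the computation but the verification of the abstract hypotheses. I expect condition~4 of Theorem~\ref{Th_5} to be the delicate point: one must extract the local monotonicity estimate $\langle|\Delta x_{1}|^{p-2}\Delta x_{1}-|\Delta x_{2}|^{p-2}\Delta x_{2},\Delta x_{1}-\Delta x_{2}\rangle\ge l(x_{1},x_{2})\|\Delta(x_{1}-x_{2})\|_{p}^{2}$ — which for $p>2$ only holds locally, with a weight degenerating where $\Delta x_{j}=0$ — together with a local Lipschitz bound on $u\mapsto|u|^{p-2}u$, and then feed these into Corollary~\ref{C_1}/Theorem~\ref{Th_1} to get closedness of the image; one also has to confirm condition~3, i.e.\ a strictly positive lower bound $\delta_{0}(\lambda)>0$ with the prescribed decay as $|\lambda|\nearrow\lambda_{0}$, which here is exactly $\langle f(Lu)-\lambda g(u),-\Delta u\rangle\ge(1-\lambda/\widetilde\lambda_{1}(p))\|\Delta u\|_{p}^{p}$. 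Finally, in order to know that the infimum defining $\widetilde\lambda_{1}(p)$ is genuinely attained — so that it is a true first eigenvalue, not merely an infimum — one needs the variational problem of Theorem~\ref{Th_6} to be solved on the weakly closed bounded convex level set $\partial E^{B_{R_{0}}^{X}(0)}$, which is where the compactness of $L^{-1}$ and the Rellich embedding enter.
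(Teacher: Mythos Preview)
Your proposal is correct, and its computational core --- testing against $-\Delta u$, integrating by parts via $\nabla(|u|^{p-2}u)=(p-1)|u|^{p-2}\nabla u$ and $\nabla(|u|^{\frac{p-2}{2}}u)=\tfrac{p}{2}|u|^{\frac{p-2}{2}}\nabla u$, taking the infimum, and then applying H\"older with exponents $\tfrac{p}{p-2}$ and $\tfrac{p}{2}$ --- is exactly what the paper does. In the paper the proposition is simply a summary of the calculation in part~2(\textit{b}) that precedes it; there is no separate proof environment.

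Where you differ is in scope: you spend most of your effort situating $(f_{0},f_{1})$ inside the abstract framework of Theorem~\ref{Th_5}/Proposition~\ref{Pr_2}/Theorem~\ref{Th_8} and checking conditions~1--4 (Calder\'on--Zygmund regularity for condition~1, local monotonicity of $\tau\mapsto|\tau|^{p-2}\tau$ for condition~4, the coercivity bound $\langle f(Lu)-\lambda g(u),-\Delta u\rangle\ge(1-\lambda/\widetilde\lambda_{1}(p))\|\Delta u\|_{p}^{p}$ for condition~3, and attainment via Theorem~\ref{Th_6}). The paper does not carry out these verifications explicitly for this example; it treats them as instances of the general discussion already given in Section~4 and records only the identity and the H\"older estimate. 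Your added checks are sound and make the argument more self-contained, but they are not part of the paper's own proof of the proposition.
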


\label{R_3}One can approach at the mentioned above question on the relation
using also of the following equality 
\begin{equation*}
-\int \left\vert \Delta u\right\vert ^{p-2}\Delta u=\lambda \int \left\vert
u\right\vert ^{p-2}u\Longrightarrow
\end{equation*}%
as the operator $-\Delta $ is positive under the conditions of this section,
then we get 
\begin{equation*}
\left\Vert \Delta u\right\Vert _{p-1}=\lambda \left\Vert u\right\Vert
_{p-1}\Longrightarrow
\end{equation*}%
\begin{equation*}
\lambda =\inf \left\{ \frac{\left\Vert \Delta u\right\Vert _{p-1}}{%
\left\Vert u\right\Vert _{p-1}}\left\vert \ u\in W^{2,p}\cap
W_{0}^{1,p}\right. \right\} .
\end{equation*}

\subsubsection{Some remarks on the eigenvalues and bifurcation}

From above-mentioned results follows:

1. To seek the eigenvalues of the nonlinear continuous operator in the
Banach space is necessary to choose the other operator in such a way that
the order of nonlinearity will be an identical whit the order of
nonlinearity of the examined operator. If one uses the proposed approach
then is possible to find and the other eigenvalues of this operator.

2. To study the bifurcation of solutions, in the beginning, is necessary to
find eigenvalues of this operator relatively to the nonlinear operator that
has identical order of the nonlinearity with the operator from the main
part, moreover, the choosing another operator must take account of the
properties of the second part of the examined equation, e.g. if consider the
equation (\ref{2.1a}) usually assumed $f\succ g$, where $f$ is the main part
and $g$ is the second part of this equation, but in this case, the order of
nonlinearity $g$ must be greater than the order of nonlinearity $f$ (see,
e.g. Section 3), as in articles \cite{12, 16, 33}, etc.

\end{document}